\definecolor{darkgreen}{rgb}{0,0.5,0}
 \def\boldit#1{\textit{\textbf{#1}}}
\def \ssize{\scriptscriptstyle}      
\def\smallsq#1{\plot 0 0  0.#1 0  0.#1 0.#1  0 0.#1  0 0 /}
\def \ssq{$\smallsq2$}
\def \bul{$\ssize\bullet$}
\def\num#1{$\scriptscriptstyle\sf#1$}
\def\qed{\hfill$\square$}
\numberwithin{equation}{section}
\newtheorem{thm}[equation]{\sc Theorem}
\newtheorem{lem}[equation]{\sc Lemma}
\newtheorem{cor}[equation]{\sc Corollary}
\newtheorem{prop}[equation]{\sc Proposition}
\newtheoremstyle{notation}{3pt}{3pt}{}{}{\itshape}{:}{.5em}{\thmname{#1}}
\theoremstyle{notation}
\newtheorem{rem}{\it Remark}
\newtheorem{defin}{\it Definition}
\newtheorem{ex}{\it Example}
\def\type{\mbox{\rm type\,}}
\def\Cok{\mbox{\rm Cok}}
\def\Hom{\mbox{\rm Hom}}
\def\rad{\mbox{\rm rad}}
\def\soc{\mbox{\rm soc\,}}
\def\Ker{\mbox{\rm Ker\,}}
\def\incl{\mbox{\rm incl}}
\def\len{\mbox{\rm len\,}}
\def\Sub{\mbox{\rm Sub}}
\newcounter{boxsize}
\newcounter{boxlength}
\newcounter{tempcounter}
\newcommand{\smallentryformat}{\scriptstyle\sf}
\newcommand\smbox{\put(0,0){\line(1,0){\value{boxsize}}}%
  \put(\value{boxsize},0){\line(0,1){\value{boxsize}}}%
  \put(0,0){\line(0,1){\value{boxsize}}}%
  \put(0,\value{boxsize}){\line(1,0){\value{boxsize}}}}
\newcommand\numbox[1]{\put(0,0)\smbox%
  \put(0,0){\makebox(\value{boxsize},\value{boxsize})[c]{%
      $\smallentryformat#1$}}}
\newcommand\singlebox[1]{\raisebox{-.4ex}{\begin{picture}(4,0)\setcounter{boxsize}{3}%
    \put(0,0)\smbox%
    \put(0,0){\makebox(\value{boxsize},\value{boxsize})[c]{%
      $\scriptstyle\sf#1$}}\end{picture}}}
\newcommand\boxedentry[3]{
  \raisebox{-.4ex}{%
    \setcounter{boxlength}{#1}%
    \begin{picture}(\value{boxlength},3)%
      \put(0,0){\line(1,0){\value{boxlength}}}%
      \put(\value{boxlength},0){\line(0,1){\value{boxsize}}}%
      \put(0,0){\line(0,1){\value{boxsize}}}%
      \put(0,\value{boxsize}){\line(1,0){\value{boxlength}}}%
      \put(0,0){\makebox(\value{boxlength},\value{boxsize})[c]{$\scriptstyle#2$}}
      \put(\value{boxlength},1.5){\line(1,0){1}}%
      \put(\value{boxlength},0){\makebox(0,\value{boxsize})[l]{$\;\scriptstyle#3$}}%
    \end{picture}
  }%
  \hspace{1ex}
}
\def\sbullet{\makebox(0,0){$\scriptstyle\bullet$}}
\def\arr#1#2{\arrow <2mm> [0.25,0.75] from #1 to #2}
\def\ssize{\scriptscriptstyle}
\newcommand\boxes[2]{\ifthenelse{#2=3}{$\scriptstyle P_2^{#1}$}{%
                                       $\scriptstyle P_{#2}^{#1}$}}
\newcommand\Mone{\begin{picture}(11,15)
    \multiput(0,0)(0,3)5{\smbox}
    \multiput(4,3)(0,3)3{\smbox}
    \multiput(8,3)(0,3)2{\smbox}
    \put(1.5,10.5)\sbullet
    \put(9.5,7.5)\sbullet
  \end{picture}
}
\newcommand\MoneD{\begin{picture}(11,15)
    \multiput(0,0)(0,3)5{\smbox}
    \multiput(4,3)(0,3)3{\smbox}
    \multiput(8,3)(0,3)2{\smbox}
    \put(1.5,1.5)\sbullet
    \put(5.5,10.5)\sbullet
  \end{picture}
}
\newcommand\LRone{\begin{picture}(9,15)
    \multiput(0,0)(0,3)5{\smbox}
    \multiput(3,6)(0,3)3{\smbox}
    \multiput(6,9)(0,3)2{\smbox}
    \put(6,12){\numbox1}
    \put(3,9){\numbox1}
    \put(6,9){\numbox2}
    \put(3,6){\numbox2}
    \put(0,3){\numbox3}
    \put(0,0){\numbox4}
  \end{picture}
}
\newcommand\STone{\begin{picture}(9,15)
    \multiput(0,0)(0,3)5{\smbox}
    \multiput(3,6)(0,3)3{\smbox}
    \multiput(6,9)(0,3)2{\smbox}
    \put(6,12){\numbox2}
    \put(3,9){\numbox4}
    \put(6,9){\numbox1}
    \put(3,6){\numbox3}
    \put(0,3){\numbox2}
    \put(0,0){\numbox1}
  \end{picture}
}
\newcommand\STtwo{\begin{picture}(9,15)
    \multiput(0,0)(0,3)5{\smbox}
    \multiput(3,6)(0,3)3{\smbox}
    \multiput(6,9)(0,3)2{\smbox}
    \put(6,12){\numbox4}
    \put(3,9){\numbox3}
    \put(6,9){\numbox2}
    \put(3,6){\numbox1}
    \put(0,3){\numbox2}
    \put(0,0){\numbox1}
  \end{picture}
}
\newcommand\LRthree{\begin{picture}(9,15)
    \multiput(0,0)(0,3)5{\smbox}
    \multiput(3,6)(0,3)3{\smbox}
    \multiput(6,9)(0,3)2{\smbox}
    \put(6,12){\numbox1}
    \put(3,9){\numbox1}
    \put(6,9){\numbox2}
    \put(3,6){\numbox3}
    \put(0,3){\numbox2}
    \put(0,0){\numbox4}
  \end{picture}
}
\newcommand\Mtwo{\begin{picture}(10,15)
    \multiput(0,0)(0,3)5{\smbox}
    \multiput(3,6)(0,3)2{\smbox}
    \multiput(7,3)(0,3)3{\smbox}
    \multiput(1.5,10.5)(3,0)2{\sbullet}
    \put(1.5,10.5){\line(1,0)3}
    \put(8.5,7.5){\sbullet}
  \end{picture}
}
\newcommand\MtwoD{\begin{picture}(10,15)
    \multiput(0,0)(0,3)5{\smbox}
    \multiput(3,3)(0,3)2{\smbox}
    \multiput(7,3)(0,3)3{\smbox}
    \multiput(1.5,7.5)(3,0)2{\sbullet}
    \put(1.5,7.5){\line(1,0)3}
    \put(8.5,4.5){\sbullet}
  \end{picture}
}
\newcommand\Mthree{\begin{picture}(9,15)
    \multiput(0,0)(0,3)5{\smbox}
    \multiput(3,3)(0,3)3{\smbox}
    \multiput(6,6)(0,3)2{\smbox}
    \multiput(1.5,10.5)(3,0)2{\sbullet}
    \put(1.5,10.5){\line(1,0)3}
    \multiput(4.5,7.5)(3,0)2{\sbullet}
    \put(4.5,7.5){\line(1,0)3}
  \end{picture}
}
\newcommand\MthreeD{\begin{picture}(9,15)
    \multiput(0,0)(0,3)5{\smbox}
    \multiput(3,3)(0,3)3{\smbox}
    \multiput(6,3)(0,3)2{\smbox}
    \multiput(1.5,7.5)(3,0)3{\sbullet}
    \put(1.5,7.5){\line(1,0)6}
    \put(7.5,4.5){\sbullet}
  \end{picture}
}
\newcommand\LRoneD{\begin{picture}(9,15)
    \multiput(0,0)(0,3)5{\smbox}
    \multiput(3,6)(0,3)3{\smbox}
    \multiput(6,9)(0,3)2{\smbox}
    \put(6,12){\numbox1}
    \put(6,9){\numbox2}
    \put(3,6){\numbox3}
    \put(0,0){\numbox1}
  \end{picture}
}
\newcommand\SToneD{\begin{picture}(9,15)
    \multiput(0,0)(0,3)5{\smbox}
    \multiput(3,6)(0,3)3{\smbox}
    \multiput(6,9)(0,3)2{\smbox}
    \put(6,12){\numbox3}
    \put(6,9){\numbox2}
    \put(3,6){\numbox1}
    \put(0,0){\numbox1}
  \end{picture}
}
\newcommand\LRtwoD{\begin{picture}(9,15)
    \multiput(0,0)(0,3)5{\smbox}
    \multiput(3,6)(0,3)3{\smbox}
    \multiput(6,9)(0,3)2{\smbox}
    \put(6,12){\numbox1}
    \put(6,9){\numbox2}
    \put(3,6){\numbox1}
    \put(0,0){\numbox3}
  \end{picture}
}
\newcommand\STthreeD{\begin{picture}(9,15)
    \multiput(0,0)(0,3)5{\smbox}
    \multiput(3,6)(0,3)3{\smbox}
    \multiput(6,9)(0,3)2{\smbox}
    \put(6,12){\numbox3}
    \put(6,9){\numbox1}
    \put(3,6){\numbox2}
    \put(0,0){\numbox1}
  \end{picture}
}
\begin{document}
\thispagestyle{empty}
\color{black}
\phantom m\vspace{-2cm}

\bigskip\bigskip
\begin{center}
  {\large\bf The socle tableau as a dual version\\[.5ex] of the Littlewood-Richardson tableau}

  \bigskip \today
\end{center}

\smallskip

\begin{center}
Justyna Kosakowska and Markus Schmidmeier

\medskip

  {\it Dedicated to the memory of Professor Andrzej Skowro\'nski}


\bigskip \parbox{10cm}{\footnotesize{\bf Abstract:}
    Like the LR-tableau, a socle tableau is given as a skew diagram with certain
    entries.  Unlike in the LR-tableau, 
    the entries in the socle tableau are weakly increasing in each row,
    strictly increasing in each column and satisfy a modified lattice
    permutation property.
    In the study of embeddings of a subgroup in a finite abelian $p$-group,
    socle tableaux occur as isomorphism invariants, they are given
    by the socle series of the subgroup.  We show that each
    socle tableau can be realized by some embedding.
    Moreover, the socle tableau
    of an embedding and the LR-tableau of the
    dual embedding determine each other
     --- a correspondence which appears to be related to
      the tableau switching studied by Benkart, Sottile and Stroomer.
      We illustrate how the entries in the socle tableau position
      the object within the Auslander-Reiten quiver.  Like the LR-tableau,
      the socle tableau determines the irreducible component of the object
      in representation space.}

\medskip \parbox{10cm}{\footnotesize{\bf MSC 2010:}

    05E10, 
    20K27, 
  47A15,  
  16G20 
}

\medskip \parbox{10cm}{\footnotesize{\bf Key words:} 
subgroup embedding; invariant subspace; Littlewood-Richardson tableau }
\end{center}

\section{Introduction}

  \subsection{Combinatorics.}

  The Littlewood-Richardson (LR) coefficient occurs in a meaningful way in
  a variety of areas in algebra: for example as the structure constant
  of the product of Schur polynomials in the ring of symmetric functions;
  as the multiplicity of a given irreducible representation of the
  symmetric group in a decomposition of the tensor product of two others;
  or as the number of irreducible components of invariant subspace
  varieties \cite{fulton,ks-box,ks-poles,leeu,macd}.

  \medskip
  Combinatorially, the LR-coefficient $c_{\alpha,\gamma}^\beta$ can be
  computed as the number of LR-tableaux of shape $(\alpha,\beta,\gamma)$,
  where $\alpha,\beta,\gamma$ are given partitions.
  In this paper we introduce a new kind of tableau,
  which we call the \boldit{socle tableau}.
  Here, the entries in the tableau are weakly decreasing in each row,
  strictly decreasing in each column, and satisfy a third condition
  which corresponds to the lattice permutation property.

  \medskip
  The number of socle tableaux of shape $(\alpha,\beta,\gamma)$ is
  is equal to the number of LR-tableaux of the same shape.
  However to the knowledge of the authors,
  there is no ``natural'' one-to-one correspondence between
  LR-tableaux and socle tableaux of the same shape.

  \label{example-two}
  \medskip
  It is well-known that the number $c_{\alpha,\gamma}^\beta$ of
  LR-tableaux of shape $(\alpha,\beta,\gamma)$ equals the number
  $c_{\gamma,\alpha}^\beta$  of LR-tableaux of shape $(\gamma,\beta,\alpha)$.
  In Section~\ref{sec-lr-soc} we present an explicit combinatorial
  one-to-one correspondence between the set of socle tableaux
  of shape $(\alpha,\beta,\gamma)$ and the set of LR-tableaux of shape
  $(\gamma,\beta,\alpha)$.  For example,
  $$\Sigma_2:\;\raisebox{-5mm}\STtwo\quad\text{corresponds to}\quad
  \Gamma_2^*:\;\raisebox{-5mm}\LRtwoD.$$

 As a~consequence we can interpret the Littlewood-Richardson coefficient
 $c_{\alpha,\gamma}^\beta$ as the number
 of socle tableaux of shape $(\alpha,\beta,\gamma)$
 (Corollary \ref{cor-LR-coeff}).

  \subsection{Algebra.}
  Both LR- and socle
  tableaux occur as isomorphism invariants of embeddings of the form
  $(A\subset B)$ where $B$ is a finite length module over a
  discrete valuation ring $\Lambda$ and $A$ is a submodule of $B$.
  The shape parameters $\alpha,\beta,\gamma$
  are the partitions which describe the isomorphism types of the
  $\Lambda$-modules $A$, $B$, $B/A$.
  While the LR-tableau encodes the
  isomorphism types of the quotients $B/\rad^\ell A$, for integers $\ell\geq0$,
  given by the radical series of $A$,
  the socle tableau encodes the isomorphism types of the quotients
  $B/\soc^\ell A$ given by the socle series of $A$.  Hence the name.

  \medskip
  We present examples to illustrate that
  the LR-tableau of an embedding does not determine the socle
  tableau of that embedding, and conversely.

  \medskip
  Certain embeddings of $\Lambda$-modules, the \boldit{pickets},
  play a key role.
  For natural numbers $1\leq m$, $0\leq \ell\leq m$,
  denote by $P_\ell^m$ the embedding $(A\subset B)$ where
  $B =\Lambda/\rad^m\Lambda$ is the
  cyclic $\Lambda$-module of (composition)
  length $m$ and $A=\soc^\ell B$ the unique submodule of $B$ of length $\ell$.
  It turns out that either the socle tableau or the dual LR-tableau
  of an embedding $X$ determines and is determined by the Hom-matrix
  $H=(h_\ell^m)$, where $h_\ell^m$ measures the length of the homomorphism
  space $\Hom(P_\ell^m,X)$
  (Theorem~\ref{thm-lr-soc-hom}).
  In Subsection~\ref{sec-lr-soc} we will give an explicit
  combinatorial construction how to obtain the socle
  tableau from the corresponding dual LR-tableau, and conversely.

  \subsection{Contents of the sections.}

  \smallskip
  In Section~\ref{sec-notation} we define the socle tableau and discuss
  the modified lattice permutation property.
  We introduce embeddings of modules over a discrete valuation domain
  and the associated LR- and socle tableaux.
  Examples show that the LR-tableau of an embedding does not determine
  the socle tableau, and conversely.

  \smallskip
  In Section~\ref{sec-soc-tab} we verify that the socle tableau of
  an embedding does indeed satisfy the properties of a socle tableau.
  Our main result is the version of the Green-Klein Theorem
  (\cite{klein1,macd}) for
  socle tableaux:  For a~given socle tableau $\Sigma$ we explicitely
  construct an embedding with this socle tableau (Theorem~\ref{thm-kleinST}).

  \smallskip
  The Hom-matrix $(h_\ell^m)_{\ell,m}$ of an embedding $X$ encodes the
  lengths of the homomorphism spaces $\Hom(P_\ell^m,X)$ between pickets
  and the given embedding. 
  In Section~\ref{sec-soc-lr-hom} we show that
  socle tableau, hom-matrix and the LR-tableau of the dual embedding
  determine each other.

  \smallskip
  This correspondence between socle tableau and dual LR-tableau
  appears to be related to the tableau switching studied by
  Benkart, Sotille and Stroomer.  We give an example and state the
  conjecture.  In the remaining parts of Section~\ref{sec-applications}
  we describe how each entry in a certain row in the socle tableau
  specifies the position of the object in the Auslander-Reiten quiver;
  we also illustrate how the socle tableau, like the LR-tableau,
  determines the irreducible component of an object within the representation
  space.

  \section{Notation and Examples}
  \label{sec-notation}
  
\subsection{The LR-tableau and the socle tableau}

Let $\alpha$, $\beta$, $\gamma$ be partitions.  For LR- and socle
tableaux of shape $(\alpha,\beta,\gamma)$ to exist, there are two
necessary conditions.  First, 
$\gamma\leq\beta$ in the sense that the Young diagram for $\gamma$
(which has columns of length $\gamma_1,\gamma_2,\ldots$)
fits into the Young diagram for $\beta$ 
(that is, $\gamma_i\leq \beta_i$ holds for all $i$),
so that we can consider the
skew diagram $\beta\setminus\gamma$.  The second condition is
that $|\alpha|+|\gamma|=|\beta|$ so that we can fill the skew diagram
$\beta\setminus\gamma$ with $\alpha'_1$ entries 1, $\alpha'_2$ entries 2,
etc.  Here, $\alpha'$ is the transpose of the partition $\alpha$, so
$\alpha'_i$ is the length of the $i$-th row in the Young diagram for $\alpha$.

\begin{defin}\sloppypar
  A \boldit{Littlewood-Richardson (LR-) tableau}
  of shape $(\alpha,\beta,\gamma)$
  is a filling of the skew diagram $\beta\setminus\gamma$ with
  $\alpha'_1$ boxes \singlebox1, $\alpha_2'$ boxes \singlebox2 etc.\ such that
  \begin{itemize}
  \item[(LR-1)] in each row, the entries are weakly increasing,
  \item[(LR-2)] in each column, the entries are strictly increasing,
  \item[(LR-3)] (lattice permutation property) for each vertical line,
    and for each natural number $\ell$, there are at most as many
    entries $\ell+1$ on the right hand side of the line as there are
    entries $\ell$.
  \end{itemize}
\end{defin}

Socle tableaux are defined correspondingly:

\begin{defin}
  A \boldit{socle tableau} of shape $(\alpha,\beta,\gamma)$ is a filling
  of the skew diagram $\beta\setminus\gamma$ with $\alpha'_1$ boxes
  \singlebox1, $\alpha'_2$ boxes \singlebox2, etc.\ such that 
  \begin{itemize}
  \item[(ST-1)] in each row, the entries are weakly decreasing,
  \item[(ST-2)] in each column, the entries are strictly decreasing,
  \item[(ST-3)] for each vertical line, and each natural number $\ell$,
    there are at most as many entries $\ell+1$ on the left hand side
    of the line as there are entries $\ell$.
  \end{itemize}
\end{defin}

\begin{ex}
  There are two LR-tableaux and two socle tableaux of shape
  $(42, 532, 31)$ (for the notation, see below):
$$ \Gamma_1:\;\raisebox{-5mm}{\LRone},\qquad
\Gamma_3:\;\raisebox{-5mm}{\LRthree},\qquad
\Sigma_1:\;\raisebox{-5mm}{\STone}, \qquad
\Sigma_3:\;\raisebox{-5mm}{\STtwo}$$
\end{ex}

\smallskip
  In all diagrams we number rows from up down and columns from left to right.

\medskip
Let $\alpha$, $\beta$, $\gamma$ be partitions and put $\alpha_1=s$.
Formally, an LR-tableau $\Gamma$
can be given by an increasing
sequence $(\gamma^{(0)},\gamma^{(1)},\ldots,\gamma^{(s)})$ of partitions
where the Young diagram for $\gamma^{(i)}$ consists of all empty boxes and
boxes with entries at most $i$.  In particular, $\gamma^{(0)}=\gamma$ and
$\gamma^{(s)}=\beta$.

\medskip\sloppypar
Similarly, a socle tableau $\Sigma$ is given by a decreasing sequence
$(\sigma^{(0)},\sigma^{(1)},\ldots,\sigma^{(s)})$ of partitions where the
Young diagram for $\sigma^{(i)}$ consists of all empty boxes and boxes
with entries strictly greater than $i$.  In particular, $\sigma^{(0)}=\beta$ and
$\sigma^{(s)}=\gamma$.

\begin{lem}
  \label{lem-lpp-equiv}
  In the definition of the socle tableau, assuming conditions
  {\rm (ST-1)} and {\rm (ST-2)}, the property {\rm (ST-3)}
  is equivalent to each of the following
  conditions:
  \begin{itemize}
  \item[(ST-3$'$)] {\rm for each row, and each natural number $\ell$,
    there are at most as many entries $\ell+1$ in this row and underneath
    as there are entries $\ell$ strictly underneath the row.}
  \item[(ST-3$''$)] {\rm for each natural number $\ell$, 
    there exists a one-to-one map $\varphi_\ell$ from the set of boxes with entry $\ell+1$
    to the set of boxes with entry $\ell$ such that $\varphi_\ell(b)$
    is in the same column as $b$ provided this column contains a box with entry $\ell$,
    and in a column to the left of the column of $b$ otherwise.}
  \end{itemize}
\end{lem}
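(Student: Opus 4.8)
The plan is to prove the chain of equivalences (ST-3) $\Leftrightarrow$ (ST-3$'$) $\Leftrightarrow$ (ST-3$''$), working always under the standing assumptions (ST-1) and (ST-2). Throughout I will use the following bookkeeping: for a natural number $\ell$ and a vertical line with $c$ columns to its left, let $L_\ell(c)$ (resp.\ $R_\ell(c)$) denote the number of entries equal to $\ell$ strictly to the left (resp.\ right) of the line; similarly, for a horizontal line with $r$ rows above it, let $U_\ell(r)$ (resp.\ $D_\ell(r)$) denote the number of entries $\ell$ in the rows strictly above (resp.\ strictly below) the line. Condition (ST-3) says $L_{\ell+1}(c)\le L_\ell(c)$ for all $c,\ell$; condition (ST-3$'$) says that the number of entries $\ell+1$ in a given row together with those strictly below is at most $D_\ell(r)$, where $r$ is the number of rows above that row.

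First I would prove (ST-3) $\Leftrightarrow$ (ST-3$'$) by exploiting the monotonicity forced by (ST-1) and (ST-2). The key observation is that because rows are weakly decreasing and columns strictly decreasing, the boxes with entry $\ge \ell+1$ form, in each partition-difference sense, a left-justified ``staircase'' region: within the skew diagram $\beta\setminus\gamma$, in any fixed row all entries $\ge k$ occur in an initial segment of that row, and as one moves down the rows these initial segments can only grow. Concretely, using the partitions $\sigma^{(\ell)}$ (whose Young diagram carries exactly the boxes with entry $>\ell$), the count of entries $\ell+1$ weakly left of a vertical line equals a difference of column-lengths of $\sigma^{(\ell)}$ and $\sigma^{(\ell+1)}$, and the count of entries $\ell$ strictly below a horizontal line is a difference of row-lengths of the same two partitions. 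I would translate (ST-3) and (ST-3$'$) into inequalities between partial sums of $\sigma^{(\ell)}$ and $\sigma^{(\ell+1)}$ and check that they coincide; this is essentially the standard fact that ``$\mu\le\lambda$ as a containment of Young diagrams, read columnwise'' is equivalent to the same containment read rowwise, applied to the skew shape $\sigma^{(\ell)}\setminus\sigma^{(\ell+1)}$ being a horizontal strip. I expect this is where the only real content lies, so I would do it carefully, treating the vertical-line version and the horizontal-line version as two encodings of the statement ``$\sigma^{(\ell)}\setminus\sigma^{(\ell+1)}$ is a horizontal strip''.

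Next I would handle (ST-3$'$) $\Leftrightarrow$ (ST-3$''$), or more efficiently (ST-3$''$) $\Rightarrow$ (ST-3$'$) $\Rightarrow$ (ST-3$''$). The direction (ST-3$''$) $\Rightarrow$ (ST-3$'$) is immediate: given the injection $\varphi_\ell$, restrict it to the set of boxes with entry $\ell+1$ that lie in a fixed row or below it; by the defining property of $\varphi_\ell$ every such box is sent either to the same column (hence, since columns are strictly decreasing, to a box with entry $\ell$ that lies strictly below it, in particular strictly below the row) or to a column strictly to the left — and in that case I must argue the image box still lies strictly below the chosen row; this follows because in the column of $b$ there is no entry $\ell$, so by (ST-1)/(ST-2) and the staircase shape, the entry-$\ell$ boxes available in columns to the left sit in rows at least as low. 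For the converse (ST-3$'$) $\Rightarrow$ (ST-3$''$), I would construct $\varphi_\ell$ greedily from the bottom row upward: process rows of the skew diagram from the bottom, and for each entry-$\ell+1$ box $b$ encountered, match it to an unused entry-$\ell$ box in the same column if one exists (necessarily below $b$), otherwise to an unused entry-$\ell$ box in the nearest available column to the left; condition (ST-3$'$), applied row by row, is exactly the Hall-type count guaranteeing that the greedy choice never runs out of targets.

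The main obstacle I anticipate is the bookkeeping in the (ST-3$'$) $\Rightarrow$ (ST-3$''$) construction: one must show that the greedy algorithm's ``leftover'' entry-$\ell$ boxes are always positioned so that a later (higher) entry-$\ell+1$ box in a column without its own entry-$\ell$ can indeed be matched to a box strictly to its left, and that no column gets oversubscribed. I would organize this by an induction on the rows processed so far, maintaining the invariant that after processing rows $r, r+1, \dots$ (from the bottom), in every column the number of still-unmatched entry-$\ell$ boxes among those rows is at least the number of entry-$\ell+1$ boxes in those rows lying in that column or any column to its right — which is precisely a reformulation of (ST-3), already shown equivalent to (ST-3$'$). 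With that invariant in hand the matching step is forced and the proof closes. The monotonicity from (ST-1) and (ST-2) is doing all the heavy lifting, so I would state a small preliminary observation about the ``staircase'' shape of the entry-$\ge k$ region and the horizontal-strip property of $\sigma^{(\ell)}\setminus\sigma^{(\ell+1)}$, and reuse it in all three implications.
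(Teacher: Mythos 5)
Your overall architecture is fine, and the parts of your plan dealing with $\varphi_\ell$ are essentially the Hall-type greedy argument that the paper dismisses as clear (for the equivalence of (ST-3) and (ST-3$''$)); the wrinkle you flag in (ST-3$''$)$\Rightarrow$(ST-3$'$) — a box $b$ with entry $\ell+1$ lying in the chosen row itself and mapped to a column on the left — is handled by the extra observation that (ST-1) forbids an entry $\ell$ to the left of an entry $\ell+1$ in the same row, so the image is indeed strictly lower. The genuine gap is in the central step (ST-3)$\Leftrightarrow$(ST-3$'$), which you acknowledge carries the real content but then dispose of incorrectly. First, your bookkeeping mis-encodes (ST-3$'$): if the chosen row is the $(r+1)$-st, the right-hand side of (ST-3$'$) counts entries $\ell$ \emph{strictly below} that row, i.e.\ your $D_\ell(r+1)$, not $D_\ell(r)$; the one-row shift is exactly where the difficulty sits and cannot be absorbed. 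Second, the claimed reduction to the ``standard fact'' that a containment of Young diagrams read columnwise is the same as read rowwise, and your assertion that (ST-3) and (ST-3$'$) are two encodings of ``$\sigma^{(\ell)}\setminus\sigma^{(\ell+1)}$ is a horizontal strip,'' are wrong: the horizontal-strip property of every layer already follows from (ST-1) and (ST-2) alone and is strictly weaker than (ST-3). A concrete test case is the skew diagram with $\gamma=\emptyset$ and $\beta$ consisting of two columns of height $1$, filled with entries $2,1$ in the single row: (ST-1) and (ST-2) hold, both layers $\sigma^{(0)}\setminus\sigma^{(1)}$ and $\sigma^{(1)}\setminus\sigma^{(2)}$ are horizontal strips, your off-by-one version of (ST-3$'$) is satisfied, and yet (ST-3) and the true (ST-3$'$) both fail. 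The underlying reason is that (ST-3) and (ST-3$'$) compare entries $\ell$ with entries $\ell+1$, so they involve the three partitions $\sigma^{(\ell-1)},\sigma^{(\ell)},\sigma^{(\ell+1)}$, whereas your translation (and the ``standard fact'' you invoke) only sees two of them.

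By contrast, the paper proves this step directly. For (ST-3)$\Rightarrow$(ST-3$'$), given a row $R$ and $\ell$, it chooses the vertical line $L$ that separates the entries $>\ell$ in row $R$ from those $\le\ell$, and uses (ST-1), (ST-2) to identify the count of entries $\ell+1$ in row $R$ and below with the count to the left of $L$, and the count of entries $\ell$ strictly below $R$ with the count to the left of $L$; then (ST-3) at $L$ gives the inequality. For the converse, given a line $L$ it takes the first row containing an entry $\ell+1$ to the left of $L$ and corrects the counts by the number $u$ of entries $\ell+1$ of that row lying to the right of $L$. If you replace your appeal to the ``standard fact'' by an argument of this kind — or by a genuinely careful partial-sum computation that records in which rows ($\sigma^{(\ell)}_j$ resp.\ $\sigma^{(\ell-1)}_j$) the entries $\ell+1$ and $\ell$ of column $j$ sit, and respects the one-row shift — the rest of your plan goes through.
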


\begin{proof}
  It is clear that conditions (ST-3) and (ST-3$''$) are equivalent.
  Let $\Sigma$ be a~socle tableau. For a vertical line $L$ and a row $R$ write
  \begin{eqnarray*}
    L_\ell & = & \#\{\singlebox \ell \;\text{in $\Sigma$ on the left of $L$}\},\\
    R_\ell & = & \#\{\singlebox \ell \;\text{in $\Sigma$ strictly underneath $R$}\},\\
    R_\ell' & = & \#\{\singlebox \ell\;\text{in $\Sigma$ in row $R$ or underneath}\}.
  \end{eqnarray*}
      {\rm (ST-3)$\implies$(ST-3$'$):} Given a row $R$ and a natural number
      $\ell$, let $L$ be the vertical line through $\Sigma$ which
      separates the entries greater than $\ell$ in row $R$
        from those less than or equal to $\ell$.
      Then
      $R'_{\ell+1}=L_{\ell+1}\leq L_\ell=R_\ell$.

      \medskip
          {\rm (ST-3$'$)$\implies$(ST-3):} Given a vertical line $L$
          and a natural number $\ell$, let $R$ be the first row which contains
          an entry $\ell+1$ on the left of $L$.  (If there is no such
          entry then there is nothing to show.)  Let $u$ be the number
          of entries $\ell+1$ in row $R$ on the right of $L$.  Then
          $L_{\ell+1}=R'_{\ell+1}-u\leq R_\ell-u\leq L_\ell$ where the last
          inequality holds since there are at most $u$ entries $\ell$
          on the right of $L$ and underneath row $R$.

\end{proof}

  \subsection{$\Lambda$-modules and embeddings}

Let $\Lambda$ be a~(commutative) discrete valuation ring
with maximal ideal generator $p$
and radical factor field $k=\Lambda/(p)$.
In this paper, we assume that all $\Lambda$-modules
have finite length.
Examples of discrete valuation rings
include the localization $\Lambda=\mathbb Z_{(p)}$ 
of the ring of integers at the prime ideal $(p)$,
then $\Lambda$-modules are the finite
abelian $p$-groups; and the power series ring $\Lambda=k[[T]]$ 
with coefficients in a field $k$, then $\Lambda$-modules are the finite
dimensional modules over the polynomial ring $k[T]$ which are annihilated
by some power of $T$.

\medskip
It is well known
there is a one-to-one correspondence between the set of
isomorphism classes of $\Lambda$-modules and the set of partitions:
For a partition $\alpha=(\alpha_1,\ldots,\alpha_n)$ where
$\alpha_1\geq\cdots\geq\alpha_n\geq1$ are natural numbers,
we denote by $N_\alpha$ the $\Lambda$-module 
$$N_\alpha = \Lambda/(p^{\alpha_1})\oplus \cdots \oplus \Lambda/(p^{\alpha_n})$$
and write $\alpha=\type(A)$ if $A\cong N_\alpha$.
  We denote by $\len A$ the (composition)
  length of the $\Lambda$-module $A$; in particular, the length
  of $N_\alpha$ as a $\Lambda$-module equals
  the length $|\alpha|=\alpha_1+\cdots+\alpha_n$
  of $\alpha$ as a partition.

\medskip
An \boldit{embedding} 
$(A\subset B)$ consists of a $\Lambda$-module $B$ and a submodule $A$ of $B$.
By  $\mathcal{S}=\mathcal S(\Lambda)$ we denote the category of all embeddings,
with homomorphisms given by commutative diagrams.

\medskip
  For a $\Lambda$-module $B$, the multiplication by
  the radical generator $p\in\Lambda$
  gives rise to two maps $\Sub_B
    \to \Sub_B$
      where $\Sub_B$
        is the set of $\Lambda$-submodules
  of $B$.

  \begin{eqnarray*}
    p_B: & \Sub_B \to \Sub_B, & A\mapsto \{pa:a\in A\}\\
    p_B^{-1}: & \Sub_B \to \Sub_B, & A\mapsto \{b\in B:pb\in A\}
  \end{eqnarray*}
  In particular, if $A\subset B$ is an embedding, then the layers
  of the radical series and the socle series of $A$ are the submodules
  of $B$ given by
  $$\rad^mA = p_B^m(A),\quad \soc^\ell A=p_A^{-\ell}(0).$$

  \medskip
  For two $\Lambda$-modules $B,C$,
  and two embeddings $X,Y\in\mathcal S(\Lambda)$,
  the homomorphism groups are $\Lambda$-modules and we write
  $$\hom_\Lambda(B,C)=\len\Hom_\Lambda(B,C),\quad
  \hom_{\mathcal S}(X,Y)=\len\Hom_{\mathcal S}(X,Y).$$

\subsection{Tableaux given by an embedding}
\label{sub-embedding}
Let $(A\subset B)$ be an embedding, and denote by 
$\alpha$, $\beta$, and $\gamma$ the partition type of the $\Lambda$-module
$A$, $B$, and $B/A$, respectively.  Let $s=\alpha_1$, so $p^sA=0$.

\medskip
The radical sequence for $A$,
$$0=\rad^sA\subset \rad^{s-1}A\subset\cdots\subset\rad A\subset A,$$
gives rise to a sequence of epimorphisms,
$$B=B/\rad^sA\to B/\rad^{s-1}A\to\cdots\to B/\rad A\to B/A,$$
and hence to an increasing sequence of partitions,
$$\beta=\gamma^{(s)}\geq\gamma^{(s-1)}\geq\cdots
\geq\gamma^{(1)}\geq\gamma^{(0)} =\gamma,$$
where $\gamma^{(i)}$ is the partition type of $B/\rad^iA$.
The corresponding tableau $\Gamma=(\gamma^{(i)})$ is an LR-tableau,
according to the Theorem by Green and Klein, see Section \ref{sec-soc-tab}. We call $\Gamma$ \boldit{the LR-tableau of the embedding} $(A\subseteq B)$.

\medskip
Dually, the socle sequence for $A$,
$$0\subset \soc A\subset\cdots\subset \soc^{s-1}A\subset\soc^sA=A,$$
gives rise to a sequence of epimorphisms,
$$B\to B/\soc A\to \cdots\to B/\soc^{s-1}A\to B/\soc^sA=B/A,$$
and hence to a decreasing sequence of partitions,
$$\beta =\sigma^{(0)} \geq\sigma^{(1)}\geq\cdots
\geq\sigma^{(s-1)}\geq\sigma^{(s)}=\gamma,$$
where $\sigma^{(i)}$ is the partition type of $B/\soc^iA$.
We will see in the next section that $\Sigma=(\sigma^{(i)})$
is a socle tableau.
We call $\Sigma$ \boldit{the socle tableau of the embedding} $(A\subseteq B)$.

\bigskip
\subsection{Examples.}

Let $m$ be a natural number and $0\leq \ell\leq m$.  The picket
$P_\ell^m$ is given by the embedding $(\soc^\ell P^m\subset P^m)$
or $(\rad^{m-\ell}P^m\subset P^m)$ where $P^m=\Lambda/(p^m)$.
We picture the picket as a column of $m$ boxes (representing the
$\Lambda$-module $P^m$) and put a dot
in the $(m-\ell)$-th box from the top to represent the generator
of the submodule:  The top box stands for the element $1+(p^m)$ in $P^m$,
the second box for $p+(p^m)$ etc.
The partition type for $P^m_\ell$ is easily computed as
$\beta=(\len P_m)
= (m)$, $\alpha=\len \soc^\ell P^m=(\ell)$,
$\gamma=\len P_m/\soc^\ell P^m=(m-\ell)$, or $\gamma=()$ if $\ell=m$.

\medskip
The modules in the radical series of the submodule have length
$\len \rad^i(\soc^\ell P^m)=\ell-i$ for $0\leq i\leq \ell$
hence the partitions defining the LR-tableau are $\gamma^{(i)}=
\type P^m/\rad^i(\soc^\ell P^m)=(m-\ell+i)$.
Similarly, the modules in the socle series of the submodule have
length $\soc^i(\soc^\ell P^m)=i$ for $0\leq i\leq \ell$, so the partitions
in the socle tableau are $\sigma^{(i)}=\type P^m/\soc^i(\soc^\ell P^m)=(m-i)$.

\medskip
Here we picture the embedding $P_4^5$ together with its LR-tableau
$\Gamma_4^5$ and its socle tableau $\Sigma_4^5$.
$$P_4^5: \raisebox{-5mm}{\begin{picture}(3,15)
    \multiput(0,0)(0,3)5{\smbox}
    \put(1.5,10.5)\sbullet
\end{picture}}\;,
\qquad
\Gamma_4^5: \raisebox{-5mm}{\begin{picture}(3,15)
    \multiput(0,0)(0,3)5{\smbox}
    \put(0,9){\numbox1}
    \put(0,6){\numbox2}
    \put(0,3){\numbox3}
    \put(0,0){\numbox4}
\end{picture}}\;,
\qquad\Sigma_4^5: \raisebox{-5mm}{\begin{picture}(3,15)
    \multiput(0,0)(0,3)5{\smbox}
    \put(0,9){\numbox4}
    \put(0,6){\numbox3}
    \put(0,3){\numbox2}
    \put(0,0){\numbox1}
\end{picture}}
$$

Here are some more examples.
$M_1$ is the direct sum $P_4^5\oplus P_0^3\oplus P_2^2$.  
In $M_2$ and $M_3$, the ambient space is $P^5\oplus P^3\oplus P^2$,
say generated by $b$, $b'$, and $b''$; in $M_2$, the subspace
generators are $pb+b''$ and $pb'$; in $M_3$, the submodule is generated
by $pb+b'$ and $pb'+pb''$.

\medskip
Each of the embeddings $M_i$ has partition type
$\alpha=(42)$, $\beta=(532)$, $\gamma=(31)$.
$$M_1:\raisebox{-5mm}{\Mone},\qquad
M_2:\raisebox{-5mm}{\Mtwo}, \qquad
M_3:\raisebox{-5mm}{\Mthree}.$$
The LR- and socle tableaux are as follows.
$$\Sigma_1:\raisebox{-5mm}{\STone}, \qquad
\Gamma_1=\Gamma_2:\raisebox{-5mm}{\LRone},\qquad
\Sigma_2=\Sigma_3:\raisebox{-5mm}{\STtwo},\qquad
\Gamma_3:\raisebox{-5mm}{\LRthree}$$

We observe that embeddings with the same LR-tableau may have different
socle tableaux, and conversely.

\subsection{Duality}

\begin{defin}
  Let $E$ be the injective envelope of the simple $\Lambda$-module
  $\Lambda/(p)$.  We write $DB=\Hom_\Lambda(B,E)$ for the dual of the
  $\Lambda$-module $B$.  
  For an embedding $X=(A\subset B)\in \mathcal S(\Lambda)$, the
  \boldit{dual embedding} $X^*$ is given by
  the inclusion $\Hom_\Lambda(\pi,E): DC\to DB$
  where $C=B/A$ and $\pi:B\to C$ is the canonical map.

  Note that if the embedding $(A\subset B)$ has partition type
  $(\alpha,\beta,\gamma)$, then the dual embedding has type
  $(\gamma,\beta,\alpha)$.  We define
  the \boldit{dual LR-tableau} of the embedding $(A\subset B)$ as the
  LR-tableau of the dual of the embedding, $\Gamma^*_X= \Gamma_{X^*}$,
  it is a tableau of shape $(\gamma,\beta,\alpha)$.
\end{defin}

\begin{ex} We present the dual embeddings for $M_1$, $M_2$, $M_3$
from the previous subsection.  
$$M_1^*:\raisebox{-5mm}{\MoneD},\qquad
M_2^*:\raisebox{-5mm}{\MtwoD}, \qquad
M_3^*:\raisebox{-5mm}{\MthreeD}$$
They have the following LR- and socle tableaux.
$$\Gamma_1^*:\raisebox{-5mm}{\LRoneD}, \qquad
\Sigma_1^*=\Sigma_2^*:\raisebox{-5mm}{\SToneD},\qquad
\Gamma_2^*=\Gamma_3^*:\raisebox{-5mm}{\LRtwoD},\qquad
\Sigma_3^*:\raisebox{-5mm}{\STthreeD}$$

We have seen in the previous subsection
that the socle tableau of an embedding does not determine
its LR-tableau or conversely.
However, the socle tableau of an embedding
and the dual LR-tableau do determine each other.
In Section~\ref{sec-lr-soc} we describe how to obtain one tableau from
the other combinatorially.
\end{ex}

\section{The Green-Klein Theorem revisited}
\label{sec-soc-tab}

This section is inspired and motivated by the Theorem of Green and Klein,
which we present in the version for $\Lambda$-modules.

\begin{thm}[\protect{\cite{klein1,macd}}]
  Given partitions $\alpha$, $\beta$, $\gamma$,
  there exists a short exact sequence of $\Lambda$-modules
  $$
    0\longrightarrow A \longrightarrow B\longrightarrow C \longrightarrow 0
  $$
  where $A$, $B$, $C$ have partition type $\alpha$, $\beta$, $\gamma$,
  respectively, if and only if there exists an LR-tableau of
  shape $(\alpha,\beta,\gamma)$.\qed
\end{thm}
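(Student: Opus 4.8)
The statement is the classical Green--Klein theorem (\cite{klein1,macd}); the plan is to reconstruct it by treating the two implications separately.

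\emph{From an extension to an LR-tableau.} Given the short exact sequence, realize it as an embedding $(A\subseteq B)$ with $C=B/A$, and form, as in Subsection~\ref{sub-embedding}, the chain of partitions $\gamma^{(i)}=\type(B/\rad^iA)$ coming from the radical series of $A$, so that $\gamma^{(0)}=\gamma$ and $\gamma^{(s)}=\beta$ with $s=\alpha_1$; fill each box of $\gamma^{(i)}\setminus\gamma^{(i-1)}$ with the entry $i$. The content is correct since $\len(\rad^{i-1}A/\rad^iA)=\alpha'_i$. Because each layer $\rad^{i-1}A/\rad^iA$ is semisimple (it is killed by $p$), passing from $B/\rad^iA$ to the quotient $B/\rad^{i-1}A$ lowers each part of the partition type by at most one, so the boxes of a single level form a horizontal strip (at most one box per column); together with the nesting $\gamma^{(0)}\subseteq\cdots\subseteq\gamma^{(s)}$ this gives (LR-1) and (LR-2). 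The substantial point is the lattice permutation property (LR-3). For a self-contained argument I would use that the filtration stems from a \emph{single} submodule: multiplication by $p$ induces compatible maps $\bar p_i\colon B/\rad^{i-1}A\to B/\rad^iA$ with image $\rad(B/\rad^iA)$, and tracking how $\bar p_i$ carries the new boxes of level $i-1$ onto those of level $i$ yields, for each $\ell$, an injection from the entry-$(\ell+1)$ boxes to the entry-$\ell$ boxes matching each box either in its own column or in one strictly to its left --- which is exactly (LR-3). Alternatively one simply appeals to the combinatorial Littlewood--Richardson rule. This verification of (LR-3) is the step I expect to be the main obstacle, since it is where the genuine combinatorics of LR-tableaux enters.

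\emph{From an LR-tableau to an extension.} Given $\Gamma=(\gamma^{(0)},\dots,\gamma^{(s)})$, I would argue by induction on $s$ --- the analogue for LR-tableaux of the construction carried out later for socle tableaux (Theorem~\ref{thm-kleinST}). Deleting the $\alpha'_s$ boxes with entry $s$ leaves an LR-tableau of shape $(\hat\alpha,\gamma^{(s-1)},\gamma)$, where $\hat\alpha$ is $\alpha$ with its last column removed; by induction one obtains a short exact sequence $0\to N_{\hat\alpha}\to N_{\gamma^{(s-1)}}\to N_\gamma\to 0$. One then enlarges $N_{\gamma^{(s-1)}}$ to a module of type $\beta$ by glueing on a semisimple layer in the position prescribed by the strip $\beta\setminus\gamma^{(s-1)}$, simultaneously enlarging the submodule from type $\hat\alpha$ to type $\alpha$; condition (LR-3) for $\Gamma$ is precisely what makes this realizable inside an honest $\Lambda$-module of type $\beta$. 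As an alternative that avoids the bookkeeping, one can invoke Hall's theorem: the Hall polynomial $g^{\beta}_{\alpha\gamma}$ has non-negative coefficients with leading coefficient equal to the number of LR-tableaux of shape $(\alpha,\beta,\gamma)$, so the existence of an LR-tableau forces $g^{\beta}_{\alpha\gamma}(q)>0$ for every prime power $q$, whence a submodule of $N_\beta$ of type $\alpha$ with quotient of type $\gamma$ exists over any discrete valuation ring with finite residue field, the general case following by flat base change. I would present the explicit construction as the main argument, since it matches the constructive spirit of the rest of the paper, and relegate the Hall-polynomial argument to a remark.
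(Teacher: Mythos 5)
The paper does not actually prove this statement: it is quoted as the classical Green--Klein theorem, stated with a citation to \cite{klein1,macd} and an immediate qed box, and the only proofs of this kind that the paper carries out are for the socle-tableau analogue in Section~\ref{sec-soc-tab}. So there is no internal proof to match your argument against; judged on its own terms, your outline follows the standard literature route (Macdonald II.3 for necessity, Klein for sufficiency) and runs parallel to the paper's treatment of the socle case, but at both decisive points it stops at an assertion rather than an argument.

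Concretely: (i) for the forward direction, the verification of (LR-3) is exactly the step that needs a lemma, and your sketch of ``tracking how $\bar p_i$ carries the new boxes'' is not yet one. Note in particular that for the radical filtration multiplication by $p$ induces \emph{epimorphisms} $\rad^{i-1}A/\rad^{i}A\to\rad^{i}A/\rad^{i+1}A$, not monomorphisms, so the injection you want from entry-$(\ell+1)$ boxes to entry-$\ell$ boxes does not fall out of that map alone; the correct bookkeeping (as in \cite[II.3.4]{macd}, and as the paper does for the socle case via Lemma~\ref{lem-lpp-equiv}, Corollary~\ref{cor-mono} and Lemma~\ref{lemma-num-entries}) intersects the filtration layers with the socle, respectively radical, series of $B$ and compares lengths. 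Your fallback ``appeal to the combinatorial Littlewood--Richardson rule'' is not a substitute, since that rule concerns products of Schur functions and says nothing by itself about the tableau attached to a particular submodule. (ii) For the converse, the sentence ``(LR-3) is precisely what makes this realizable'' is the entire content of Klein's theorem; in the paper's socle analogue this is where all the work lies (Lemma~\ref{lem-epi-sequence} together with the explicit automorphisms $h^{(\ell)}$ constructed case by case in the proof of Theorem~\ref{thm-kleinST}), and an LR version of your inductive glueing would need a comparably explicit construction, which you have not supplied. The Hall-polynomial alternative has its own gaps: positivity of $g^{\beta}_{\alpha\gamma}(q)$ at \emph{every} prime power requires non-negativity of the coefficients (or some other argument), which is itself non-trivial, and the reduction to arbitrary discrete valuation rings is not a flat base change --- $\mathbb Z_{(p)}$ and $k[[T]]$ are not related by base change, so independence of the residue field must be argued separately. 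Since the paper simply cites \cite{klein1,macd}, doing the same is entirely adequate here; a self-contained proof would require filling in the two steps above.
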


  Our aim is to show the corresponding result for socle tableaux.

\begin{thm}
  Given partitions $\alpha$, $\beta$, $\gamma$,
  there exists a short exact sequence of $\Lambda$-modules
  \begin{equation}\label{eq-ses}
    0\longrightarrow A \longrightarrow B\longrightarrow C \longrightarrow 0
  \end{equation}
  where $A$, $B$, $C$ have partition type $\alpha$, $\beta$, $\gamma$,
  respectively, if and only if there exists a socle tableau of
  shape $(\alpha,\beta,\gamma)$.
\end{thm}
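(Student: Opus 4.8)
The plan is to reduce the socle-tableau statement to the classical Green--Klein theorem by exploiting duality. The key observation is that for an embedding $X=(A\subset B)$ of partition type $(\alpha,\beta,\gamma)$, the dual embedding $X^*$ has type $(\gamma,\beta,\alpha)$, and —as the introduction announces— the socle tableau of $X$ and the LR-tableau of $X^*$ determine each other. So on the level of combinatorics, I would first establish a purely combinatorial bijection between socle tableaux of shape $(\alpha,\beta,\gamma)$ and LR-tableaux of shape $(\gamma,\beta,\alpha)$ (this is the content promised for Section~\ref{sec-lr-soc}, but the mere \emph{existence} of one iff the other exists is all that is needed here, and that follows from the combinatorial switching construction). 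Granting that, the proof of the theorem is short.

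For the forward direction: suppose a short exact sequence \eqref{eq-ses} exists with types $\alpha,\beta,\gamma$. Applying the duality $D=\Hom_\Lambda(-,E)$ gives a short exact sequence $0\to DC\to DB\to DA\to 0$ with types $\gamma,\beta,\alpha$ (using that $D$ is exact and type-preserving on finite-length $\Lambda$-modules, i.e. $DN_\lambda\cong N_\lambda$). By the Green--Klein theorem there is then an LR-tableau of shape $(\gamma,\beta,\alpha)$, and by the combinatorial bijection above there is a socle tableau of shape $(\alpha,\beta,\gamma)$. For the converse: given a socle tableau of shape $(\alpha,\beta,\gamma)$, the bijection produces an LR-tableau of shape $(\gamma,\beta,\alpha)$; Green--Klein yields a short exact sequence $0\to A'\to B'\to C'\to 0$ with types $\gamma,\beta,\alpha$; dualizing again gives a short exact sequence with types $\alpha,\beta,\gamma$, as required.

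Alternatively —and this is the route I would actually prefer, since it also prepares Theorem~\ref{thm-kleinST}— I would give a direct construction: starting from a socle tableau $\Sigma$ of shape $(\alpha,\beta,\gamma)$, build an explicit embedding $(A\subset B)$ realizing it, by reading the columns of $\beta\setminus\gamma$ and, for each box with entry $i$ in a column of height $h$ (counted from the bottom of $\beta$), contributing a picket-type generator so that the socle series of $A$ produces exactly the prescribed partitions $\sigma^{(i)}$. The construction mirrors Klein's original one but uses the socle filtration instead of the radical filtration; condition (ST-3$''$) from Lemma~\ref{lem-lpp-equiv} —the one-to-one map $\varphi_\ell$ matching entries $\ell+1$ to entries $\ell$ in the same or an earlier column— is exactly what guarantees that the chosen generators can be arranged coherently inside a single module $B$ of type $\beta$.

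The main obstacle is the converse/realization step. Checking that the combinatorially defined $\Sigma$ actually \emph{is} the socle tableau of the constructed embedding requires verifying that $\type(B/\soc^i A)=\sigma^{(i)}$ for all $i$, which amounts to a careful bookkeeping of how $\soc^i A$ sits inside $B$ — and here the delicate point is that boxes of entry $\ell+1$ routed by $\varphi_\ell$ to a column strictly to the left interact with generators of differing lengths, so one must check that no unintended collapsing or growth of the socle quotients occurs. If instead one takes the duality route, the only nonroutine point is the combinatorial bijection between socle tableaux of shape $(\alpha,\beta,\gamma)$ and LR-tableaux of shape $(\gamma,\beta,\alpha)$; I would either cite the tableau-switching machinery of Benkart--Sottile--Stroomer or, to keep the paper self-contained, prove the existence half directly by induction on $|\alpha|$, peeling off the largest entry of the socle tableau and matching it against the corresponding step of an LR-tableau.
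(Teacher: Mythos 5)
Your primary route has a genuine circularity. You reduce the statement to the classical Green--Klein theorem via duality plus ``a purely combinatorial bijection between socle tableaux of shape $(\alpha,\beta,\gamma)$ and LR-tableaux of shape $(\gamma,\beta,\alpha)$,'' but no such bijection is available at this point: in the paper that correspondence (Section~\ref{sec-soc-lr-hom}, Corollary~\ref{cor-LR-coeff}) is \emph{derived from} the algebraic correspondence $\Sigma_X\leftrightarrow\Gamma_{DX}$ for embeddings, and it only becomes a bijection between all tableaux of the given shapes once one knows that every socle tableau is realized by an embedding --- i.e.\ Theorem~\ref{thm-kleinST}, which is exactly the realization half of the statement you are trying to prove. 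The purely combinatorial substitute you gesture at, tableau switching \`a la Benkart--Sottile--Stroomer, cannot be cited either: socle tableaux are new objects here, and the paper only \emph{conjectures} that switching converts the socle tableau into the dual LR-tableau. Your fallback (``induction on $|\alpha|$, peeling off the largest entry'') is precisely the nontrivial combinatorial content and is not carried out. Note also that under this route even the forward direction rests on the unproven bijection; the paper instead proves directly that the socle sequence of any embedding satisfies (ST-1)--(ST-3), the delicate point being (ST-3), handled via the multiplication-by-$p$ monomorphism on socle layers intersected with radical layers (Corollary~\ref{cor-mono} and Lemma~\ref{lemma-num-entries}) --- a verification your proposal never supplies.

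Your alternative route is the paper's actual strategy in outline, but the key mechanism is missing. It is not enough to ``contribute a picket-type generator'' per column: a direct sum of pickets does not in general have the prescribed socle quotients (already $\Sigma_2$ in the paper's running example needs a subspace generator of the form $pb+b''$ coupling two columns). The paper resolves exactly the bookkeeping you flag as the obstacle by Lemma~\ref{lem-epi-sequence}: one builds a chain of epimorphisms $B=C_0\to C_1\to\cdots\to C_s=C$ with semisimple kernels subject to the condition $\soc(\Ker f_{\ell+1}f_\ell)=\Ker f_\ell$, which forces $B/\soc^\ell A\cong C_\ell$ and hence $\type(B/\soc^\ell A)=\sigma^{(\ell)}$; the condition is then arranged column by column, using $\varphi_\ell$ from (ST-3$''$) to pair a column containing an entry $\ell+1$ but no $\ell$ with a column to its left and twisting by a unipotent automorphism built from the inclusion $P^v\to P^u$ (multiplication by $p^{u-v}$). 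Without this lemma, or an equivalent device, your construction stops exactly at the point you yourself identify as delicate, so the realization direction remains unproved.
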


In the first subsection we show that the socle tableau of the embedding
$(A\subset B)$ 
as defined in Subsection~\ref{sub-embedding}
does satisfy conditions (ST-1) through (ST-3). 
In the second subsection
we show that any such tableau can be realized as the socle tableau
of an embedding.

  \subsection{Conditions (ST-1) to (ST-3) are satisfied.}

  Let $\Sigma$ be the socle tableau of an embedding $(A\subset B)$,
  so there are partitions $\alpha=\type A$, $\beta=\type B$, $\gamma=\type B/A$
  such that $\Sigma$ is the skew diagram of shape $\beta\setminus\gamma$.
    If the socle tableau $\Sigma$ is given by partition sequence
    $(\sigma^{(i)})$ where $\sigma^{(i)}=\type B/\soc^iA$,
    then for each natural number $\ell$, the entries
    \singlebox\ell\ occur in the skew diagram
    $\sigma^{(\ell-1)}\setminus\sigma^{(\ell)}$.
  
  \medskip
  We adapt to our situation the proof of the analogous fact for LR-tableaux,
  see \cite[II.3.4]{macd}.  Let $s=\alpha_1$.

  \medskip
  Obviously we have $\sigma^{(s)}=\gamma$ and 
  $\sigma^{(0)}=\beta$, because 
  $\soc^sA=A$ and $\soc^0A=0$. Moreover, there is a~short exact sequence
  $$
  0\to \soc^{\ell}A/\soc^{\ell-1}A \to B/\soc^{\ell-1} A \to B/\soc^{\ell}A \to 0
  $$
  and therefore $\sigma^{(\ell)}\subseteq\sigma^{(\ell-1)}$ for all $\ell$,
  see \cite[II.3.1]{macd}. This shows (ST-1) and the fact that
  there are $\alpha'_\ell=\len(\soc^\ell A/\soc^{\ell-1}A)$ boxes
  \singlebox\ell\ in $\Sigma$.

  \smallskip
  Since
  $\soc^{\ell} A/\soc^{\ell-1}A$ is a~semisimple module,
  $\sigma^{(\ell-1)}\setminus\sigma^{(\ell)}$ is a~horizontal strip
  see \cite[II.3.3]{macd}. Thus condition (ST-2) is also satisfied.

  \medskip
  It remains to show property (ST-3), or equivalently, (ST-3$'$)
  (Lemma~\ref{lem-lpp-equiv}).

  \begin{lem}
    For $\ell\geq2$, the map
    $$\mu_p:\quad \frac{\soc^\ell A}{\soc^{\ell-1}A}\longrightarrow
    \frac{\soc^{\ell-1}A}{\soc^{\ell-2}A}$$
    given by multiplication by $p$ is a monomorphism.
  \end{lem}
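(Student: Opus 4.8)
The plan is to show that the multiplication-by-$p$ map $\mu_p$ is injective by identifying its kernel with a submodule that must be zero. Recall that $\soc^{\ell-1}A = p_A^{-(\ell-1)}(0) = \{a\in A : p^{\ell-1}a=0\}$, and similarly for the other socle layers. An element $\bar a$ in $\soc^\ell A/\soc^{\ell-1}A$ lies in $\ker\mu_p$ precisely when $pa\in\soc^{\ell-2}A$, i.e.\ $p^{\ell-2}(pa)=0$, i.e.\ $p^{\ell-1}a=0$, i.e.\ $a\in\soc^{\ell-1}A$. So $\bar a=0$ in the quotient. Hence $\ker\mu_p=0$ and $\mu_p$ is a monomorphism. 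This is essentially a one-line computation once the socle layers are written out explicitly as annihilators; the content is just that $p\cdot\soc^\ell A\subseteq\soc^{\ell-1}A$ and that the preimage of $\soc^{\ell-2}A$ under multiplication by $p$ is exactly $\soc^{\ell-1}A$.

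First I would fix notation: set $\soc^jA=\{a\in A: p^ja=0\}$ for all $j\ge0$, so that $\soc^0A=0$ and the chain $0\subseteq\soc A\subseteq\cdots\subseteq\soc^sA=A$ is the socle series. Then I would check that $\mu_p$ is well-defined, which amounts to the inclusion $p\cdot\soc^\ell A\subseteq\soc^{\ell-1}A$: if $p^\ell a=0$ then $p^{\ell-1}(pa)=0$. Next I would compute the kernel exactly as above, concluding $\ker\mu_p=0$.

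I expect no real obstacle here; the only point requiring a small amount of care is bookkeeping with the indices of the socle powers (the shift by one versus two), and making sure the case $\ell=2$ is covered, where $\soc^{\ell-2}A=\soc^0A=0$ and the statement reduces to: $p^2a=0$ and $pa=0$ together with $a\notin\soc A$ is impossible — which is immediate since $pa=0$ already forces $a\in\soc A$. The reason this lemma matters for the larger argument is that the monomorphisms $\mu_p$, as $\ell$ ranges, will let one compare the horizontal strips $\sigma^{(\ell-1)}\setminus\sigma^{(\ell)}$ in consecutive degrees and thereby deduce the counting inequality (ST-3$'$); but that deduction is the job of the next step, not of this lemma.
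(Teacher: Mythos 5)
Your argument is correct and coincides with the paper's own proof: both use the annihilator description $\soc^jA=p_A^{-j}(0)$ to check that $\mu_p$ is well defined and then observe that $pa\in\soc^{\ell-2}A$ forces $a\in\soc^{\ell-1}A$, so the kernel vanishes. Nothing further is needed.
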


  \begin{proof}
    From the description of the socle as $\soc^\ell A=p_A^{-\ell}(0)$,
    we obtain for $a\in\soc^\ell A$
    that $pa\in \soc^{\ell-1}A$, hence the map $\mu_p$ is defined.
    Moreover, if $pa\in\soc^{\ell-2}A=p_A^{-(\ell-2)}(0)$, then
    $a\in p^{-(\ell-1)}_A(0)=\soc^{\ell-1}A$; so $\mu_p$ is a monomorphism.
  \end{proof}

  We will use the following easy generalization.

  \begin{cor}
    \label{cor-mono}
    For an embedding $A\subset B$ and natural numbers $\ell\geq2$, $s\geq1$,
    the map
    $$\mu_p:\quad\frac{\soc^\ell A\cap\rad^{s-1}B}{\soc^{\ell-1}A\cap\rad^{s-1}B}
    \longrightarrow
    \frac{\soc^{\ell-1}A\cap\rad^sB}{\soc^{\ell-2}A\cap\rad^sB}$$
    given by multiplication by $p$ is a monomorphism.\qed
  \end{cor}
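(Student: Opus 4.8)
The plan is to reduce the statement to the previous Lemma by replacing the embedding $A\subset B$ with a suitable subquotient. First I would observe that multiplication by $p$ does carry $\soc^\ell A\cap\rad^{s-1}B$ into $\soc^{\ell-1}A\cap\rad^sB$: if $a\in\soc^\ell A$ then $pa\in\soc^{\ell-1}A$ by the argument in the Lemma (from $\soc^\ell A=p_A^{-\ell}(0)$), and if $a\in\rad^{s-1}B=p_B^{s-1}(B)$ then $pa\in p_B^s(B)=\rad^sB$. So $\mu_p$ is well defined on the indicated quotients, and the real content is injectivity.

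For injectivity I would proceed exactly as in the Lemma. Suppose $a\in\soc^\ell A\cap\rad^{s-1}B$ with $pa\in\soc^{\ell-2}A\cap\rad^sB$; the goal is $a\in\soc^{\ell-1}A\cap\rad^{s-1}B$. The second membership, $a\in\rad^{s-1}B$, is part of the hypothesis, so only $a\in\soc^{\ell-1}A$ needs proof. But $pa\in\soc^{\ell-2}A=p_A^{-(\ell-2)}(0)$ means $p^{\ell-1}a=0$, i.e.\ $a\in p_A^{-(\ell-1)}(0)=\soc^{\ell-1}A$. (Here one uses $\ell\geq2$ so that $\soc^{\ell-2}A$ makes sense, with the convention $\soc^0A=0$.) This gives $a\in\soc^{\ell-1}A\cap\rad^{s-1}B$, which is the zero class in the source, so $\mu_p$ is a monomorphism.

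This argument is essentially routine once the well-definedness of all four terms is checked, and the only mild subtlety — the reason the corollary is not literally the Lemma — is that the intersection with $\rad^{s-1}B$ in the source and $\rad^sB$ in the target are compatible with multiplication by $p$ precisely because $p\cdot\rad^{s-1}B\subseteq\rad^sB$. So I do not expect a genuine obstacle here; the "main point" is just to notice that the socle-condition and the radical-condition interact in opposite directions under $\mu_p$ (the socle index drops, the radical index rises), and that both directions are exactly what is needed for the map to land where claimed and to stay injective. The corollary is then used, presumably, to compare successive horizontal strips of $\Sigma$ after cutting along a vertical line at position $s$, which is what produces (ST-3$'$).
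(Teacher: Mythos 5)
Your argument is correct and is exactly what the paper intends: the corollary is stated there without proof as an ``easy generalization'' of the preceding Lemma, and your check that $p$ carries $\rad^{s-1}B$ into $\rad^sB$ together with the Lemma's injectivity argument ($pa\in\soc^{\ell-2}A\Rightarrow a\in\soc^{\ell-1}A$, while $a\in\rad^{s-1}B$ is already given) is precisely that generalization.
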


  The factors in the corollary describe the numbers in (ST-3$'$).

  \begin{lem}
    \label{lemma-num-entries}
    For an embedding $A\subset B$ with  socle tableau $\Sigma$
    and for natural numbers $\ell,r \geq 1$,
    $$\len\left(
    \frac{\soc^\ell A\cap\rad^{r-1}B}{\soc^{\ell-1}A\cap\rad^{r-1}B}\right)
    \,=\,
    \#\{\text{entries $\ell$ with row numbers $\geq r$ in $\Sigma$}\}.$$
  \end{lem}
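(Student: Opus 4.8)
The plan is to identify the left-hand side of the claimed equality with a count of boxes in a single row-range of the socle tableau, using the structure of the socle series together with the radical filtration on $B$. First I would recall from the preceding discussion that the entries $\ell$ in $\Sigma$ occupy the skew diagram $\sigma^{(\ell-1)}\setminus\sigma^{(\ell)}$, where $\sigma^{(i)}=\type B/\soc^iA$. The key observation is that the row number of a box of $B$ (equivalently, of the corresponding partition diagram), when we picture $\Lambda$-modules by their Young diagrams with columns of decreasing length, is governed by the radical filtration $B\supseteq\rad B\supseteq\rad^2 B\supseteq\cdots$: the boxes with row number $\geq r$ in $\type M$, for $M$ a subquotient living inside $B$ compatibly with the radical filtration, correspond exactly to $M\cap\rad^{r-1}B$ (or more precisely to the image/length contributed by that intersection). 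So I would first set up the bookkeeping that translates ``entries $\ell$ with row number $\geq r$'' into the length of an appropriate subquotient cut out by $\rad^{r-1}B$.

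Next I would make this precise. The number of entries $\ell$ in $\Sigma$ equals $\len(\soc^\ell A/\soc^{\ell-1}A)$, as established just above. To refine this by row number, note that $\soc^\ell A/\soc^{\ell-1}A$ is semisimple, so its ``diagram'' is a horizontal strip, and the boxes of that strip with row number $\geq r$ are precisely those sitting in columns of $\sigma^{(\ell-1)}$ (equivalently of $\beta$) of height $\geq r$; under the module-partition dictionary, restricting to columns of height $\geq r$ corresponds to intersecting with $\rad^{r-1}B$. Concretely, I would argue that
$$
\#\{\text{entries }\ell\text{ with row numbers}\geq r\text{ in }\Sigma\}
=\len\!\left(\frac{(\soc^\ell A\cap\rad^{r-1}B)+\soc^{\ell-1}A}{\soc^{\ell-1}A}\right)
=\len\!\left(\frac{\soc^\ell A\cap\rad^{r-1}B}{\soc^{\ell-1}A\cap\rad^{r-1}B}\right),
$$
the last equality by the second isomorphism theorem. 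The main work is justifying the first equality: that passing to the part of the horizontal strip in rows $\geq r$ is exactly the effect of intersecting with $\rad^{r-1}B$ before taking the quotient. This rests on the fact that for a submodule $U\subseteq B$, the partition $\type U$ has exactly $\len(U\cap\rad^{r-1}B)$ boxes in rows $\geq r$, applied to $U=\soc^\ell A$ and $U=\soc^{\ell-1}A$, combined with semisimplicity of the quotient so that the strip meets each column in at most one box.

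I expect the main obstacle to be precisely this translation step: carefully verifying that ``row number $\geq r$ in the skew diagram $\sigma^{(\ell-1)}\setminus\sigma^{(\ell)}$'' matches ``lies in $\rad^{r-1}B$'' at the level of the subquotient $\soc^\ell A/\soc^{\ell-1}A$, and that no boxes are lost or double-counted when both $\sigma^{(\ell-1)}$ and $\sigma^{(\ell)}$ are truncated to rows $\geq r$. The cleanest route is to prove the auxiliary fact that for any submodule $U\subseteq B$ and any $r\geq1$, the number of parts of $\type U$ that are $\geq r$ equals $\dim_k\bigl((U\cap\rad^{r-1}B)/(U\cap\rad^{r}B)\bigr)$, wait — rather, that the number of boxes of the Young diagram of $\type U$ in rows $\geq r$ equals $\len(U\cap\rad^{r-1}B)$, which follows from $\len(U\cap\rad^{r-1}B)=\len\rad^{r-1}U$ when $U$ is a submodule and the ambient $\rad$ restricts appropriately — and then to apply it to the short exact sequence $0\to\soc^{\ell-1}A\to\soc^\ell A\to\soc^\ell A/\soc^{\ell-1}A\to0$, intersected with the radical filtration of $B$, using that the middle and left terms are submodules of $B$ while the right term is semisimple so its radical filtration is trivial. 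Once that lemma is in place, the displayed chain of equalities closes the argument and, together with Corollary~\ref{cor-mono}, this is exactly what will feed the verification of (ST-3$'$).
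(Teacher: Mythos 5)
There is a genuine gap at exactly the step you flag as ``the main work.'' Your proposed bridge is the auxiliary fact that for a submodule $U\subseteq B$ the number of boxes of the Young diagram of $\type U$ in rows $\geq r$ equals $\len(U\cap\rad^{r-1}B)$, ``which follows from $\len(U\cap\rad^{r-1}B)=\len\rad^{r-1}U$.'' Both statements are false in general: take $B=\Lambda/(p^2)$ and $U=\soc B=pB$. Then $\type U=(1)$ has no box in row $2$, while $\len(U\cap\rad B)=1$; correspondingly $\len\rad U=0\neq 1=\len(U\cap\rad B)$. The correct dictionary is $\#\{\text{boxes of }\type M\text{ in rows}\geq r\}=\len\rad^{r-1}M$, and for a submodule one only has $\rad^{r-1}U\subseteq U\cap\rad^{r-1}B$, typically strictly --- indeed for $U=\soc^{\ell}A$ the discrepancy is the whole point of the lemma. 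There is also a conceptual mismatch underneath: the row numbers in $\Sigma$ are positions in the diagrams $\sigma^{(i)}=\type(B/\soc^iA)$, i.e.\ they are governed by the quotients of $B$, not by the Young diagrams of the submodules $\soc^iA$ (for the picket $P^5_4$, the entry $1$ sits in row $5$ of $\Sigma$, whereas $\type\soc A=(1)$ has no box below row $1$). So applying your auxiliary fact to $U=\soc^\ell A$ and $U=\soc^{\ell-1}A$ cannot produce the count of entries $\ell$ in rows $\geq r$ of $\Sigma$, and your first displayed equality (which, after the second isomorphism theorem, is literally the lemma) remains unproved.

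The repair is to run the radical filtration through the quotients rather than the submodules, which is what the paper does: for $S_i=B/\soc^iA$, the boxes of $\sigma^{(i)}$ in rows $\geq r$ are counted by $\len\rad^{r-1}S_i$, so the number of entries $\ell$ in rows $\geq r$ is $\len\rad^{r-1}S_{\ell-1}-\len\rad^{r-1}S_{\ell}$; then
$$\rad^{r-1}S_i=\frac{\rad^{r-1}B+\soc^iA}{\soc^iA}\cong\frac{\rad^{r-1}B}{\soc^iA\cap\rad^{r-1}B},$$
and subtracting the two lengths gives $\len\bigl((\soc^\ell A\cap\rad^{r-1}B)/(\soc^{\ell-1}A\cap\rad^{r-1}B)\bigr)$. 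Your instinct to use the second isomorphism theorem and the intersection with $\rad^{r-1}B$ is pointing at the same isomorphism, but it must be applied to $\rad^{r-1}(B/\soc^iA)$, not justified via a rows-$\geq r$ count for the submodules $\soc^iA$ themselves.
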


  \begin{proof}
    For $S_\ell=B/\soc^\ell A$, note that $\len S_\ell$ counts the number
    of boxes in $\Sigma$ which are either empty or contain an entry $>\ell$.
    Hence, $\len\rad^{r-1}S_\ell$
    counts the number of boxes in $\Sigma$ which occur in rows $\geq r$
    and which are either empty or contain an entry $>\ell$.
    Thus in the lemma, the number on the right hand side is
    $$\len\rad^{r-1}S_{\ell-1}-\len\rad^{r-1}S_{\ell}.$$
    Note that
    $$\rad^{r-1}S_\ell=\rad^{r-1}\big(\frac B{\soc^\ell A}\big)=
    \frac{\rad^{r-1}B+\soc^\ell A}{\soc^\ell A}\cong
    \frac{\rad^{r-1}B}{\soc^\ell A\cap \rad^{r-1}B}.$$
    The formula implies that
    $$\len\rad^{r-1}S_{\ell-1}-\len\rad^{r-1}S_{\ell}=
    \len\left(\frac{\soc^\ell A\cap\rad^{r-1}B}{\soc^{\ell-1}A\cap\rad^{r-1}B}\right).$$
    This finishes the proof of the lemma.
  \end{proof}
  
  \begin{prop}
    The socle tableau $\Sigma$ of an embedding $A\subset B$ satisfies
    conditions {\rm (ST-1)} through {\rm (ST-3)}.
  \end{prop}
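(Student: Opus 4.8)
The plan is to finish verifying property (ST-3) via its equivalent form (ST-3$'$) from Lemma~\ref{lem-lpp-equiv}, using the monotonicity supplied by Corollary~\ref{cor-mono} together with the counting formula of Lemma~\ref{lemma-num-entries}. First I would recall that (ST-1) and (ST-2) have already been checked above, so the only remaining task is to establish, for each row $R$ (say with row number $r$) and each natural number $\ell$, the inequality $R'_{\ell+1}\leq R_\ell$ in the notation of Lemma~\ref{lem-lpp-equiv}: the number of entries $\ell+1$ in row $r$ or below is at most the number of entries $\ell$ strictly below row $r$.

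Next I would translate both sides of this inequality into lengths of subquotients using Lemma~\ref{lemma-num-entries}. The number of entries $\ell+1$ with row number $\geq r$ equals $\len\bigl((\soc^{\ell+1}A\cap\rad^{r-1}B)/(\soc^{\ell}A\cap\rad^{r-1}B)\bigr)$, while the number of entries $\ell$ with row number $\geq r+1$ equals $\len\bigl((\soc^{\ell}A\cap\rad^{r}B)/(\soc^{\ell-1}A\cap\rad^{r}B)\bigr)$. Then I would apply Corollary~\ref{cor-mono} with the pair of indices $(\ell+1, r)$ in place of $(\ell, s)$: multiplication by $p$ gives a monomorphism
$$\frac{\soc^{\ell+1}A\cap\rad^{r-1}B}{\soc^{\ell}A\cap\rad^{r-1}B}\longrightarrow
\frac{\soc^{\ell}A\cap\rad^{r}B}{\soc^{\ell-1}A\cap\rad^{r}B},$$
so the length of the source is at most the length of the target, which is precisely $R'_{\ell+1}\leq R_\ell$. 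Here one must be a little careful to match indices: Corollary~\ref{cor-mono} is stated for $\ell\geq2$, which corresponds to $\ell+1\geq2$, i.e.\ $\ell\geq1$, exactly the range we need; and the case $\ell+1=1$ never arises since there are no entries $0$.

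The one genuine subtlety — and the step I expect to require the most care rather than the most difficulty — is the boundary case $r=1$ in the translation step, where ``strictly underneath row $R$'' for the row-version should be matched against the correct power of the radical, and one should double-check that Lemma~\ref{lemma-num-entries} is being invoked with consistent conventions ($r-1$ versus $r$, rows $\geq r$ versus $\geq r+1$). Once the bookkeeping of indices is pinned down, the proof is immediate: combine Lemma~\ref{lemma-num-entries} (both instances), Corollary~\ref{cor-mono}, and the length-monotonicity of a monomorphism to obtain (ST-3$'$), hence (ST-3) by Lemma~\ref{lem-lpp-equiv}; together with the already-established (ST-1) and (ST-2) this proves the proposition. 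I would close by noting that this mirrors the classical argument for LR-tableaux in \cite[II.3.4]{macd}, with the socle filtration replacing the radical filtration.
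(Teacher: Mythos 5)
Your proposal is correct and follows essentially the same route as the paper: (ST-1) and (ST-2) are taken as already established, and (ST-3$'$) is obtained by translating both counts via Lemma~\ref{lemma-num-entries} and comparing lengths through the monomorphism of Corollary~\ref{cor-mono} (your indices $(\ell+1,r)$ are just the paper's $(\ell,s)$ shifted), then invoking Lemma~\ref{lem-lpp-equiv}. The boundary case $r=1$ you flag is harmless, since $\rad^0B=B$ and the lemma already covers $r\geq1$.
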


  \begin{proof}
    We have seen that conditions (ST-1) and (ST-2) hold for $\Sigma$.
    Here we verify property {\rm(ST-3$'$)} which is equivalent
    to (ST-3) by Lemma~\ref{lem-lpp-equiv}.

    \smallskip
    The map $\mu_p$ in Corollary~\ref{cor-mono} is a monomorphism, hence
    $$\len\left(\frac{\soc^\ell A\cap\rad^{s-1}B}{\soc^{\ell-1}A\cap\rad^{s-1}B}\right)
    \;\leq\;
    \len\left(\frac{\soc^{\ell-1}A\cap\rad^sB}{\soc^{\ell-2}A\cap\rad^sB}\right).$$
    The claim follows from Lemma~\ref{lemma-num-entries}.
  \end{proof}

\subsection{Every socle tableau is the socle tableau of an embedding.}

We will use the following tool to construct, for a given socle tableau $\Sigma$,
a corresponding embedding $(A\subset B)$.

\begin{lem}\label{lem-epi-sequence}
 Let 
 $$
 B=C_0\stackrel{f_1}\longrightarrow C_1\stackrel{f_2}\longrightarrow
 \cdots C_{s-1}\stackrel{f_s}\longrightarrow C_s=C
 $$
 be a~sequence of surjective $\Lambda$-homomorphisms
 with semisimple kernels satisfying the following condition
 $$(*)\qquad \soc(\Ker f_{\ell+1}f_\ell)=\Ker f_\ell\quad\text{for all}
 \quad \ell=1,\ldots, s-1.$$
 Then $A=\Ker f$ where $f=f_s\cdots f_1$
 has  partition type $\alpha$ given by
 $\alpha'_\ell=\len \Ker f_\ell$ for all $\ell$.
 Moreover, $B/\soc^\ell A\cong C_\ell$ holds for each $0\leq \ell\leq s$.
\end{lem}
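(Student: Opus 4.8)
The plan is to prove the lemma by induction on the structure of the surjection chain, extracting from condition $(*)$ exactly the information that the socle filtration of $A=\Ker f$ is visible along the chain $C_0\to\cdots\to C_s$. First I would record the basic identity: for each $\ell$, writing $f_{\le\ell}=f_\ell\cdots f_1\colon B\to C_\ell$, one has $\Ker f_{\le\ell}\subseteq\Ker f_{\le\ell+1}\subseteq\cdots\subseteq\Ker f=A$, and the successive factors $\Ker f_{\le\ell+1}/\Ker f_{\le\ell}$ are isomorphic to $\Ker f_{\ell+1}$ (via $f_{\le\ell}$), hence semisimple of length $\len\Ker f_{\ell+1}=\alpha'_{\ell+1}$. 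Summing these gives $\len A=\sum_\ell\len\Ker f_\ell$, and more precisely each $\Ker f_{\le\ell}$ has length $\sum_{j\le\ell}\len\Ker f_j$; this already pins down the candidate partition $\alpha$ once we know the $\Ker f_{\le\ell}$ are the socle layers of $A$.

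The heart of the argument is the claim $\soc^\ell A=\Ker f_{\le\ell}$ for $0\le\ell\le s$, which I would prove by induction on $\ell$. The base case $\ell=0$ is $\soc^0A=0=\Ker f_{\le0}$, and $\ell=s$ will then give $\soc^sA=\Ker f_{\le s}=A$, consistent with $\len A=|\alpha|$. For the inductive step, suppose $\soc^\ell A=\Ker f_{\le\ell}$. I want $\soc^{\ell+1}A=p_A^{-1}(\soc^\ell A)=\{a\in A: pa\in\Ker f_{\le\ell}\}$. The inclusion $\Ker f_{\le\ell}\subseteq\Ker f_{\le\ell+1}$ has semisimple quotient, so $p\cdot\Ker f_{\le\ell+1}\subseteq\Ker f_{\le\ell}$, giving $\Ker f_{\le\ell+1}\subseteq\soc^{\ell+1}A$. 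For the reverse inclusion I would translate condition $(*)$: applying $f_{\le\ell-1}$ identifies $\Ker f_{\le\ell+1}/\Ker f_{\le\ell-1}$ with $\Ker(f_{\ell+1}f_\ell)$ inside $C_{\ell-1}$ and $\Ker f_{\le\ell}/\Ker f_{\le\ell-1}$ with $\Ker f_\ell$, so $(*)$ says precisely that $\Ker f_{\le\ell}/\Ker f_{\le\ell-1}$ is the socle of $\Ker f_{\le\ell+1}/\Ker f_{\le\ell-1}$; equivalently, inside $A/\Ker f_{\le\ell-1}$, the submodule $\soc^\ell A/\Ker f_{\le\ell-1}$ is the socle of $\Ker f_{\le\ell+1}/\Ker f_{\le\ell-1}$. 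Then any $a\in A$ with $pa\in\soc^\ell A=\Ker f_{\le\ell}$ in particular has $pa\in\soc^{\ell-1}A=\Ker f_{\le\ell-1}$ by the chain $\Ker f_{\le\ell-1}\subseteq\Ker f_{\le\ell}$ only after noting $pa\in\Ker f_{\le\ell}$ and that $p$ annihilates the semisimple quotient $\Ker f_{\le\ell}/\Ker f_{\le\ell-1}$, so actually $p(pa)=p^2a\in\Ker f_{\le\ell-1}$; hence $\bar a$ lies in the second socle layer of $A/\Ker f_{\le\ell-1}$, i.e.\ $\bar a\in\soc(\ \cdot\ )$-preimage, and $(*)$ forces $\bar a\in\Ker f_{\le\ell+1}/\Ker f_{\le\ell-1}$, giving $a\in\Ker f_{\le\ell+1}$. (I would need to be a little careful that the "second socle layer" in $A/\Ker f_{\le\ell-1}$ is contained in $\Ker f_{\le\ell+1}/\Ker f_{\le\ell-1}$; this is where $(*)$ is used, possibly together with the inductive hypothesis once more, and this bookkeeping is the step I expect to be the main obstacle.)

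Once $\soc^\ell A=\Ker f_{\le\ell}$ is established, the remaining assertions are immediate: $B/\soc^\ell A=B/\Ker f_{\le\ell}\cong\Im f_{\le\ell}=C_\ell$ since $f_{\le\ell}$ is surjective; and the socle layer $\soc^\ell A/\soc^{\ell-1}A\cong\Ker f_\ell$ has length $\alpha'_\ell$, so $\alpha$ is the partition with $\alpha'_\ell=\len\Ker f_\ell$, which says exactly that $A$ has partition type $\alpha$. I would close by remarking that, conversely, in a chain arising from an actual embedding the maps $f_\ell$ are the canonical surjections $B/\soc^{\ell-1}A\to B/\soc^\ell A$, so condition $(*)$ is not only sufficient but the natural hypothesis, which is what makes this lemma the right tool for the construction in the next subsection.
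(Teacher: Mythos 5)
Your plan follows the same overall route as the paper's proof (identify $\soc^\ell A$ with $\Ker(f_\ell\cdots f_1)$ by induction on $\ell$; the isomorphism $B/\soc^\ell A\cong C_\ell$ and the formula $\alpha'_\ell=\len\Ker f_\ell$ then come essentially for free), but the step you yourself flag as ``the main obstacle'' is a genuine gap, and it cannot be closed by the bookkeeping you sketch. Unwinding it: given $a\in A$ with $pa\in\Ker(f_\ell\cdots f_1)$, set $y=f_\ell\cdots f_1(a)\in C_\ell$; then $py=0$ and $f_s\cdots f_{\ell+1}(y)=0$, and what you must show is $f_{\ell+1}(y)=0$. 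In other words the reverse inclusion in your inductive step is exactly the statement
$$\soc\bigl(\Ker (f_s\cdots f_{\ell+1})\bigr)=\Ker f_{\ell+1},$$
that is, hypothesis $(*)$ with the pair $f_{\ell+2}f_{\ell+1}$ replaced by the whole remaining composition. Condition $(*)$ as stated only controls $\soc\Ker(f_{\ell+2}f_{\ell+1})$, and your reformulation of $(*)$ inside $A/\Ker(f_{\ell-1}\cdots f_1)$ identifies the socle of the submodule $\Ker(f_{\ell+1}\cdots f_1)/\Ker(f_{\ell-1}\cdots f_1)$ --- information about a module you already understand --- whereas what you need is that every element of $A/\Ker(f_{\ell-1}\cdots f_1)$ killed by $p^2$ (with $p$-multiple in the prescribed submodule) already lies in that submodule; neither $(*)$ nor your inductive hypothesis on the lower socle layers gives this.

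The missing ingredient is a separate induction on the number of composed maps: from $(*)$ one first proves $\soc\bigl(\Ker(f_{\ell+j}\cdots f_\ell)\bigr)=\Ker f_\ell$ for all $j\ge 1$ (if $x$ lies in the socle of $\Ker(f_{\ell+j}\cdots f_\ell)$ and $f_{\ell+j-1}\cdots f_\ell(x)\neq0$, then $f_\ell(x)$ lies in the socle of $\Ker(f_{\ell+j}\cdots f_{\ell+1})$, hence in $\Ker f_{\ell+1}$ by induction on $j$, a contradiction). This strengthened statement is exactly the first half of the paper's proof, and once it is available your induction on $\ell$ goes through; everything else in your outline --- the easy inclusion $\Ker(f_{\ell+1}\cdots f_1)\subseteq\soc^{\ell+1}A$, the identification of the socle layers with $\Ker f_\ell$, and the conclusions about $\alpha$ and $B/\soc^\ell A\cong C_\ell$ --- is correct.
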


\begin{proof}
  We first show by induction on $j$ that 
  for each $\ell$ the equality $\soc( \Ker f_{\ell+j}f_{\ell+j-1}\cdots f_\ell)=\Ker f_\ell$ holds.
  The case where $j=0,1$ is satisfied by  assumption.  Let $j>1$.
  The inclusion $\Ker f_\ell\subset \soc(\Ker f_{\ell+j}\cdots f_\ell)$ is clear, so
  let $x\in \soc (\Ker f_{\ell+j}\cdots f_\ell)$.
  If $x\in \Ker f_{\ell+j-1}\cdots f_\ell$, we are finished by  induction.
  If  $f_{\ell+j-1}\ldots f_\ell(x)\neq 0$, then 
  $f_\ell(x)\in \soc (\Ker f_{\ell+j}\cdots f_{\ell+1})$.
  By  induction, $f_\ell(x)\in\Ker f_{\ell+1}$ and we have
  a~contradiction to the assumption that $f_{\ell+j-1}\cdots f_\ell(x)\neq 0$.
   Hence $x\in\Ker f_{\ell}$.

  \smallskip
  Next we observe that for any $\ell=1,\ldots, s-1$ there is the following commutative diagram
  with exact rows and columns.
  $$
  \xymatrix{ &0\ar[d]&0\ar[d]&&\\
    &\Ker f_{\ell}\ar[d]\ar@{=}[r]&\Ker f_{\ell}\ar[d]&&\\
    0\ar[r]&\Ker f_s\cdots f_{\ell}\ar[r]\ar[d]&C_{\ell-1}\ar[r]^{f_s\cdots f_{\ell}}\ar[d]_{f_\ell}&C_s\ar[r]\ar@{=}[d]&0\\
    0\ar[r]&\Ker f_s\cdots f_{\ell+1}\ar[r]\ar[d]&C_\ell\ar[r]_{f_s\cdots f_{\ell+1}}\ar[d]&C_s\ar[r]&0\\
 &0&0&& 
 }
  $$

  In the first part we have seen that $\Ker f_\ell=\soc \Ker f_s\cdots f_{\ell}$;
  using the exactness of the left column in the diagram we obtain
  $$\frac{\Ker f_s\cdots f_{\ell}}{\soc (\Ker f_s\cdots f_{\ell})}\cong\Ker f_s\cdots f_{\ell+1}.$$

  Thus the Young diagram for $\Ker f_s\cdots f_{\ell}$ is obtained from the Young diagram
  for $\Ker f_s\cdots f_{\ell+1}$ by adding a new top row of length
  $$\len \soc(\Ker f_s\cdots f_\ell)=\len\Ker f_\ell.$$

  Suppose that $A=\Ker f_s\cdots f_1$ has partition type $\alpha$.
  Then it follows that $\alpha'_\ell = \len\Ker f_\ell$.

  \medskip
  Next we show by induction on $\ell$  that $B/\soc^\ell A\cong C_\ell$.
  More precisely, we show in each step that $\soc^\ell A=\Ker f_\ell\cdots f_1$.
  Since the epimorphism
  $$\pi_\ell:B\to C_\ell,\quad b\mapsto f_\ell\cdots f_1(b),$$
  maps $A$ onto $\Ker f_s\cdots f_{\ell+1}$, it has kernel $\soc^\ell A$ and hence
  induces the desired isomorphism $B/\soc^\ell A\cong C_\ell$.

  \smallskip
  For $\ell=0$ there is nothing to show.  Assume $\ell>0$.
  \begin{eqnarray*}
    \soc^\ell A     &=&\pi_{\ell-1}^{-1}(\soc\Ker f_s\cdots f_{\ell})\\
    &=& \pi_{\ell-1}^{-1}(\Ker f_{\ell})\\
    & = & \{b:\pi_{\ell-1}(b)\in\Ker f_{\ell}\}\\
    &=& \Ker f_\ell\cdots f_1\\
    &=&\Ker\pi_\ell
  \end{eqnarray*}
  The first equality holds by induction hypothesis:  $\pi_{\ell-1}:A\to \Ker f_s\cdots f_\ell$
  is an epimorphism with kernel $\soc^{\ell-1}A$.  Hence by definition of the $\ell$-th socle,
  $\soc^\ell A$ is the inverse image of the socle of $\Ker f_s\cdots f_\ell$.
  The equations show that $\pi_\ell$ induces an isomorphism $B/\soc^\ell A\cong C_\ell$.
\end{proof}

\begin{thm}
  \label{thm-kleinST}
  Every socle tableau can be realized as the socle
  tableau of an embedding.
\end{thm}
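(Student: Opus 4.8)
The plan is to reduce the theorem to Lemma~\ref{lem-epi-sequence} by constructing, from a given socle tableau $\Sigma$ of shape $(\alpha,\beta,\gamma)$, a sequence of surjective $\Lambda$-homomorphisms
$$
B=C_0\stackrel{f_1}\longrightarrow C_1\stackrel{f_2}\longrightarrow\cdots\stackrel{f_s}\longrightarrow C_s=C
$$
with semisimple kernels satisfying condition $(*)$, where $C_\ell$ is the $\Lambda$-module of type $\sigma^{(\ell)}$ (the partition read off from $\Sigma$ as in Subsection~\ref{sub-embedding}), $s=\alpha_1$. Once such a sequence is built, Lemma~\ref{lem-epi-sequence} immediately gives that $A=\Ker(f_s\cdots f_1)$ has type $\alpha$ (since $\alpha'_\ell=\len\Ker f_\ell$ is exactly the number of entries $\ell$ in $\Sigma$, by the definition of the partition sequence), that $B=C_0$ has type $\beta$ and $C=C_s$ has type $\gamma$, and that $B/\soc^\ell A\cong C_\ell$, so the socle tableau of the embedding $(A\subset B)$ is precisely $\Sigma$.

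First I would fix, for each $\ell$, an explicit semisimple quotient map $f_\ell:C_{\ell-1}\to C_\ell$. Since $\sigma^{(\ell)}\subseteq\sigma^{(\ell-1)}$ and $\sigma^{(\ell-1)}\setminus\sigma^{(\ell)}$ is a horizontal strip (conditions ST-1, ST-2 read off the partition sequence), passing from $C_{\ell-1}$ to $C_\ell$ means: for certain cyclic summands $\Lambda/(p^{a})$ of $C_{\ell-1}$ we pass to $\Lambda/(p^{a-1})$ (killing the socle of that summand), and the set of affected parts is recorded by the columns of $\Sigma$ carrying an entry $\ell$. The natural choice is to let $f_\ell$ be the map that is the canonical projection $\Lambda/(p^{a})\to\Lambda/(p^{a-1})$ on the summands indexed by a box with entry $\ell$, and the identity elsewhere, after fixing a bijection between parts of $\sigma^{(\ell-1)}$ and the columns of $\Sigma$. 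The subtlety is that to make $(*)$ work one cannot take $f_\ell$ diagonal on a fixed decomposition; one has to build the maps compatibly so that the kernel of the composite $f_{\ell+1}f_\ell$ is semisimple \emph{and} its socle is exactly $\Ker f_\ell$. Concretely, $\Ker(f_{\ell+1}f_\ell)$ will be an extension of the semisimple module $\Ker f_{\ell+1}$ by the semisimple module $\Ker f_\ell$, living inside $C_{\ell-1}$ as the submodule of elements $x$ with $p^2x=0$ landing in the relevant summands; condition $(*)$ says this extension is ``as non-split as possible'' --- each generator of $\Ker f_{\ell+1}$ should lift to an element of $C_{\ell-1}$ whose $p$-multiple is the corresponding generator of $\Ker f_\ell$.

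The key combinatorial input is the modified lattice permutation property, in the form (ST-3$'$) of Lemma~\ref{lem-lpp-equiv}, or equivalently the matching $\varphi_\ell$ of (ST-3$''$): it guarantees, for each $\ell$, an injection from the boxes with entry $\ell+1$ to the boxes with entry $\ell$, respecting columns when possible. This injection is exactly what is needed to choose, for each cyclic summand of $C_\ell$ whose socle gets killed by $f_{\ell+1}$, a cyclic summand of $C_{\ell-1}$ (one ``level higher'') whose socle gets killed by $f_\ell$ and into which the first summand maps --- so that the relation $p\cdot(\text{gen of }\Ker f_{\ell+1}\text{ inside }C_{\ell-1})=(\text{gen of }\Ker f_\ell)$ holds. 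When $\varphi_\ell$ keeps the box in the same column, the two summands come from the same part (long enough to absorb two successive socle deletions) and the compatibility is automatic; when $\varphi_\ell$ moves to a column strictly to the left, one routes the map $C_{\ell-1}\to C_\ell$ to identify the relevant socle elements across distinct summands. I would set this up by induction on $\ell$, carrying along an explicit chosen basis of $C_{\ell-1}$ adapted to $\Sigma$ and describing $f_\ell$ on basis elements.

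The main obstacle I expect is the bookkeeping needed to verify condition $(*)$ globally rather than one step at a time: one must check that the socle of $\Ker(f_{\ell+1}f_\ell)$ is \emph{contained} in $\Ker f_\ell$ (the reverse inclusion is automatic), which amounts to showing that no element $x\in C_{\ell-1}$ with $px\neq 0$ but $f_{\ell+1}f_\ell(x)=0$ and $px\in\soc$ can occur outside $\Ker f_\ell$ --- i.e.\ that every such $x$ genuinely has $px$ equal to one of the distinguished socle generators. This is where the injectivity of $\varphi_\ell$ (no two entries $\ell+1$ compete for the same entry $\ell$) is essential, and where a careless diagonal construction would fail. I would handle it by making the $f_\ell$ ``upper triangular'' with respect to an ordering of summands by the column index of the corresponding box, so that $\Ker f_\ell$ sits at the bottom and the only socle elements of $\Ker(f_{\ell+1}f_\ell)$ are forced into $\Ker f_\ell$. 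The rest --- that the kernels are semisimple of the right length, and that $C_s$ has type $\gamma$ --- is then routine from $\sigma^{(s)}=\gamma$ and the horizontal-strip property.
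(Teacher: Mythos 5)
Your proposal is correct and follows essentially the same route as the paper: build modules $C_\ell$ of type $\sigma^{(\ell)}$, note the canonical diagonal projections alone need not satisfy condition $(*)$, and use the matching $\varphi_\ell$ from (ST-3$''$) to twist them by upper-triangular corrections (the paper's automorphisms $h^{(\ell)}$, which insert the inclusion $P^v\to P^u$ between the paired columns) so that $(*)$ holds, then invoke Lemma~\ref{lem-epi-sequence}. The paper carries out your "upper triangular with respect to column index" idea exactly, treating single columns and $\varphi_\ell$-paired columns separately and verifying $(*)$ on each block.
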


\begin{proof}  We first present an overview of the proof, then an example, and finally
  the general construction.

  \smallskip
  \boldit{Overview:} 
  Suppose that the socle tableau $\Sigma$ has shape $(\alpha,\beta,\gamma)$.
  We take $B=N_\beta$ and $C=N_\gamma$ and use Lemma~\ref{lem-epi-sequence}
  to construct an epimorphism $f:B\to C$ with kernel $A$ such that the 
  embedding $(A\subset B)$ has the desired socle tableau.

  \medskip
  The tableau $\Sigma$ is given by partitions $\sigma^{(\ell)}$,
  where $0\leq \ell\leq s$ with $s=\alpha_1$.  Let $C^{(\ell)}$ be a module of type $\sigma^{(\ell)}$,
  more precisely, put
  $$C^{(\ell)}=\textstyle \bigoplus_j C^{(\ell)}_j,$$
  where $C^{(\ell)}_j=P^{\sigma^{(\ell)}_j}$ is the indecomposable $\Lambda$-module of
  length $\sigma^{(\ell)}_j$.  It follows from (ST-1) and (ST-2) that 
  $\sigma^{(\ell)}\subset\sigma^{(\ell-1)}$, hence the $j$-th parts satisfy $\sigma^{(\ell)}_j\leq
  \sigma^{(\ell-1)}_j$, so there are canonical maps
  $g^{(\ell)}_j:C^{(\ell-1)}_j\to C^{(\ell)}_j$.  We can define the map $g^{(\ell)}:C^{(\ell-1)}\to C^{(\ell)}$
  as the diagonal map
  $$\textstyle g^{(\ell)}=\bigoplus_j g^{(\ell)}_j:
  \quad \bigoplus_j C^{(\ell-1)}_j\to \bigoplus_j C^{(\ell)}_j.$$
  (In case the partition $\sigma^{(\ell-1)}$ has more parts than $\sigma^{(\ell)}$,
  formally add parts of size 0.)
  Since $\sigma^{(\ell-1)}\setminus\sigma^{(\ell)}$ is a horizontal strip of length $\alpha'_\ell$,
  the kernel of $g_\ell$ is a semisimple $\Lambda$-module of length $\alpha'_\ell$.

  \medskip
  For $\ell=1,\ldots,s-1$ we construct automorphisms $h^{(\ell)}:C^{(\ell)}\to C^{(\ell)}$
  such that $\soc (\Ker g^{(\ell+1)} h^{(\ell)} g^{(\ell)})=\Ker g^{(\ell)}$.
  Then the maps $$f_\ell=\left\{\begin{array}{ll}h^{(\ell)} g^{(\ell)}, &\text{if}\; 1\leq\ell<s\\
  g^{(s)} & \text{if}\;\ell=s\end{array}\right.$$ satisfy Condition (*) in
  Lemma~\ref{lem-epi-sequence}.

  \smallskip
  \boldit{An example:}
  We briefly pause the proof to illustrate the situation in the example of
  socle tableau $\Sigma_2$ from above.
  $$  \Sigma_2:\raisebox{-5mm}{\STtwo} $$
  Consider the first column.  Since it contains a box \singlebox1, the map
  $g^{(1)}_1:P^5\to P^4$ has a one-dimensional kernel.  Similarly for the box
  \singlebox2, the map $g^{(2)}_1:P^4\to P^3$ has a one-dimensional kernel.
  The kernel of the composition $g^{(2)}_1\circ g^{(1)}_1$ is isomorphic to
  $\Lambda/(p^2)$, hence the condition in Lemma~\ref{lem-epi-sequence} is satisfied:
  $$\soc(\Ker g^{(2)}_1\circ g^{(1)}_1)=\Ker g^{(1)}_1$$
  Going on, since there are no boxes \singlebox 3\ or \singlebox 4\ in the  first
  column, we have $g^{(3)}_1=1_{P^3}=g^{(4)}_1$.  Hence for $\ell=2,3$, the condition
  $\soc(\Ker g^{(\ell+1)}_1\circ g^{(\ell)}_1)=\Ker g^{(\ell)}_1$ is satisfied.

  \smallskip
  We see that the only situation where the condition is violated occurs when
  $g^{(\ell+1)}_j$ is a proper epimorphism and $g^{(\ell)}_j$ the identity map, that is,
  when the $j$-th column in $\Sigma$ contains a box $b$ with entry
  $\ell+1$ but no box with entry $\ell$.  In our example, this occurs three times,
  when $(\ell,j)\in\{(1,3), (2,2), (3,3)\}$.
  Let us consider the case $(\ell,j)=(2,2)$, where we have a box \singlebox3\ in the
  second column for which the corresponding box \singlebox2\ occurs
  in the first column. 
  We treat both columns simulaneously to satisfy the condition in the Lemma,
  this involves the maps $g^{(2)}_1\oplus g^{(2)}_2$,
  $h^{(2)}_{1,2}=\left(\begin{smallmatrix} 1 & \text{incl}\\
    0 & 1\end{smallmatrix}\right)$,
  and $g^{(3)}_1\oplus g^{(3)}_2$ where
  $\incl=\mu_p:P^2\to P^3, a\mapsto pa$, is the inclusion map:
  $$\xymatrix{P^4\oplus P^2\ar[r]^{\text{can}\oplus1}
    & P^3\oplus P^2\ar[r]^{\left(\begin{smallmatrix} 1 & \text{incl}\\
        0 & 1\end{smallmatrix}\right)} & P^3\oplus P^2\ar[r]^{1\oplus\text{can}} & P^3\oplus P^1}$$
                The composition $C$ of the three maps is the
  epimorphism in the short exact sequence
  $$\xymatrix{0\ar[r] & P^2\ar[r]^{\left(\begin{smallmatrix}\mu_{p^2}\\
        -\mu_p \end{smallmatrix}\right)}
    & \quad P^4\oplus P^2\quad
    \ar[r]^{\left(\begin{smallmatrix}\text{can} & \text{incl}\\
        0 & \text{can}\end{smallmatrix}\right)}
    & \;P^3\oplus P^1\ar[r] & 0}$$
  We see that $\soc \Ker C=\soc P^4=\Ker g^{(2)}_1\oplus g^{(2)}_2$,
  so Condition (*) in Lemma~\ref{lem-epi-sequence} is satisfied.

  \smallskip
  \boldit{The general construction:}
  For each $\ell=1,\cdots,s-1$ we construct the automorphism $h^{(\ell)}$ of $C^{(\ell)}$.
  We use condition (ST-3$''$) on $\Sigma$ to partition the set of columns of
  the Young diagram for $\sigma^{(\ell)}$
  into subsets which consist either of a single column or a pair of columns.
  Using the map $\varphi^{(\ell)}$, we consider the following cases:

  \smallskip
  (a)  Column $j$ contains boxes with entries $\ell$ and $\ell+1$; by definition
  of $\varphi^{(\ell)}$, the box with entry $\ell+1$ is mapped to the box with entry $\ell$.
  In this case, we add the singleton $\{j\}$ to our partition of the columns and
  put $h^{(\ell)}_j=1$, the identity map on $C^{(\ell)}_j$.

  \smallskip
  (b) Column $j$ contains neither a box with entry $\ell$ nor one with entry $\ell+1$.
  Again, column $j$ forms a singleton, and we put $h^{(\ell)}_j=1$.

  \smallskip
  (c) Column $j$ contains a box with entry $\ell$ which is not in the image of
  $\varphi^{(\ell)}$.  Also in this case, column $j$ is a singleton and $h^{(\ell)}_j=1$.

  \smallskip
  (d) The remaining columns contain either a box with entry $\ell+1$, or a box
  with entry $\ell$ in the image of $\varphi^{(\ell)}$.  Suppose column $i$ contains
  entry $\ell$ and column $j$ entry $\ell+1$ and $\varphi^{(\ell)}$ maps the box in
  column $j$ to the box in column $i$.  Then $i<j$ and if
  $u=\sigma^{(\ell)}_i$, $v=\sigma^{(\ell)}_j$ are the lengths of the two columns
  in the Young diagram for $\sigma^{(\ell)}$, then $u>v$.
  In this case, we add the pair $\{i,j\}$ to our partition of the set of columns,
  and define
  $$h^{(\ell)}_{i,j}:C^{(\ell)}_i\oplus C^{(\ell)}_j\to C^{(\ell)}_i\oplus C^{(\ell)}_j, \quad
  {x \choose y}\mapsto \left(\begin{array}{cc} 1 & \incl\\ 0 & 1\end{array}\right) \cdot{x\choose y}$$
  where $\incl$ is the inclusion map from $C^{(\ell)}_j=P^v$ to $C^{(\ell)}_i=P^u$
  given by multiplication by $p^{u-v}$.

  \smallskip
  We have accounted for each column.
  Then $h^{(\ell)}=\bigoplus_S h^{(\ell)}_S$ where
  $S$ runs over the parts of the partition of the set of columns is an isomorphism
  of $C^{(\ell)}$.  Put $g^{(\ell)}_S=\bigoplus_{j\in S}\,g^{(\ell)}_j$, then one can check that
  in each of the four cases (a) -- (d),
  $$\soc(\Ker g^{(\ell+1)}_S\, h^{(\ell)}_S\, g^{(\ell)}_S)=\Ker g^{(\ell)}_S$$
  holds (for case (d), see the example in this proof).
  Putting $f_\ell=h^{(\ell)}\,g^{(\ell)}$ (where $h^{(s)}=1$),
  we see that Condition (*) in Lemma~\ref{lem-epi-sequence} is satisfied.
  Let $A=\Ker f_s\cdots f_1$, then
  the embedding $(A\subset B)$ has socle tableau $\Sigma$.
\end{proof}

\section{Socle tableau, dual LR-tableau and Hom-matrix}
\label{sec-soc-lr-hom}

Let $X$ be an embedding of type $(\alpha,\beta,\gamma)$.
The \boldit{Hom-matrix} for $X$ describes the sizes of the homomorphism
groups from the pickets into $X$.  The matrix $H=(h_\ell^m)_{\ell,m}$
is given by $h_\ell^m=\len\Hom_{\mathcal S}(P_\ell^m,X)$.

\medskip
In this section, we discuss the interplay between the socle tableau
for $X$, the LR-tableau of the dual embedding $DX$ and the Hom-matrix $H$.

\begin{thm}
  \label{thm-lr-soc-hom}
  For an embedding $X$, the following
  invariants are equivalent in the sense that each one determines both
  of the others.
  \begin{enumerate}
  \item The socle tableau $\Sigma=\Sigma_X$.
  \item The LR-tableau of the dual embedding $\Gamma^*=\Gamma_{DX}$.
  \item The Hom-matrix $H=(h_\ell^m)_{\ell,m}$ where
    $h_\ell^m= \hom_{\mathcal S}(P_\ell^m,X)$.
  \end{enumerate}
\end{thm}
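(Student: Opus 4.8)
The plan is to establish the equivalence (1) $\Leftrightarrow$ (3) and (2) $\Leftrightarrow$ (3) separately, using the Hom-matrix $H=(h_\ell^m)$ as the pivot. The key computational input is a formula expressing $h_\ell^m=\hom_{\mathcal S}(P_\ell^m,X)$ in terms of the lengths of the modules $\soc^i A\cap\rad^j B$, where $X=(A\subset B)$. Concretely, a homomorphism $P_\ell^m\to X$ is determined by where the generator of the ambient module $P^m=\Lambda/(p^m)$ goes: it must land in $\{b\in B: p^m b=0\}=\soc^m B$, and the image of the distinguished generator of $\soc^\ell P^m$ (namely $p^{m-\ell}$ times the generator) must lie in $A$, i.e.\ the generator must map into $p_B^{-(m-\ell)}(A)\cap\soc^m B$. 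Thus $h_\ell^m=\len\big(\soc^m B\cap p_B^{-(m-\ell)}(A)\big)$, and an elementary manipulation rewrites this in terms of the numbers $\len(\soc^\ell A\cap\rad^r B)$ that already appeared in Lemma~\ref{lemma-num-entries}.

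First I would prove (1) $\Leftrightarrow$ (3). In the direction $\Sigma\rightsquigarrow H$: by Lemma~\ref{lemma-num-entries}, the number of entries $\ell$ in rows $\geq r$ of $\Sigma$ equals $\len(\soc^\ell A\cap\rad^{r-1}B)-\len(\soc^{\ell-1}A\cap\rad^{r-1}B)$, so the socle tableau determines all the quantities $\len(\soc^\ell A\cap\rad^r B)$ by summation (telescoping in $\ell$ and using $\soc^0 A=0$), hence determines $H$ via the formula above. Conversely, from $H$ one recovers each $\len(\soc^\ell A\cap\rad^r B)$ by inverting the same linear relations, and then the entry-counts of $\Sigma$ by Lemma~\ref{lemma-num-entries}; since a tableau satisfying (ST-1), (ST-2) is determined by, for each $\ell$ and each row, how many entries $\ell$ lie in that row (the horizontal-strip condition (ST-2) forces the placement within the row), $\Sigma$ is recovered.

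Next I would prove (2) $\Leftrightarrow$ (3). Here I use the dual embedding $DX=(DC\subset DB)$ of type $(\gamma,\beta,\alpha)$, together with the standard adjunction $\hom_\Lambda(M,E)=\len M$ and the fact that $\Hom_{\mathcal S}(P_\ell^m, X)\cong\Hom_{\mathcal S}((P_\ell^m)^*, DX)$, where the dual picket $(P_\ell^m)^*$ is again a picket, namely $P_{m-\ell}^m$. Hence $h_\ell^m(X)=h_{m-\ell}^m(DX)$, so the Hom-matrix of $X$ and that of $DX$ determine each other. By the version of this equivalence for radical series — which is either classical or follows by the dual of the argument in the previous paragraph, expressing $\hom_{\mathcal S}(P_\ell^m,Y)$ in terms of $\len(\rad^\ell A'\cap\rad^r B')$ for $Y=(A'\subset B')$ and invoking the analogue of Lemma~\ref{lemma-num-entries} for the LR-tableau (see \cite[II.3.4]{macd}) — the Hom-matrix of $DX$ is equivalent to the LR-tableau $\Gamma_{DX}=\Gamma^*$. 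Composing the two equivalences gives (2) $\Leftrightarrow$ (3).

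The main obstacle is the first formula, $h_\ell^m=\len\big(\soc^m B\cap p_B^{-(m-\ell)}(A)\big)$, and especially its clean conversion into the $\len(\soc^\ell A\cap\rad^r B)$ data: one must check carefully that a map of embeddings $P_\ell^m\to X$ is genuinely free to send the generator anywhere in that intersection (no further constraint from the submodule side), and track the off-by-one indices between socle/radical levels of the picket and those of $A$ inside $B$. A secondary subtlety is verifying that the linear system relating $H$ to the intersection-lengths is actually invertible over $\mathbb Z$ in the relevant range — this should reduce to the system being triangular once the indices are ordered appropriately, but it needs to be stated precisely. The remaining steps (the picket-duality identity $(P_\ell^m)^*\cong P_{m-\ell}^m$, the adjunction, and the reduction to the known LR-case) are routine once the dictionary is set up.
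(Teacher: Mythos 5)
Your (1)$\Leftrightarrow$(3) half is essentially sound and close in spirit to the paper's own argument in Subsection~\ref{subsec-one}. The formula $h_\ell^m=\len\{b\in B: p^mb=0,\ p^{m-\ell}b\in A\}$ is correct, and the ``elementary manipulation'' you defer is simply the exact sequence obtained from $b\mapsto p^{m-\ell}b$, which gives $h_\ell^m=\len\soc^{m-\ell}B+\len(\soc^\ell A\cap\rad^{m-\ell}B)=h_0^{m-\ell}+\len(\soc^\ell A\cap\rad^{m-\ell}B)$; so the linear system you worry about is triangular (telescoping in $\ell$, with the extra term read off from the row $\ell=0$ of $H$, resp.\ from $\beta=\sigma^{(0)}$, which is part of the data of $\Sigma$). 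Note, though, that the intersection numbers $\len(\soc^\ell A\cap\rad^rB)$ alone do \emph{not} determine $H$ --- you also need $\beta$ --- and that it is (ST-1), not (ST-2), which forces the placement of entries within a row; both are minor and repairable. The paper instead writes $h_\ell^m=\len\soc^\ell A+\len\soc^{m-\ell}(B/\soc^\ell A)$ and reads the partitions $\sigma^{(\ell)}$ directly off $H$; the content is the same.

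The genuine gap is in your (2)$\Leftrightarrow$(3). Duality is contravariant, so $\Hom_{\mathcal S}(P_\ell^m,X)\cong\Hom_{\mathcal S}(DX,DP_\ell^m)=\Hom_{\mathcal S}(DX,P_{m-\ell}^m)$, \emph{not} $\Hom_{\mathcal S}(P_{m-\ell}^m,DX)$, and your identity $h_\ell^m(X)=h_{m-\ell}^m(DX)$ is false: for $X=P_1^1$ one has $DX=P_0^1$, and $\hom_{\mathcal S}(P_0^1,X)=1$ while $\hom_{\mathcal S}(P_1^1,DX)=0$. The second link in your chain fails as well: there is no expression of $\hom_{\mathcal S}(P_\ell^m,Y)$ in terms of $\len(\rad^\ell A'\cap\rad^rB')$, because covariant Homs from pickets see the \emph{socle} filtration of the submodule (this is exactly your own formula), so the covariant Hom-matrix of $Y$ determines $\Sigma_Y$, not $\Gamma_Y$; indeed the paper's examples $M_1,M_2$ have $\Gamma_1=\Gamma_2$ but $\Sigma_1\neq\Sigma_2$, hence different covariant Hom-matrices, so the ``radical analogue'' you invoke cannot exist. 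Consequently your route, even if both steps were repaired to their true covariant versions, would land on $\Gamma_X$ rather than $\Gamma_{DX}$. The correct passage --- and the paper's --- is the contravariant one: $h_\ell^m(X)=\hom_{\mathcal S}(DX,P_{m-\ell}^m)$, combined with the fact that Homs \emph{into} pickets are computed from the LR-tableau of the source, $\hom_{\mathcal S}(Y,P_\ell^m)=(\gamma^{(\ell)})'_1+\cdots+(\gamma^{(\ell)})'_m$ (Lemma~\ref{hom-dual}, resp.\ \cite{s-lrtableau}); with that replacement your outline becomes the proof given in Lemma~\ref{lemma-dual-lr} and the surrounding subsection.
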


In the following three subsections, we give explicit combinatorial
formulas   for the conversion 
between the socle tableau and the Hom-matrix; the dual LR-tableau
and the Hom-matrix; and the socle tableau and the dual LR-tableau.

\subsection{The socle tableau and the Hom-matrix}
\label{subsec-one}
We denote by $\mu_\Sigma(\boxedentry3er)$ the multiplicity of the entry $e$
in row $r$ in $\Sigma$.

\medskip
Our aim is to show the two formulas:
\begin{eqnarray*}
  \mu_\Sigma(\boxedentry3\ell r) &= &
  h_\ell^{r+\ell-1}-h_\ell^{r+\ell}-h_{\ell-1}^{r+\ell-2}+h_{\ell-1}^{r+\ell-1}\\
  & = & h_\ell^{m-1}-h_\ell^m-h_{\ell-1}^{m-2}+h_{\ell-1}^{m-1}
\end{eqnarray*}
where we substitute $m=r+\ell$ in the second line.
Conversely, one can retrieve each entry in the Hom-matrix from the
socle tableau.
\begin{eqnarray*}h_\ell^m & = &
  |\soc^\ell A| + |\soc^{m-\ell}(B/\soc^\ell A)|\\
  & = & \alpha'_1+\cdots+\alpha'_\ell+
  (\sigma^{(\ell)})'_1+\cdots+(\sigma^{(\ell)})'_{m-\ell}
\end{eqnarray*}

\bigskip
For the proof of the first formula, we use that for a $\Lambda$-module $U$
of type $\lambda$, the length of the $r$-th row of the Young diagram
for $\lambda$ is
$$\lambda'_r=\len\soc^rU-\len\soc^{r-1}U.$$
We also use the following

\begin{lem}
  For an embedding $(A\subset B)$ and a picket $P_\ell^m$ with $m=\ell+r$
  we have:
  $$\hom_{\mathcal S}(P_\ell^m,(A\subset B))= 
  \hom_\Lambda(P^\ell,A) + \hom_\Lambda(P^r,B/\soc^\ell A)$$
\end{lem}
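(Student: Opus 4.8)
The plan is to identify both sides of the asserted equation with the length of a single submodule of $B$, namely $p_B^{-r}(\soc^\ell A)=\{b\in B:p^rb\in\soc^\ell A\}$, where $r=m-\ell$.

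First I would unravel the left-hand side. A morphism in $\mathcal S$ from the picket $P_\ell^m=(\soc^\ell P^m\subset P^m)$ to $(A\subset B)$ is precisely a $\Lambda$-homomorphism $\phi\colon P^m\to B$ with $\phi(\soc^\ell P^m)\subseteq A$. Since $P^m=\Lambda/(p^m)$ is cyclic, $\phi$ is determined by the image $b=\phi(1+(p^m))$, which may be an arbitrary element of $B$ annihilated by $p^m$; and since $\soc^\ell P^m$ is generated by $p^r+(p^m)$, the condition $\phi(\soc^\ell P^m)\subseteq A$ translates to $p^rb\in A$. Thus $\Hom_{\mathcal S}(P_\ell^m,(A\subset B))$ is isomorphic, as a $\Lambda$-module, to $\{\,b\in B:\ p^mb=0,\ p^rb\in A\,\}$. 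The key observation is that the two conditions collapse to one: if $p^rb\in A$ and $p^mb=p^\ell(p^rb)=0$ then $p^rb\in\soc^\ell A$, and conversely $p^rb\in\soc^\ell A$ already forces $p^mb=p^\ell(p^rb)=0$. Hence this set is exactly $p_B^{-r}(\soc^\ell A)$, so $\hom_{\mathcal S}(P_\ell^m,(A\subset B))=\len p_B^{-r}(\soc^\ell A)$.

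For the right-hand side I would use the standard fact that for any $\Lambda$-module $M$ and $n\geq0$ the map $\psi\mapsto\psi(1+(p^n))$ identifies $\Hom_\Lambda(P^n,M)$ with $\soc^nM=p_M^{-n}(0)$, again because $P^n$ is cyclic of length $n$; so $\hom_\Lambda(P^\ell,A)=\len\soc^\ell A$ and $\hom_\Lambda(P^r,B/\soc^\ell A)=\len\soc^r(B/\soc^\ell A)$. An element $\bar b\in B/\soc^\ell A$ satisfies $p^r\bar b=0$ exactly when $p^rb\in\soc^\ell A$, i.e.\ when $b\in p_B^{-r}(\soc^\ell A)$; therefore $\soc^r(B/\soc^\ell A)=p_B^{-r}(\soc^\ell A)/\soc^\ell A$, noting $\soc^\ell A\subseteq p_B^{-r}(\soc^\ell A)$. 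Applying additivity of length to the short exact sequence
$$0\longrightarrow\soc^\ell A\longrightarrow p_B^{-r}(\soc^\ell A)\longrightarrow \soc^r(B/\soc^\ell A)\longrightarrow 0$$
yields $\len p_B^{-r}(\soc^\ell A)=\len\soc^\ell A+\len\soc^r(B/\soc^\ell A)$, which is exactly the claimed identity.

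I do not expect a serious obstacle: the argument is bookkeeping with the definitions of the operators $p_B,p_B^{-1}$, the socle series $\soc^\ell=p^{-\ell}(0)$, and the cyclic structure of pickets. The one place that needs care is the ``two conditions collapse to one'' step, together with a quick check that the degenerate cases $\ell=0$ and $\ell=m$ (where $P^\ell$ or $P^r=P^{m-\ell}$ is the zero module) cause no trouble, as both sides then reduce correctly.
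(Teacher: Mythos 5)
Your argument is correct and follows essentially the same route as the paper: both identify $\Hom_{\mathcal S}(P_\ell^m,(A\subset B))$ with $p_B^{-r}(\soc^\ell A)$ via the generator of the picket (with the two conditions collapsing to one), and both recover the right-hand side from $\len\soc^\ell A+\len\soc^r(B/\soc^\ell A)=\len p_B^{-r}(\soc^\ell A)$, the paper phrasing this length additivity as exactness of $\Hom_\Lambda(P^m,-)$ on $\Lambda/(p^m)$-modules. No gap.
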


\begin{proof}
  \begin{eqnarray*}
    \lefteqn{\hom_{\mathcal S}(P_\ell^m,(A\subset B)) = }\\
    \qquad & =& \hom_{\mathcal S}(P_\ell^m,(\soc^\ell A\subset B))\\
    &=& \hom_\Lambda(P^m,p_B^{-r}(\soc^\ell A))\\
    &=& \hom_\Lambda(P^m,\soc^\ell A)
    +\hom_\Lambda(P^m,p_B^{-r}(\soc^\ell A)/\soc^\ell A)\\
    &=& \hom_\Lambda(P^\ell,A) + \hom_\Lambda(P^r,B/\soc^\ell A)
  \end{eqnarray*}
        For the first two equalities note that
        each $f\in\Hom_{\mathcal S}(P_\ell^m,(A\subset B))$ is given by an element
        $b\in B$ such that $p^rb\in A$ and $p^{r+\ell}b=0$.
        Since $p^{-r}_B(\soc ^\ell A)$ is a $\Lambda/(p^m)$-module,
        the next equality follows from the exactness of the Hom-functor.
        For the last step note that each $f\in\Hom_\Lambda(P^s,C)$
        is given by an element in $p^{-s}_C(0)$.
\end{proof}

We can now show the first equality.
\begin{eqnarray*}
  \mu_\Sigma(\boxedentry3\ell r) & = &
  (\sigma^{(\ell-1)})'_r-(\sigma^{(\ell)})'_r\\
  &=& \len\soc^r(B/\soc^{\ell-1}A) -\len\soc^{r-1}(B/\soc^{\ell-1}A)\\
  && {}-\len\soc^r(B/\soc^\ell A) +\len\soc^{r-1}(B/\soc^\ell A)\\
  &=& \hom(P^r,B/\soc^{\ell-1}A) -\hom(P^{r-1},B/\soc^{\ell-1}A)\\
  && {}-\hom(P^r,B/\soc^\ell A) +\hom(P^{r-1},B/\soc^\ell A)\\
  &=& \hom(P^r,B/\soc^{\ell-1}A)+\hom(P^{\ell-1},A)\\
  && {}-\hom(P^{r-1},B/\soc^{\ell-1}A)-\hom(P^{\ell-1},A)\\
  &&{}-\hom(P^r,B/\soc^\ell A)-\hom(P^\ell,A)\\
  && {}+\hom(P^{r-1},B/\soc^\ell A)+\hom(P^\ell,A)\\
  &=& \hom_{\mathcal S}(P^{m-1}_{\ell-1},(A\subset B))
  - \hom_{\mathcal S}(P^{m-2}_{\ell-1},(A\subset B))\\
  &&{} - \hom_{\mathcal S}(P^{m}_{\ell},(A\subset B))
  + \hom_{\mathcal S}(P^{m-1}_{\ell},(A\subset B))\\
  &=& h_{\ell-1}^{m-1}-h_{\ell-1}^{m-2}-h_{\ell}^m+h_\ell^{m-1}
\end{eqnarray*}

\begin{cor}
  \label{cor-cok-hom}
  Let $\ell,r$ be natural numbers, $m=\ell+r$ and $f_\ell^m$ the
  monomorphism in the short exact sequence (where we put $P_0^0=0$).
  $$0\longrightarrow P_\ell^{m-1}\stackrel{f_\ell^m}\longrightarrow
  P_\ell^m\oplus P_{\ell-1}^{m-2}\longrightarrow
  P_{\ell-1}^{m-1}\longrightarrow 0$$
  If $\Sigma$ is the socle tableau of the embedding $X\in\mathcal S(\Lambda)$,
  then
  $$\mu_\Sigma(\boxedentry 3\ell r)=\len\Cok\Hom_{\mathcal S}(f_\ell^m,X).$$
\end{cor}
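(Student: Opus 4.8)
The plan is to upgrade the numerical formula for $\mu_\Sigma(\boxedentry3\ell r)$ obtained in the first part of this subsection to the stated functorial description, by applying the contravariant functor $\Hom_{\mathcal S}(-,X)$ to the displayed short exact sequence of pickets and reading off the length of the cokernel from an alternating-length count.

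First I would record that the displayed sequence is genuinely a short exact sequence (a conflation) in $\mathcal S(\Lambda)$. On ambient modules it is the standard sequence $0\to P^{m-1}\to P^m\oplus P^{m-2}\to P^{m-1}\to 0$, and it restricts on subobjects to $0\to\soc^\ell P^{m-1}\to\soc^\ell P^m\oplus\soc^{\ell-1}P^{m-2}\to\soc^{\ell-1}P^{m-1}\to 0$; in each case the first map is injective, the last is surjective, the composite is zero, and the lengths add up, so both rows are exact and the sequence of embeddings is a conflation. The conventions $P_0^0=0$, and $P_{\ell-1}^{m-2}=0$ when $m-2<\ell-1$, cover the boundary values $\ell=1$ and $r=1$.

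Applying $\Hom_{\mathcal S}(-,X)$, which takes values in finite length $\Lambda$-modules and is left exact on conflations, we obtain the exact sequence of $\Lambda$-modules
$$0\to\Hom_{\mathcal S}(P_{\ell-1}^{m-1},X)\to\Hom_{\mathcal S}(P_\ell^m\oplus P_{\ell-1}^{m-2},X)\to\Hom_{\mathcal S}(P_\ell^{m-1},X)\to\Cok\Hom_{\mathcal S}(f_\ell^m,X)\to 0,$$
where exactness at the two middle terms is left exactness of $\Hom_{\mathcal S}(-,X)$ and at the last term is the definition of the cokernel. Since all four nonzero modules have finite length, the alternating sum of their lengths vanishes, and recalling $h_\ell^m=\hom_{\mathcal S}(P_\ell^m,X)$ this yields
$$\len\Cok\Hom_{\mathcal S}(f_\ell^m,X)=h_\ell^{m-1}-h_\ell^m-h_{\ell-1}^{m-2}+h_{\ell-1}^{m-1}.$$
By the first formula of this subsection the right-hand side equals $\mu_\Sigma(\boxedentry3\ell r)$, which is the assertion.

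I do not anticipate a real obstacle here — this is essentially a one-line consequence of what precedes it. The only points that deserve care are the exact-category bookkeeping in $\mathcal S(\Lambda)$, namely that the relevant exact structure has as its conflations exactly the sequences that are componentwise short exact on subobjects and on ambient modules, and that $\Hom_{\mathcal S}(-,X)$ is left exact for it; and the routine verification that the displayed sequence of pickets really is such a conflation (done in the first step). Note in particular that no $\Ext^1$-terms intervene: the four-term sequence above is precisely the initial segment of the long exact Ext-sequence, truncated at the cokernel of $\Hom_{\mathcal S}(f_\ell^m,X)$, so no vanishing of higher extensions is needed.
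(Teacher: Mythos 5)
Your proposal is correct and is exactly the argument the paper intends: the corollary is stated as an immediate consequence of the formula $\mu_\Sigma(\boxedentry3\ell r)=h_\ell^{m-1}-h_\ell^m-h_{\ell-1}^{m-2}+h_{\ell-1}^{m-1}$, obtained by applying the contravariant functor $\Hom_{\mathcal S}(-,X)$ to the componentwise exact sequence of pickets and taking alternating lengths, just as in the parallel use of \cite[Theorem~2]{s-lrtableau} in the next subsection. Your added checks (that the picket sequence is componentwise exact and that $\Hom_{\mathcal S}(-,X)$ is left exact for such conflations) are sound and fill in the routine details the paper leaves implicit.
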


  \begin{rem}
    In the case where $\Lambda$ is bounded, for example if $\Lambda=k[T]/(T^n)$
    or $\Lambda=\mathbb Z/(p^n)$, and $n=m-1$
    then the summand $P_\ell^m$ in the
    middle term of the exact sequence in Corollary~\ref{cor-cok-hom}
    is not defined.  In this case, 
    $$\mu_\Sigma(\boxedentry 3\ell r)=\len\Cok\Hom_{\mathcal S}(f_\ell^m,X)$$
    where $f_\ell^m$ is the canonical map $P_\ell^{m-1}\to P_{\ell-1}^{m-2}$.
  \end{rem}

\bigskip
The second formula is shown by the following equations.
\begin{eqnarray*}
  h_\ell^m &=& \hom_{\mathcal S}(P_\ell^m,(A\subset B))\\
  &=& \hom(P_\ell^m, (\soc^\ell A\subset B))\\
  &=& \hom(P_\ell^m, (\soc^\ell A\subset p^{-(m-\ell)}_B(\soc^\ell A)))\\
  &=& \hom_\Lambda(P^m,p_B^{-(m-\ell)}(\soc^\ell A))\\
  &=& \len p_B^{-(m-\ell)}(\soc^\ell A)\\
  &=& \len \soc^\ell A + \len \soc^{m-\ell}(B/\soc^\ell A)\\
  &=& \alpha'_1+\cdots+\alpha'_\ell\;+\;
  (\sigma^{(\ell)})'_1+\cdots+(\sigma^{(\ell)})'_{m-\ell}\\
\end{eqnarray*}

\subsection{The dual LR-tableau and the Hom-matrix}

Our aim in this subsection is to verify two formulas.
$$\mu_{\Gamma^*}(\boxedentry 3\ell m)=
\left\{\begin{array}{ll}
h_r^m-h_{r+1}^m-h_{r-1}^{m-1}+h_r^{m-1} & \text{if}\; \ell<m \\
h_0^m - h_1^m & \text{if}\; \ell=m
\end{array}\right.
$$

\medskip
We write $m=r+\ell$ as above.  Conversely, we can obtain $h_r^m$
from the tableau $\Gamma^*=(\lambda^{(\ell)})_\ell$ via
$$h_r^m=(\lambda^{(\ell)})'_1+\cdots+(\lambda^{(\ell)})'_m.$$

\bigskip
The first formula follows from \cite[Theorem~2]{s-lrtableau}:
$$\mu_{\Gamma^*}(\boxedentry3\ell m)=\len\Cok\Hom_{\mathcal S}(\tilde h^m_r,X)$$
where $\tilde h^m_r$ is the map
$$\tilde h^m_r:\left\{\begin{array}{ll}
P_0^m\to P_1^m, & \text{if}\;r=0\\
P_r^m\to P_{r-1}^{m-1}\oplus P_{r+1}^m, & \text{if}\; 1\leq r<m\\
P_m^m\to P_{m-1}^{m-1}, & \text{if}\; r=m.
\end{array}\right.
$$
In the case where $\ell<m$ we have $1\leq r<m$ and the formula is obtained
by applying the contravariant Hom-functor to the short exact sequence
$$0\longrightarrow P_r^m\stackrel{\tilde h_r^m}\longrightarrow
P_{r-1}^{m-1}\oplus P_{r+1}^m\longrightarrow P_r^{m-1}\longrightarrow 0.$$
If $\ell =m$ we have $r=0$.  Here we can use the short exact sequence
$$0\longrightarrow\Hom(P_1^m,X)\longrightarrow\Hom(P_0^m,X)
\longrightarrow\Cok\Hom(\tilde h_0^m,X)\longrightarrow 0.$$

\bigskip
For the proof of the second formula we use

\begin{lem}\label{hom-dual}
  Suppose the embedding $X=(A\subset B)$
  has LR-tableau $\Gamma=(\gamma^{(i)})_i$ (so $\gamma^{(i)}$ is the type
  of $B/p^iA$).
  Then
  $$\hom_{\mathcal S}(Y,P_\ell^m)=(\gamma^{(\ell)})'_1+\cdots+(\gamma^{(\ell)})'_m.$$
\end{lem}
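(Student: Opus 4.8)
The plan is to reduce the computation of $\hom_{\mathcal S}(X,P_\ell^m)$ to a computation of homomorphisms of $\Lambda$-modules, namely to prove
$$\hom_{\mathcal S}\big(X,P_\ell^m\big)=\hom_\Lambda\big(B/\rad^\ell A,\,P^m\big),$$
and then to read off the length of the right-hand side from the Young diagram of $\gamma^{(\ell)}=\type(B/\rad^\ell A)$.

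For the first step I would unwind the definition of $\mathcal S(\Lambda)$: a morphism $X=(A\subset B)\to P_\ell^m=(\soc^\ell P^m\subset P^m)$ is a $\Lambda$-linear map $\phi\colon B\to P^m$ with $\phi(A)\subseteq\soc^\ell P^m$. Since $P^m$ is cyclic, $\soc^\ell P^m$ is precisely the $p^\ell$-torsion submodule $\{x\in P^m:p^\ell x=0\}$, and $\rad^\ell A=p^\ell A$; hence $\phi(A)\subseteq\soc^\ell P^m$ if and only if $p^\ell\phi(A)=0$, i.e.\ $\phi(\rad^\ell A)=0$, i.e.\ $\phi$ factors through the canonical epimorphism $B\to B/\rad^\ell A$. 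Such factorizations correspond $\Lambda$-linearly and bijectively to elements of $\Hom_\Lambda(B/\rad^\ell A,P^m)$, so $\Hom_{\mathcal S}(X,P_\ell^m)$ and $\Hom_\Lambda(B/\rad^\ell A,P^m)$ coincide as $\Lambda$-submodules of $\Hom_\Lambda(B,P^m)$, and in particular have the same length.

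For the second step I would establish the elementary formula $\hom_\Lambda(U,P^m)=\lambda'_1+\cdots+\lambda'_m$ for any $\Lambda$-module $U$ of type $\lambda$. Decomposing $U=\bigoplus_i\Lambda/(p^{\lambda_i})$ and using that a homomorphism $\Lambda/(p^a)\to\Lambda/(p^m)$ is determined by the image of the generator, which may be any element annihilated by $p^a$, one gets $\hom_\Lambda(\Lambda/(p^a),P^m)=\min(a,m)$; by additivity $\hom_\Lambda(U,P^m)=\sum_i\min(\lambda_i,m)$, and this sum counts exactly the boxes of the Young diagram of $\lambda$ in the first $m$ rows, i.e.\ $\lambda'_1+\cdots+\lambda'_m$ (equivalently, $\len\soc^m U$). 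Applying this with $U=B/\rad^\ell A$ and $\lambda=\gamma^{(\ell)}$ gives the lemma.

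I do not expect a serious obstacle; the only point that needs care is the first step, and there the whole content is that $\soc^\ell P^m$ is the $p^\ell$-torsion of the cyclic module $P^m$, so that the morphism condition $\phi(A)\subseteq\soc^\ell P^m$ is the same as the factorization condition $\phi(\rad^\ell A)=0$. (Alternatively one could obtain $\hom_\Lambda(U,P^m)=\len\soc^m U$ from the duality $D=\Hom_\Lambda(-,E)$ together with the identification $P^m\cong\soc^m E$, but the direct decomposition above is shorter and self-contained.)
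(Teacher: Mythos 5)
Your proposal is correct and follows essentially the same route as the paper: both reduce $\hom_{\mathcal S}(X,P_\ell^m)$ to $\hom_\Lambda(B/p^\ell A,P^m)$ (you do this by observing directly that a morphism is a map $\phi\colon B\to P^m$ with $\phi(p^\ell A)=0$, the paper by passing through the quotient embedding $(A/p^\ell A\subset B/p^\ell A)$) and then count boxes via $\hom_\Lambda(U,P^m)=\len\soc^m U=\lambda'_1+\cdots+\lambda'_m$. Your write-up simply supplies the justifications the paper leaves implicit.
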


\begin{proof}
  With the notation from the Lemma we have:
  \begin{eqnarray*}
    \hom_{\mathcal S}(X,P_\ell^m) & = &
    \hom_{\mathcal S}((A/p^\ell A\subset B/p^\ell A),P_\ell^m)\\
    &=& \hom_\Lambda(B/p^\ell A,P^m)\\
    &=& (\gamma^{(\ell)})'_1+\cdots+(\gamma^{(\ell)})'_m.
  \end{eqnarray*}
\end{proof}

We can now show the second formula:
\begin{lem}
  \label{lemma-dual-lr}
  Suppose the embedding $X=(A\subset B)$ has dual LR-tableau
  $\Gamma^*=(\lambda^{(\ell)})_\ell$.  Let $m$ be a natural number,
  $0\leq r\leq m$ an integer and $\ell=m-r$. Then
  $$h_r^m=(\lambda^{(\ell)})'_1+\cdots+(\lambda^{(\ell)})'_m.$$
\end{lem}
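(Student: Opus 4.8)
I want to prove that $h_r^m = (\lambda^{(\ell)})'_1 + \cdots + (\lambda^{(\ell)})'_m$ where $\ell = m-r$ and $\Gamma^* = (\lambda^{(\ell)})_\ell$ is the dual LR-tableau of $X = (A\subset B)$. The key is that $\Gamma^*$ is by definition the LR-tableau of the dual embedding $X^* = DX$, so its partition sequence $(\lambda^{(\ell)})_\ell$ records the types $\lambda^{(\ell)} = \type\big(DB/p^\ell(DC)\big)$ where $DX = (DC \subset DB)$ and the submodule is $\Hom_\Lambda(\pi, E)(DC)$ with $\pi: B\to C = B/A$. So I should rewrite the right-hand side as a homomorphism length for the dual embedding and then transport the statement back to $X$ via duality.

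**Main steps.** First, I would apply Lemma~\ref{hom-dual} to the dual embedding $X^*$ in place of $X$: since $X^*$ has LR-tableau $\Gamma^* = (\lambda^{(\ell)})_\ell$, the lemma gives $\hom_{\mathcal S}(X^*, P_\ell^m) = (\lambda^{(\ell)})'_1 + \cdots + (\lambda^{(\ell)})'_m$. So it suffices to prove the identity $h_r^m = \hom_{\mathcal S}(X^*, P_\ell^m)$, i.e.
$$\hom_{\mathcal S}(P_r^m, X) = \hom_{\mathcal S}(X^*, P_\ell^m), \qquad \ell = m - r.$$
Second, I would identify the right-hand side via the contravariant duality $D$ on $\mathcal S(\Lambda)$. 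Applying $D$ to a morphism $X^* \to P_\ell^m$ gives a morphism $D(P_\ell^m) \to D(X^*) = X$ (since $DD \cong \mathrm{id}$), and $D$ is a contravariant equivalence, hence induces an isomorphism of the Hom-groups
$$\Hom_{\mathcal S}(X^*, P_\ell^m) \;\xrightarrow{\;\sim\;}\; \Hom_{\mathcal S}\big(D(P_\ell^m), X\big).$$
Third, I must check that $D(P_\ell^m) \cong P_r^m$ with $r = m-\ell$: the picket $P_\ell^m = (\soc^\ell P^m \subset P^m)$ has partition type $((\ell),(m),(m-\ell))$, so its dual embedding has type $((m-\ell),(m),(\ell))$, and since pickets are the only indecomposables with ambient space $P^m$ of this type (the submodule of $P^m$ of length $r = m-\ell$ is unique), $D(P_\ell^m) = P_{m-\ell}^m = P_r^m$. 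Combining, $\hom_{\mathcal S}(P_r^m, X) = \hom_{\mathcal S}(X^*, P_\ell^m) = (\lambda^{(\ell)})'_1 + \cdots + (\lambda^{(\ell)})'_m$, which is the claim.

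**The main obstacle.** The delicate point is the bookkeeping in step three: one has to make sure the index shift is exactly $\ell \mapsto r = m-\ell$ and that it is consistent with the convention (recorded in the Duality subsection) that dualizing an embedding of type $(\alpha,\beta,\gamma)$ gives one of type $(\gamma,\beta,\alpha)$ — here $P_\ell^m$ has $\alpha = (\ell)$, $\gamma = (m-\ell)$, so the dual has submodule of length $m-\ell = r$, as needed. A secondary routine check is that the duality $D$ genuinely restricts to a contravariant self-equivalence of $\mathcal S(\Lambda)$ intertwining $\Hom_{\mathcal S}$, which is standard (Matlis duality on $\Lambda$-modules carries over to embeddings via the definition $X^* = (\Hom(\pi,E): DC \hookrightarrow DB)$), and that $DD X \cong X$. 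Once these identifications are in place the proof is a two-line chain of equalities; alternatively, one could bypass duality and argue directly by comparing the socle filtration of $A$ inside $B$ (which controls $\hom_{\mathcal S}(P_r^m, X)$ via the Lemma in Subsection~\ref{subsec-one}) with the radical filtration of the dual submodule, but the duality argument is cleaner.
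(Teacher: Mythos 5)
Your proposal is correct and follows essentially the same route as the paper: both arguments apply the duality $D$ to convert $\hom_{\mathcal S}(P_r^m,X)$ into $\hom_{\mathcal S}(DX,DP_r^m)$, identify $DP_r^m\cong P_{m-r}^m$ (your step three, with the same index bookkeeping), and then invoke Lemma~\ref{hom-dual} for the dual embedding. The only difference is presentational -- you apply the duality isomorphism in the opposite direction -- so nothing further is needed.
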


\begin{proof}
  \begin{eqnarray*}
    h_r^m & = & \hom_{\mathcal S}(P_r^m,X)\\
    &=& \hom_{\mathcal S}(DX,DP_r^m)\\
    &=& \hom(DX,P_{m-r}^m)\\
    &=& (\lambda^{(m-r)})'_1+\cdots+(\lambda^{(m-r)})'_m
  \end{eqnarray*}
  where the last step follows from Lemma~\ref{hom-dual}.
\end{proof}

\subsection{The socle tableau and the dual LR-tableau}
\label{sec-lr-soc}
We show two formulas which show how to obtain the socle tableau
from the dual LR-tableau and conversely.  Of course, this can be done
via the Hom-matrix.

\begin{prop}
  Suppose an embedding has socle tableau $\Sigma$
  and the dual embedding has  LR-tableau
  $\Gamma^*=(\lambda^{(i)})_i$.  If $\ell,r$ are natural numbers and $m=\ell+r$,
  we have
  $$\mu_\Sigma(\boxedentry 3\ell r) = (\lambda^{(r-1)})'_{m-1}-(\lambda^{(r)})'_m.$$
\end{prop}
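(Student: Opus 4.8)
The plan is to combine the two groups of formulas already established in this section, one linking the socle tableau to the Hom-matrix (Subsection~\ref{subsec-one}) and one linking the dual LR-tableau to the Hom-matrix (Lemma~\ref{lemma-dual-lr}), and then to simplify the resulting expression. First I would record the formula for the multiplicities of the socle tableau in ``cumulative'' form: from Subsection~\ref{subsec-one}, for $m=\ell+r$ we have
\[
\mu_\Sigma(\boxedentry 3\ell r)=h_\ell^{m-1}-h_\ell^m-h_{\ell-1}^{m-2}+h_{\ell-1}^{m-1}.
\]
Next I would rewrite each $h$-entry appearing here using Lemma~\ref{lemma-dual-lr}, namely $h_t^n=(\lambda^{(n-t)})'_1+\cdots+(\lambda^{(n-t)})'_n$, taking care that the superscript of $\lambda$ is $n-t$: for $h_\ell^{m-1}$ the index is $(m-1)-\ell=r-1$; for $h_\ell^m$ it is $m-\ell=r$; for $h_{\ell-1}^{m-2}$ it is $(m-2)-(\ell-1)=r-1$; for $h_{\ell-1}^{m-1}$ it is $(m-1)-(\ell-1)=r$. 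So only the two partitions $\lambda^{(r-1)}$ and $\lambda^{(r)}$ occur, which is exactly what the statement predicts.

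The computation then telescopes. Writing $S_k(\nu)=\nu'_1+\cdots+\nu'_k$ for a partition $\nu$, the right-hand side becomes
\[
S_{m-1}(\lambda^{(r-1)})-S_m(\lambda^{(r)})-S_{m-2}(\lambda^{(r-1)})+S_{m-1}(\lambda^{(r)}),
\]
and the first and third terms combine to $(\lambda^{(r-1)})'_{m-1}$, while the second and fourth combine to $-(\lambda^{(r)})'_m$. This gives precisely $\mu_\Sigma(\boxedentry 3\ell r)=(\lambda^{(r-1)})'_{m-1}-(\lambda^{(r)})'_m$, as claimed. I would also note the boundary cases: when $r=1$ the partition $\lambda^{(r-1)}=\lambda^{(0)}=\alpha$ (the shape parameter) and the convention $P_0^0=0$ from Corollary~\ref{cor-cok-hom} makes the corresponding $h$-terms behave correctly; when $\ell=1$ one has $h_{\ell-1}^{\bullet}=h_0^{\bullet}$, which by the Hom-formula in Subsection~\ref{subsec-one} equals $\len B$ restricted appropriately, and again the telescoping is unaffected.

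I do not expect a genuine obstacle here; the proposition is essentially a bookkeeping corollary of the two conversion formulas proved earlier in the section. The one point that needs care — and the only place a sign or index error could creep in — is matching the superscripts of $\lambda$ correctly when substituting $h_t^n=S_n(\lambda^{(n-t)})$ into the four-term alternating sum, and making sure the truncation indices $m$, $m-1$, $m-2$ line up so that the sum collapses to a difference of just two single row-lengths. An alternative, slightly more conceptual route would be to go directly through Corollary~\ref{cor-cok-hom}, expressing $\mu_\Sigma(\boxedentry 3\ell r)=\len\Cok\Hom_{\mathcal S}(f_\ell^m,X)$ and then dualizing the short exact sequence defining $f_\ell^m$ to relate it to the maps $\tilde h_r^m$ from the dual LR-side; but the straightforward substitution-and-telescope argument is shorter and self-contained, so that is the one I would write up.
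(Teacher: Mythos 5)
Your proposal is correct and follows exactly the paper's own argument: substitute the four-term Hom-matrix formula for $\mu_\Sigma(\boxedentry 3\ell r)$ from Subsection~\ref{subsec-one} into Lemma~\ref{lemma-dual-lr}'s expression $h_t^n=(\lambda^{(n-t)})'_1+\cdots+(\lambda^{(n-t)})'_n$, with the superscripts $r-1,r,r-1,r$ matched as you describe, and let the sums telescope to $(\lambda^{(r-1)})'_{m-1}-(\lambda^{(r)})'_m$. The paper's proof is precisely this substitution-and-cancellation computation, so no further comment is needed.
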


\begin{proof}
The formula follows from the first statement in subsection~\ref{subsec-one}
and Lemma~\ref{lemma-dual-lr}:
\begin{eqnarray*}
  \mu_\Sigma(\boxedentry 3\ell r)& = &
  h_\ell^{m-1}-h_\ell^m-h_{\ell-1}^{m-2}+h_{\ell-1}^{m-1}\\
  & = & (\lambda^{(r-1)})'_1+\ldots+(\lambda^{(r-1)})'_{m-1}
  - (\lambda^{(r)})'_1-\ldots-(\lambda^{(r)})'_m\\
  && {} - (\lambda^{(r-1)})'_1-\ldots-(\lambda^{(r-1)})_{m-2}
  {} + (\lambda^{(r)})'_1+\ldots+(\lambda^{(r)})'_{m-1}\\
  & = & (\lambda^{(r-1)})'_{m-1}-(\lambda^{(r)})'_m
\end{eqnarray*}
\end{proof}

The second formula summarizes how to retrieve the entries in $\Gamma^*$
from $\Sigma$.
$$\mu_{\Gamma^*}(\boxedentry3\ell m) =
\left\{\begin{array}{ll}
0 & \text{if} \; m<\ell \\
\displaystyle \beta'_m-\sum_{j\geq m}\mu_\Sigma(\boxedentry31j)
& \text{if}\; m=\ell\\
\displaystyle \sum_{j>\ell}\mu_\Sigma(\boxedentry7{m-\ell}j)
- \sum_{j\geq\ell}\mu_\Sigma(\boxedentry{11}{m-\ell-1}j)
& \text{if} \; m>\ell
\end{array}\right.
$$

Note that the first sum in the last line counts the number of entries
$r=m-\ell$ underneath the $\ell$-th row, while the second sum counts
the entries $m-\ell+1$ in the $\ell$-th row and underneath.
We observe that by (ST-3$'$) the difference is nonnegative.

\medskip
The formula follows from the rule
$\mu_{\Gamma^*}(\boxedentry3\ell m) = (\lambda^{(\ell)})'_m-(\lambda^{(\ell-1)})'_m$
and the following

\begin{lem}
  $$(\lambda^{(\ell)})'_m=\left\{
  \begin{array}{ll}
    \beta'_m & \text{if}\; m\leq \ell \\
    \sum_{j>\ell} \mu_\Sigma(\boxedentry7{m-\ell}j) & \text{if}\; m>\ell
  \end{array}
  \right.
  $$
\end{lem}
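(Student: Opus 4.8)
The plan is to derive both cases from results already in hand: Lemma~\ref{lemma-dual-lr}, which expresses partial row–sums of the partitions $\lambda^{(i)}$ of $\Gamma^*$ through the Hom-matrix, together with the two conversion formulas of Subsection~\ref{subsec-one} between the Hom-matrix and the socle tableau $\Sigma=(\sigma^{(i)})_i$ of the embedding $X=(A\subset B)$. No new module theory is needed; the work is index bookkeeping, and the one subtlety is keeping track of the range $0\le r\le m$ in which Lemma~\ref{lemma-dual-lr} applies.

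First I would dispose of the case $m\le\ell$. Applying Lemma~\ref{lemma-dual-lr} with $r=0$ gives $h_0^m=(\lambda^{(m)})'_1+\cdots+(\lambda^{(m)})'_m$, while a direct computation gives $h_0^m=\hom_{\mathcal S}(P_0^m,X)=\hom_\Lambda(P^m,B)=\len\soc^mB=\beta'_1+\cdots+\beta'_m$. Since $\lambda^{(m)}\subseteq\beta$, the inequalities $(\lambda^{(m)})'_i\le\beta'_i$ hold term by term, so equality of the first $m$ partial sums forces $(\lambda^{(m)})'_i=\beta'_i$ for $1\le i\le m$. As $\Gamma^*$ is an increasing sequence of partitions, $\lambda^{(m)}\subseteq\lambda^{(\ell)}\subseteq\beta$ whenever $\ell\ge m$, and therefore $\beta'_m=(\lambda^{(m)})'_m\le(\lambda^{(\ell)})'_m\le\beta'_m$, i.e. $(\lambda^{(\ell)})'_m=\beta'_m$. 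This also covers the boundary value $m=\ell$, which the Hom-matrix argument below cannot reach.

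For the case $m>\ell$, I would read Lemma~\ref{lemma-dual-lr} first with $r=m-\ell$ (so the superscript on $\lambda$ is $\ell$ and there are $m$ summands), then with $r=m-\ell-1$ and $m$ replaced by $m-1$ (again superscript $\ell$, now $m-1$ summands); both subscripts lie in the admissible range since $m>\ell$. Subtracting, $(\lambda^{(\ell)})'_m=h_{m-\ell}^m-h_{m-\ell-1}^{m-1}$. Now I substitute the second formula of Subsection~\ref{subsec-one}, $h_a^b=\alpha'_1+\cdots+\alpha'_a+(\sigma^{(a)})'_1+\cdots+(\sigma^{(a)})'_{b-a}$, into both terms. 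Since $b-a=\ell$ in each case, the $\alpha'$-parts telescope to the single term $\alpha'_{m-\ell}$ and the $\sigma$-parts telescope to $\sum_{j=1}^{\ell}\bigl((\sigma^{(m-\ell)})'_j-(\sigma^{(m-\ell-1)})'_j\bigr)$. By the first identity of Subsection~\ref{subsec-one}, $(\sigma^{(m-\ell-1)})'_j-(\sigma^{(m-\ell)})'_j=\mu_\Sigma(\boxedentry7{m-\ell}j)$, so this sum equals $-\sum_{j\le\ell}\mu_\Sigma(\boxedentry7{m-\ell}j)$, yielding $(\lambda^{(\ell)})'_m=\alpha'_{m-\ell}-\sum_{j\le\ell}\mu_\Sigma(\boxedentry7{m-\ell}j)$. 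Finally, $\alpha'_{m-\ell}$ is the total number of boxes with entry $m-\ell$ in $\Sigma$ (recorded in Section~\ref{sec-soc-tab}), that is $\sum_{j\ge1}\mu_\Sigma(\boxedentry7{m-\ell}j)$; subtracting off the rows $j\le\ell$ leaves exactly $\sum_{j>\ell}\mu_\Sigma(\boxedentry7{m-\ell}j)$, the asserted value.

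The main obstacle, such as it is, is purely organizational: choosing the substitution so that the superscript of $\lambda^{(\cdot)}$ (equivalently the quantity $b-a$ after using the Hom-matrix formula) is the same in the two $h$-terms, so that the $\alpha'$- and $\sigma$-contributions collapse cleanly, and separating off $m\le\ell$ so as not to invoke Lemma~\ref{lemma-dual-lr} outside its range. It is also prudent to check that the degenerate conventions ($\alpha'_0$ an empty sum, $\sigma^{(0)}=\beta$, $P_0^0=0$) are consistent with the formulas at the boundary, but these cause no trouble.
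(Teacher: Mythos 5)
Your proof is correct, and it is organized differently from the paper's, although both run on the same underlying machinery. For $m>\ell$ the paper telescopes the first formula of Subsection~\ref{sec-lr-soc} (the Proposition $\mu_\Sigma=(\lambda^{(r-1)})'_{m-1}-(\lambda^{(r)})'_m$), descending from $(\lambda^{(\ell)})'_m$ to $(\lambda^{(0)})'_{m-\ell}=\alpha'_{m-\ell}$ and collecting one multiplicity per step; you instead subtract two instances of Lemma~\ref{lemma-dual-lr} to get $(\lambda^{(\ell)})'_m=h^m_{m-\ell}-h^{m-1}_{m-\ell-1}$ and then convert via the second formula of Subsection~\ref{subsec-one} and the identity $\mu_\Sigma=(\sigma^{(\ell-1)})'_r-(\sigma^{(\ell)})'_r$ (which, strictly speaking, is the opening line of that subsection's derivation, i.e.\ the definition of the socle tableau, rather than its displayed formula). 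Since the paper's Proposition was itself obtained from those Hom-matrix identities, your computation essentially inlines what the telescoping packages, so this case buys directness at the cost of re-deriving rather than reusing the Proposition. For $m\le\ell$ the paper argues in one line combinatorially: columns of an LR-tableau are strictly increasing, so rows $1,\dots,\ell$ contain no entry exceeding $\ell$, hence the first $\ell$ rows of $\lambda^{(\ell)}$ and $\beta$ agree. Your alternative --- $h_0^m=\len\soc^mB=\beta'_1+\cdots+\beta'_m$ together with Lemma~\ref{lemma-dual-lr} at $r=0$ forces $(\lambda^{(m)})'_i=\beta'_i$ for $i\le m$, then squeeze using $\lambda^{(m)}\subseteq\lambda^{(\ell)}\subseteq\beta$ --- is also valid, but note that it tacitly invokes the convention $\lambda^{(i)}=\beta$ for $i$ beyond the largest entry of $\Gamma^*$ (harmless, and consistent with Lemma~\ref{hom-dual}, where $\gamma^{(\ell)}$ is the type of $B/p^\ell A$ for arbitrary $\ell$); the paper's observation avoids this and is more economical. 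No gap in either case.
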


\begin{proof}
  Note that the first $\ell$ rows of $\beta$
  and $\lambda^{(\ell)}$ are equal (because there is no entry
  bigger than $\ell$ in the first $\ell$ rows of an~LR-tableau), hence 
  $(\lambda^{(\ell)})'_m=\beta_m'$ for all $m\leq \ell$.
  Now, we use the first formula in this subsection repeatedly:
  \begin{eqnarray*}
    (\lambda^{(\ell)})'_m & = &
    (\lambda^{(\ell-1)})'_{m-1}-\mu_\Sigma(\boxedentry7{m-\ell}\ell)\\
    & = & (\lambda^{(\ell-2)})'_{m-2}-\mu_\Sigma(\boxedentry7{m-\ell}{\ell-1}\quad)
    -\mu_\Sigma(\boxedentry7{m-\ell}\ell)\\
    & = & \cdots \\
    & = & (\lambda^{(0)})'_{m-\ell}
    -\textstyle\sum_{j=1}^\ell \mu_\Sigma(\boxedentry7{m-\ell}j)\\
    & = & \alpha'_{m-\ell}
    -\textstyle\sum_{j=1}^\ell \mu_\Sigma(\boxedentry7{m-\ell}j)\\
    & = & \textstyle\sum_{j>\ell} \mu_\Sigma(\boxedentry7{m-\ell}j)
  \end{eqnarray*}
\end{proof}

As a~consequence we obtain the following characterisation
of the Littlewood-Richardson coefficient $c_{\alpha,\gamma}^\beta$.

\begin{cor}\label{cor-LR-coeff}
  The number $c_{\alpha,\gamma}^\beta$ equals the number of socle tableaux
  of shape $(\alpha,\beta,\gamma)$. \qed
\end{cor}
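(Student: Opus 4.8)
The plan is to deduce the count from the classical enumeration of LR-tableaux together with the symmetry of the LR-coefficient, the link being an explicit bijection between socle tableaux of shape $(\alpha,\beta,\gamma)$ and LR-tableaux of shape $(\gamma,\beta,\alpha)$. Recall it is classical that the number of LR-tableaux of shape $(\gamma,\beta,\alpha)$ is $c_{\gamma,\alpha}^\beta$, and that $c_{\gamma,\alpha}^\beta=c_{\alpha,\gamma}^\beta$; so everything reduces to exhibiting a bijection
\[
  \Phi\colon\ \{\,\text{socle tableaux of shape }(\alpha,\beta,\gamma)\,\}\ \longrightarrow\ \{\,\text{LR-tableaux of shape }(\gamma,\beta,\alpha)\,\}.
\]

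First I would let $\Phi$ be the assignment $\Sigma\mapsto\Gamma^*$ described by the combinatorial rules of Subsection~\ref{sec-lr-soc}, with candidate inverse $\Psi\colon\Gamma^*\mapsto\Sigma$ from the same subsection. Two points must be checked: that $\Phi$ takes values among bona fide LR-tableaux (and $\Psi$ among bona fide socle tableaux), and that $\Phi$ and $\Psi$ are mutually inverse. The cleanest route settles both at once via realization. By Theorem~\ref{thm-kleinST} every socle tableau $\Sigma$ of shape $(\alpha,\beta,\gamma)$ equals $\Sigma_X$ for some embedding $X$; then $X^*$ is an embedding of type $(\gamma,\beta,\alpha)$, its LR-tableau $\Gamma^*_X$ is a genuine LR-tableau by the results of Section~\ref{sec-soc-tab} applied to $X^*$, and by Theorem~\ref{thm-lr-soc-hom} it depends only on $\Sigma$ and in turn determines $\Sigma$. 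Since $X\mapsto X^*$ is an involutive bijection on isomorphism classes of embeddings that interchanges the two types, this already gives that $\Phi$ is well-defined and injective; for surjectivity one invokes either the full --- not merely existential --- form of Klein's realization theorem for LR-tableaux, or, staying self-contained, the fact (implicit in the nonnegativity observations of Subsection~\ref{sec-lr-soc}) that $\Psi$ outputs valid socle tableaux, whence $\Phi\circ\Psi=\mathrm{id}$ by Theorem~\ref{thm-lr-soc-hom}. Composing $\Phi$ with the two classical equalities above yields Corollary~\ref{cor-LR-coeff}.

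The step I expect to be the main obstacle is the combinatorial bookkeeping that makes $\Phi$ and $\Psi$ well-defined and mutually inverse without passing through embeddings: one must check that the rule expressing $\Gamma^*$ in terms of $\Sigma$ produces the parts of a partition sequence and respects (LR-1)--(LR-3), and vice versa, where condition (ST-3$'$) is precisely the input that guarantees the relevant differences are nonnegative. Theorem~\ref{thm-lr-soc-hom} turns this into a formal consequence of ``each invariant determines the others,'' but at the cost of importing the strong realization statement for LR-tableaux; the purely combinatorial alternative avoids that import at the cost of this verification. In neither case is there a risk of circularity. Finally, as the introduction and the tableau-switching conjecture indicate, one should not expect $\Phi$ to reduce to an entrywise-local rule: no ``natural'' correspondence between socle and LR-tableaux of the \emph{same} shape is known.
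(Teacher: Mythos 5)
Your argument is correct and is essentially the paper's own: the corollary is stated there as an immediate consequence of the correspondence $\Sigma\leftrightarrow\Gamma^*$ between socle tableaux of shape $(\alpha,\beta,\gamma)$ and LR-tableaux of shape $(\gamma,\beta,\alpha)$ established in Section~\ref{sec-lr-soc} (via Theorems~\ref{thm-kleinST} and~\ref{thm-lr-soc-hom}), combined with the classical symmetry $c_{\alpha,\gamma}^\beta=c_{\gamma,\alpha}^\beta$. Your extra care about well-definedness, surjectivity via the strong realization of LR-tableaux (or the combinatorial inverse), and non-circularity only makes explicit what the paper leaves implicit.
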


\section{Applications and a~Conjecture}
\label{sec-applications}

  First, given an embedding in one of the categories
  $\mathcal S(n)$ (see below) and its socle tableau,
 we determine for
 each entry in the tableau the possible positions of the
 indecomposable direct summands of the embedding
 within the Auslander-Reiten quiver for $\mathcal S(n)$.

 \smallskip
 Next we consider the representation space for embeddings
 of type $(42,642,42)$, it is the
 constructible variety of short exact sequences
 $$0\longrightarrow N_\alpha\longrightarrow N_\beta\longrightarrow N_\gamma
 \longrightarrow 0$$
 where  $\alpha=(42),\beta=(642),\gamma=(42)$.
 This space contains the first occurance of a family
 of pairwise nonisomorphic indecomposable embeddings.
 We describe how socle tableaux partition this representation space.

 \smallskip
 Finally,
 we note that the correspondence between the socle tableau of an embedding
 and the dual LR-tableau is related
 to the tableau switching studied in \cite{bss}.
 We give an example and conclude with an open problem.

\subsection{Positioning objects in the Auslander-Reiten quiver}

Given an indecomposable embedding $M$ with socle tableau $\Sigma$,
we show that each entry $\ell$ in a given row $r$
in $\Sigma$ determines a region in the
Auslander-Reiten quiver $\Gamma$ in which $M$ does occur.
This region is given by those embeddings $X$ for which the
contravariant defect $\mu_\Sigma(\boxedentry 3\ell r)$
in the formula in Corollary~\ref{cor-cok-hom}
is positive; equivalently, the region is given by those objects $X$
for which there exists a homomorphism from $P_\ell^{\ell+r-1}$ to $X$
which does not factor through $f_\ell^{\ell+r}$.
  
  \begin{figure}[ht]
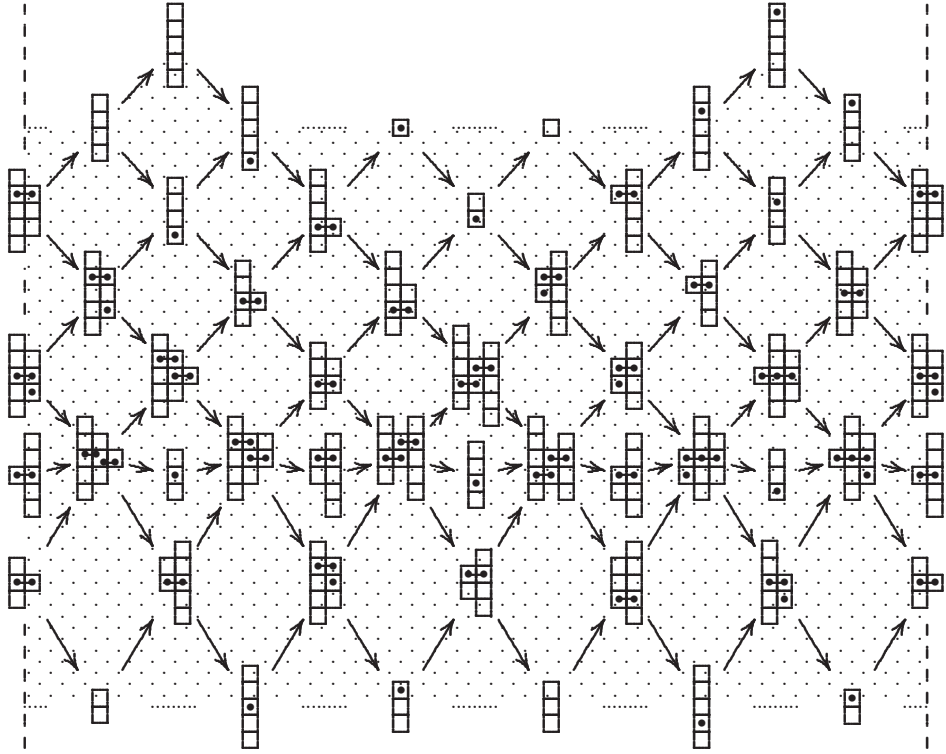

    \caption{The Auslander-Reiten quiver for $\mathcal S(5)$}
    \label{fig-arq}
$$
\hbox{\beginpicture
\setcoordinatesystem units <1cm,1.1cm>
\multiput{\ssq} at 1.9 -.1  1.9 .1 /  
\multiput{\ssq} at 3.9 -.4  3.9 -.2  3.9 0  3.9 .2  3.9 .4 /
\put{\bul} at 4 0
\multiput{\ssq} at 5.9 -.2  5.9 0  5.9 .2 /
\put{\bul} at 6 .2
\multiput{\ssq} at 7.9 -.2  7.9 0  7.9 .2 /
\multiput{\ssq} at 9.9 -.4  9.9 -.2  9.9 0  9.9 .2  9.9 .4 /
\put{\bul} at 10 -.2  
\multiput{\ssq} at 11.9 -.1  11.9 .1 /
\put{\bul} at 12 .1
\multiput{\ssq} at .8 1.3  .8 1.5  .8 1.7  1 1.5 
                3 1.1  3 1.3  3 1.5  3 1.7  3 1.9  2.8 1.5  2.8 1.7 
                4.8 1.1  4.8 1.3  4.8 1.5  4.8 1.7  4.8 1.9  5 1.3  5 1.5  
                                5 1.7
                7 1.2  7 1.4  7 1.6  7 1.8  6.8 1.4  6.8 1.6
                9 1.1  9 1.3  9 1.5  9 1.7  9 1.9  8.8 1.3  8.8 1.5  8.8 1.7
                10.8 1.1  10.8 1.3  10.8 1.5  10.8 1.7  10.8 1.9  
                                11 1.3  11 1.5
                                12.8 1.3  12.8 1.5  12.8 1.7  13 1.5 /
\multiput{\bul} at .9 1.5  1.1 1.5
                2.9 1.5  3.1 1.5  4.9 1.7  5.1 1.7  5.1 1.5
              6.9 1.6  7.1 1.6  8.9 1.3  9.1 1.3  10.9 1.5  11.1 1.5  
                                11.1 1.3
                                12.9 1.5  13.1 1.5 /
\plot .9 1.5  1.1 1.5 /
\plot 2.9 1.5  3.1 1.5 /
\plot 4.9 1.7  5.1 1.7 /
\plot 6.9 1.6  7.1 1.6 /
\plot 8.9 1.3  9.1 1.3 /
\plot 10.9 1.5  11.1 1.5 /
\plot 12.9 1.5  13.1 1.5 /
%
%
\multiput{\ssq} at 
1 2.4  1 2.6  1 2.8  1 3  1 3.2  .8 2.8
1.7 2.6  1.7 2.8  1.7 3  1.7 3.2  1.7 3.4  1.9 2.8  1.9 3  1.9 3.2  2.1 3
2.9 2.6  2.9 2.8  2.9 3
3.7 2.6  3.7 2.8  3.7 3  3.7 3.2  3.7 3.4  3.9 2.8  3.9 3  3.9 3.2  
                4.1 3  4.1 3.2
5 2.4  5 2.6  5 2.8  5 3  5 3.2  4.8 2.8  4.8 3
5.7 2.8  5.7 3  5.7 3.2  5.7 3.4  6.1 2.6  6.1 2.8  6.1 3  6.1 3.2  6.1 3.4
                5.9 3  5.9 3.2
6.9 2.5  6.9 2.7  6.9 2.9  6.9 3.1
7.7 2.6  7.7 2.8  7.7 3  7.7 3.2  7.7 3.4  8.1 2.6  8.1 2.8  8.1 3  8.1 3.2
                7.9 2.8  7.9 3
9 2.4  9 2.6  9 2.8  9 3  9 3.2  8.8 2.6  8.8 2.8 
9.9 2.6  9.9 2.8  9.9 3  9.9 3.2  9.9 3.4  10.1 2.8  10.1 3  10.1 3.2  9.7 2.8
               9.7 3 
10.9 2.6  10.9 2.8  10.9 3
11.9 2.6  11.9 2.8  11.9 3  11.9 3.2  11.9 3.4  12.1 2.8  12.1 3  12.1 3.2
               11.7 3
               13 2.4  13 2.6  13 2.8  13 3  13 3.2  12.8 2.8 /
\multiput{\bul} at 
                .9 2.8  1.1 2.8
                1.8 3.05  
        1.95 3.05
        2.05 2.95  2.2 2.95  3 2.8  3.8 3.2  4 3.2  4 3  4.2 3  4.9 3  5.1 3
        5.8 3  6 3  6 3.2  6.2 3.2  7 2.7  7.8 2.8  8 2.8  8 3  8.2 3
        8.9 2.8  9.1 2.8  9.8 2.8  9.8 3  10 3  10.2 3  11 2.6  11.8 3
        12 3  12.2 3  12.2 2.8
                        12.9 2.8  13.1 2.8 /
\plot .9 2.8  1.1 2.8 /
\plot 1.8 3.05  1.95 3.05 /
\plot 2.05 2.95  2.2 2.95 /
\plot 3.8 3.2  4 3.2 /
\plot 4 3  4.2 3 /
\plot 4.9 3  5.1 3 /
\plot 5.8 3  6 3 /
\plot 6 3.2  6.2 3.2 /
\plot 7.8 2.8  8 2.8 /
\plot 8 3  8.2 3 /
\plot 8.9 2.8  9.1 2.8 /
\plot 9.8 3  10.2 3 /
\plot 11.8 3  12.2 3 / 
\plot 12.9 2.8  13.1 2.8 /
\multiput{\ssq} at .8 3.6  .8 3.8  .8 4  .8 4.2  .8 4.4  1 3.8  1 4  1 4.2
        2.7 3.6  2.7 3.8  2.7 4  2.7 4.2  2.7 4.4  2.9 3.8  2.9 4  2.9 4.2
                                3.1 4
        4.8 3.7  4.8 3.9  4.8 4.1  4.8 4.3  5 3.9  5 4.1
        6.7 3.7  6.7 3.9  6.7 4.1  6.7 4.3  6.7 4.5  6.9 3.9  6.9 4.1  7.1 
                                3.5  7.1 3.7  7.1 3.9  7.1 4.1  7.1 4.3
        8.8 3.9  8.8 4.1  9 3.7  9 3.9  9 4.1  9 4.3
        10.7 4  10.9 3.6  10.9 3.8  10.9 4  10.9 4.2  10.9 4.4  11.1 3.8
                                11.1 4  11.1 4.2
        12.8 3.6  12.8 3.8  12.8 4  12.8 4.2  12.8 4.4  13 3.8  13 4  13 4.2 /
\multiput{\bul} at .9 4  1.1 4  1.1 3.8
                2.8 4.2  3 4.2  3 4  3.2 4  
        4.9 3.9  5.1 3.9  6.8 3.9  7 3.9  7 4.1  7.2 4.1  8.9 3.9  8.9 4.1 
        9.1 4.1  10.8 4  11 4  11.2 4
        12.9 4  13.1 4  13.1 3.8 /
\plot .9 4  1.1 4 /
\plot 2.8 4.2  3 4.2 /
\plot 3 4  3.2 4 /
\plot 4.9 3.9  5.1 3.9 /
\plot 6.8 3.9 7 3.9 /
\plot 7 4.1  7.2 4.1 /
\plot 8.9 4.1  9.1 4.1 /
\plot 10.8 4  11.2 4 /
\plot 12.9 4  13.1 4 /
\multiput{\ssq} at 
        1.8 4.6  1.8 4.8  1.8 5  1.8 5.2  1.8 5.4  2 4.8  2 5  2 5.2
        3.8 4.7  3.8 4.9  3.8 5.1  3.8 5.3  4 4.9
        5.8 4.6  5.8 4.8  5.8 5  5.8 5.2  5.8 5.4  6 4.8  6 5
        7.8 5  7.8 5.2  8 4.6  8 4.8  8 5  8 5.2  8 5.4
        9.8 5.1  10 4.7  10 4.9  10 5.1  10 5.3
        11.8 4.6  11.8 4.8  11.8 5  11.8 5.2  11.8 5.4  12 4.8  12 5  12 5.2 /
\multiput{\bul} at 
        1.9 5.2  2.1 5.2  2.1 4.8  3.9 4.9  4.1 4.9
        5.9 4.8  6.1 4.8  7.9 5  7.9 5.2  8.1 5.2  9.9 5.1  10.1 5.1  
        11.9 5  12.1 5 /
\plot 1.9 5.2  2.1 5.2 /
\plot 3.9 4.9  4.1 4.9 /
\plot 5.9 4.8  6.1 4.8 /
\plot 7.9 5.2  8.1 5.2 /
\plot 9.9 5.1  10.1 5.1 /
\plot 11.9 5  12.1 5 /
\multiput{\ssq} at .8 5.6  .8 5.8  .8 6  .8 6.2  .8 6.4  1 5.8  1 6  1 6.2
        2.9 5.7  2.9 5.9  2.9 6.1  2.9 6.3 
        4.8 5.6  4.8 5.8  4.8 6  4.8 6.2  4.8 6.4  5 5.8
        6.9 5.9  6.9 6.1
        8.8 6.2  9 5.6  9 5.8  9 6  9 6.2  9 6.4
        10.9 5.7  10.9 5.9  10.9 6.1  10.9 6.3
        12.8 5.6  12.8 5.8  12.8 6  12.8 6.2  12.8 6.4  13 5.8  13 6  13 6.2 /
\multiput{\bul} at .9 6.2  1.1 6.2
        3 5.7  4.9 5.8  5.1 5.8  7 5.9  8.9 6.2
        9.1 6.2  11 6.1 12.9 6.2  13.1 6.2 /
\plot .9 6.2  1.1 6.2 /
\plot 4.9 5.8  5.1 5.8 /
\plot 8.9 6.2  9.1 6.2 /
\plot 12.9 6.2  13.1 6.2 /
\multiput{\ssq} at 1.9 6.7  1.9 6.9
        1.9 7.1  1.9 7.3
        2.9 7.6  2.9 7.8  2.9 8  2.9 8.2  2.9 8.4  
        3.9 6.6  3.9 6.8  3.9 7  3.9 7.2  3.9 7.4
        5.9 7
        7.9 7
        9.9 6.6  9.9 6.8  9.9 7  9.9 7.2  9.9 7.4
        10.9 7.6  10.9 7.8  10.9 8  10.9 8.2  10.9 8.4
        11.9 6.7  11.9 6.9  11.9 7.1  11.9 7.3 /
\multiput{\bul} at 4 6.6  6 7 10 7.2  11 8.4  12 7.3 /
\arr{12.3 0.45} {12.7 1.05} 
\arr{1.3 1.05} {1.7 0.45} 
\arr{2.3 0.45} {2.7 1.05} 
\arr{3.3 1.05} {3.7 0.45} 
\arr{4.3 0.45} {4.7 1.05} 
\arr{5.3 1.05} {5.7 0.45} 
\arr{6.3 0.45} {6.7 1.05} 
\arr{7.3 1.05} {7.7 0.45} 
\arr{8.3 0.45} {8.7 1.05} 
\arr{9.3 1.05} {9.7 0.45} 
\arr{10.3 0.45} {10.7 1.05} 
\arr{11.3 1.05} {11.7 0.45} 
\arr{12.3 2.55} {12.7 1.95} 
\arr{1.3 1.95} {1.6 2.4} 
\arr{2.3 2.55} {2.7 1.95} 
\arr{3.3 1.95} {3.6 2.4} 
\arr{4.3 2.55} {4.7 1.95} 
\arr{5.3 1.95} {5.7 2.55} 
\arr{6.4 2.4 } {6.7 1.95} 
\arr{7.3 1.95} {7.6 2.4 } 
\arr{8.4 2.4 } {8.7 1.95} 
\arr{9.3 1.95} {9.7 2.55} 
\arr{10.3 2.55} {10.7 1.95} 
\arr{11.3 1.95} {11.7 2.55} 
\arr{12.4 3.4} {12.7 3.7} 
\arr{1.3 3.7} {1.6 3.4} 
\arr{2.3 3.3} {2.6 3.6} 
\arr{3.3 3.7} {3.6 3.4} 
\arr{4.4 3.4} {4.7 3.7} 
\arr{5.3 3.7} {5.6 3.4} 
\arr{6.4 3.4} {6.6 3.6} 
\arr{7.4 3.6} {7.6 3.4} 
\arr{8.4 3.4} {8.7 3.7} 
\arr{9.3 3.7} {9.7 3.3} 
\arr{10.4 3.4} {10.7 3.7} 
\arr{11.4 3.6} {11.7 3.3} 
\arr{12.3 4.7} {12.7 4.3} 
\arr{1.3 4.3} {1.7 4.7} 
\arr{2.3 4.7} {2.6 4.4} 
\arr{3.3 4.3} {3.7 4.7} 
\arr{4.3 4.7} {4.7 4.3} 
\arr{5.3 4.3} {5.7 4.7} 
\arr{6.3 4.7} {6.6 4.4} 
\arr{7.4 4.4} {7.7 4.7} 
\arr{8.3 4.7} {8.7 4.3} 
\arr{9.3 4.3} {9.7 4.7} 
\arr{10.3 4.7} {10.7 4.3} 
\arr{11.4 4.4} {11.7 4.7} 
\arr{12.3 5.3} {12.7 5.7} 
\arr{1.3 5.7} {1.7 5.3} 
\arr{2.3 5.3} {2.7 5.7} 
\arr{3.3 5.7} {3.7 5.3} 
\arr{4.3 5.3} {4.7 5.7} 
\arr{5.3 5.7} {5.7 5.3} 
\arr{6.3 5.3} {6.7 5.7} 
\arr{7.3 5.7} {7.7 5.3} 
\arr{8.3 5.3} {8.7 5.7} 
\arr{9.3 5.7} {9.7 5.3} 
\arr{10.3 5.3} {10.7 5.7} 
\arr{11.3 5.7} {11.7 5.3} 
\arr{12.3 6.7} {12.7 6.3} 
\arr{1.3 6.3} {1.7 6.7} 
\arr{2.3 6.7} {2.7 6.3} 
\arr{3.3 6.3} {3.7 6.7} 
\arr{4.3 6.7} {4.7 6.3} 
\arr{5.3 6.3} {5.7 6.7} 
\arr{6.3 6.7} {6.7 6.3} 
\arr{7.3 6.3} {7.7 6.7} 
\arr{8.3 6.7} {8.7 6.3} 
\arr{9.3 6.3} {9.7 6.7} 
\arr{10.3 6.7} {10.7 6.3} 
\arr{11.3 6.3} {11.7 6.7} 

\arr{2.3 7.3} {2.7 7.7} 
\arr{3.3 7.7} {3.7 7.3} 
\arr{10.3 7.3} {10.7 7.7} 
\arr{11.3 7.7} {11.7 7.3} 

\arr{12.4 2.92}{12.7 2.86}
\arr{1.3 2.86}{1.6 2.92}
\arr{2.4 2.92}{2.7 2.86}
\arr{3.3 2.86}{3.6 2.92}
\arr{4.4 2.92}{4.7 2.86}
\arr{5.3 2.86}{5.6 2.92}
\arr{6.4 2.92}{6.7 2.86}
\arr{7.3 2.86}{7.6 2.92}
\arr{8.4 2.92}{8.7 2.86}
\arr{9.3 2.86}{9.6 2.92}
\arr{10.4 2.92}{10.7 2.86}
\arr{11.3 2.86}{11.6 2.92}
\setdots<2pt>
\plot 1 0  1.3 0 /
\plot 2.7 0  3.3 0 /
\plot 4.7 0  5.3 0 /
\plot 6.7 0  7.3 0 /
\plot 8.7 0  9.3 0 /
\plot 10.7 0  11.3 0 /
\plot 12.7 0  13 0 /
\plot 1 7  1.3 7 /
\plot 4.7 7  5.3 7 /
\plot 6.7 7  7.3 7 /
\plot 8.7 7  9.3 7 /
\plot 12.7 7  13 7 /
\setdashes <1.5mm>
\plot 1 -.5  1 1 /
\plot 1 4.75  1 5.3 /
\plot 1 6.75  1 8.5 /
\plot 13 -.5  13 1 /
\plot 13 4.75  13 5.3 /
\plot 13 6.75  13 8.5 /
\setshadegrid span <1.5mm>
\vshade   1   0 7 <,z,,> 
          2   0 7  <z,z,,> 
          3   0 8  <z,z,,>
          4   0 7  <z,z,,> 
          10  0 7  <z,z,,> 
          11  0 8  <z,z,,> 
          12   0 7 <z,z,,>
          13  0 7 /
\endpicture} 
$$
\end{figure}

\begin{ex}
  Consider the category $\mathcal S(5)$ of embeddings $(A\subset B)$
  of a submodule in a finite length module $B$ over $\Lambda=k[T]/(T^5)$.
  Among the categories $\mathcal S(n)$, this is the largest category
  of finite representation type.  We picture in Figure~\ref{fig-arq}
  the Auslander-Reiten
  quiver for $\mathcal S(5)$ from \cite[(6.5)]{rs}
  which contains the 50 indecomposable objects, up to isomorphy.

\medskip
For each of the objects in the Auslander-Reiten quiver,
we compute the socle tablau; it is pictured at the position of the
given object in Figure~\ref{fig-socle}.

\medskip
Let us focus on the case where $\ell=2$ and $r=2$, so we are interested
in those embeddings $M$ which have a 2 in the second row of their
socle tableau.  Consider the short exact sequence
$$0\longrightarrow P_2^3\stackrel f\longrightarrow
P_2^4\oplus P_1^2\longrightarrow P_1^3\longrightarrow 0$$
from Corollary~\ref{cor-cok-hom}.  We have labelled the objects
$A=P_2^3$, $B=P_2^4$, $B'=P_1^2$ and $C=P_1^3$ in the diagram.
The region in which $M$ can possible occur is encircled
in Figure~\ref{fig-socle}.

\end{ex}


\begin{figure}[ht]
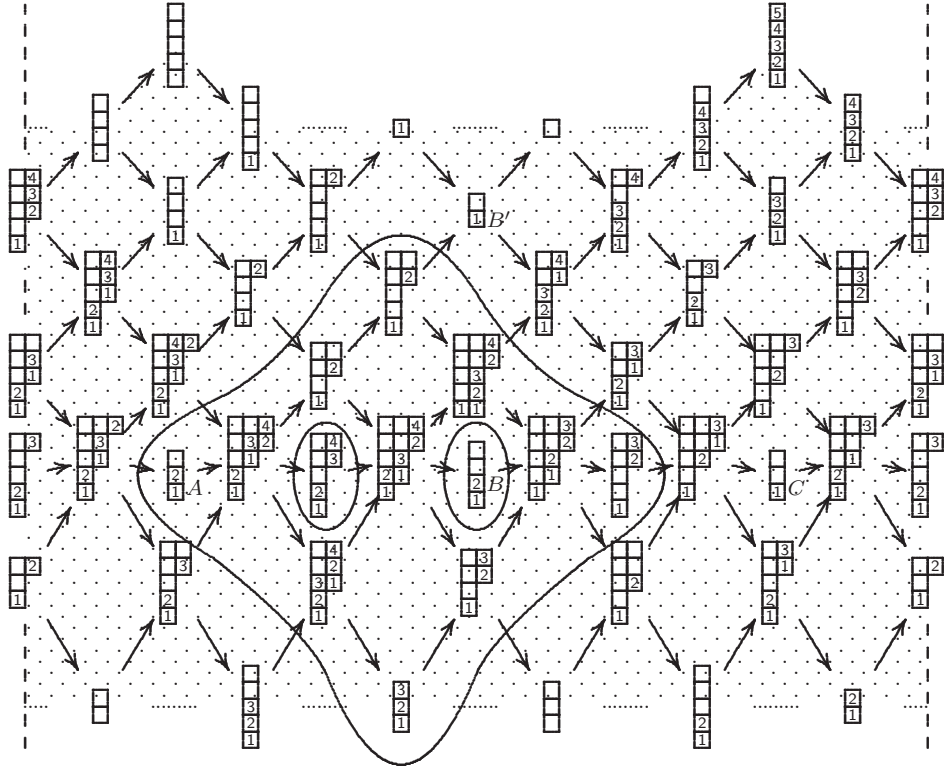

  \caption{The socle tableaux for the objects in $\mathcal S(5)$}
  \label{fig-socle}
$$\hbox{\beginpicture
\setcoordinatesystem units <1cm,1.1cm>
\multiput{\ssq} at 11.9 -.1  11.9 .1 /
\put{\num2} at 12 .1
\put{\num1} at 12 -.1 
\multiput{\ssq} at 1.9 -.1  1.9 .1 /  
\multiput{\ssq} at 3.9 -.4  3.9 -.2  3.9 0  3.9 .2  3.9 .4 /
\put{\num3} at 4 0
\put{\num2} at 4 -.2
\put{\num1} at 4 -.4
\multiput{\ssq} at 5.9 -.2  5.9 0  5.9 .2 /
\put{\num3} at 6 .2
\put{\num2} at 6 0
\put{\num1} at 6 -.2
\multiput{\ssq} at 7.9 -.2  7.9 0  7.9 .2 /
\multiput{\ssq} at 9.9 -.4  9.9 -.2  9.9 0  9.9 .2  9.9 .4 /
\put{\num2} at 10 -.2
\put{\num1} at 10 -.4
%
%
\multiput{\ssq} at .8 1.3  .8 1.5  .8 1.7  1 1.7 /
\put{\num2} at 1.1 1.7
\put{\num1} at .9 1.3
\multiput{\ssq} at  2.8 1.1  2.8 1.3  2.8 1.5  2.8 1.7  2.8 1.9  3 1.7  3 1.9 /
\put{\num3} at 3.1 1.7
\put{\num2} at 2.9 1.3
\put{\num1} at 2.9 1.1
\multiput{\ssq} at 4.8 1.1  4.8 1.3  4.8 1.5  4.8 1.7  4.8 1.9  
                5 1.9  5 1.5  5 1.7 /
\multiput{\num1} at 4.9 1.1  5.1 1.5 /
\multiput{\num2} at 4.9 1.3  5.1 1.7 /
\put{\num4} at 5.1 1.9  
\put{\num3} at 4.9 1.5  
\multiput{\ssq} at 6.8 1.2  6.8 1.4  6.8 1.6  6.8 1.8  7 1.6  7 1.8 /
\put{\num3} at 7.1 1.8
\put{\num2} at 7.1 1.6
\put{\num1} at 6.9 1.2
\multiput{\ssq} at  9 1.5  9 1.7  9 1.9  
                8.8 1.1  8.8 1.3  8.8 1.5  8.8 1.7  8.8 1.9 /
\put{\num2} at 9.1 1.5
\put{\num1} at 8.9 1.1 
\multiput{\ssq} at 10.8 1.1  10.8 1.3  10.8 1.5  10.8 1.7  10.8 1.9  
                                11 1.7  11 1.9 /
\multiput{\num1} at 11.1 1.7  10.9 1.1 /
\put{\num3} at 11.1 1.9 
\put{\num2} at 10.9 1.3 
\multiput{\ssq} at 12.8 1.3  12.8 1.5  12.8 1.7  13 1.7 /
\put{\num2} at 13.1 1.7
\put{\num1} at 12.9 1.3
%
%
%
\multiput{\ssq} at .8 2.4  .8 2.6  .8 2.8  .8 3  .8 3.2  1 3.2 /
\put{\num3} at 1.1 3.2
\put{\num2} at .9 2.6
\put{\num1} at .9 2.4
\multiput{\ssq} at 1.7 2.6  1.7 2.8  1.7 3  1.7 3.2  1.7 3.4  
                1.9 3.4 1.9 3  1.9 3.2  2.1 3.4 /
\multiput{\num1} at 1.8 2.6  2 3 /  
\multiput{\num2} at 1.8 2.8  2.2 3.4 /
\put{\num3} at 2 3.2 
\multiput{\ssq} at 2.9 2.6  2.9 2.8  2.9 3 /
\put{\num2} at 3 2.8
\put{\num1} at 3 2.6
\multiput{\ssq} at 3.7 2.6  3.7 2.8  3.7 3  3.7 3.2  3.7 3.4  
                3.9 3.4  3.9 3  3.9 3.2   4.1 3.4  4.1 3.2 /
\multiput{\num1} at 3.8 2.6  4 3 /
\multiput{\num2} at 4.2 3.2  3.8 2.8 /
\put{\num3} at 4 3.2
\put{\num4} at 4.2 3.4 
\multiput{\ssq} at 4.8 2.4  4.8 2.6  4.8 2.8  4.8 3  4.8 3.2  5 3.2  5 3 /
\put{\num4} at 5.1 3.2
\put{\num3} at 5.1 3
\put{\num2} at 4.9 2.6
\put{\num1} at 4.9 2.4
\multiput{\ssq} at 5.7 2.6  5.7 2.8  5.7 3  5.7 3.2  5.7 3.4  
                5.9 2.8  5.9 3  5.9 3.2  5.9 3.4
                6.1 3.4  6.1 3.2 /
\multiput{\num1} at 5.8 2.6  6 2.8 / 
\multiput{\num2} at 6.2 3.2  5.8 2.8 /
\put{\num3} at 6 3
\put{\num4} at 6.2 3.4 
\multiput{\ssq} at 6.9 2.5  6.9 2.7  6.9 2.9  6.9 3.1 /
\put{\num2} at 7 2.7
\put{\num1} at 7 2.5
\multiput{\ssq} at 7.7 2.6  7.7 2.8  7.7 3  7.7 3.2  7.7 3.4  
                7.9 2.8  7.9 3  7.9 3.2  7.9 3.4   8.1 3.2  8.1 3.4 /
\multiput{\num1} at 7.8 2.6  8 2.8 /
\multiput{\num2} at 8.2 3.2  8 3 /
\put{\num3} at 8.2 3.4  
\multiput{\ssq} at 8.8 2.4  8.8 2.6  8.8 2.8  8.8 3  8.8 3.2  9 3  9 3.2 /
\put{\num3} at 9.1 3.2 
\put{\num2} at 9.1 3
\put{\num1} at 8.9 2.4
\multiput{\ssq} at 9.7 2.6  9.7 2.8  9.7 3  9.7 3.2  9.7 3.4 
                9.9 3  9.9 3.2  9.9 3.4  10.1 3.2  10.1 3.4 /
\multiput{\num1} at 10.2 3.2  9.8 2.6 /
\put{\num2} at 10 3
\put{\num3} at 10.2 3.4 
\multiput{\ssq} at 10.9 2.6  10.9 2.8  10.9 3 /
\put{\num1} at 11 2.6
\multiput{\ssq} at 11.7 2.6  11.7 2.8  11.7 3  11.7 3.2  11.7 3.4  
                11.9 3.4  11.9 3  11.9 3.2       12.1 3.4 /
\multiput{\num1} at 11.8 2.6  12 3 / 
\put{\num2} at 11.8 2.8 
\put{\num3} at 12.2 3.4
\multiput{\ssq} at 12.8 2.4  12.8 2.6  12.8 2.8  12.8 3  12.8 3.2  13 3.2 /
\put{\num3} at 13.1 3.2
\put{\num2} at 12.9 2.6
\put{\num1} at 12.9 2.4
\multiput{\ssq} at .8 3.6  .8 3.8  .8 4  .8 4.2  .8 4.4  1 4  1 4.4  1 4.2 /
\multiput{\num1} at .9 3.6  1.1 4 /
\put{\num2} at .9 3.8
\put{\num3} at 1.1 4.2 
\multiput{\ssq} at  2.7 3.6  2.7 3.8  2.7 4  2.7 4.2  2.7 4.4  
                2.9 4.4  2.9 4  2.9 4.2      3.1 4.4 /
\multiput{\num1} at 3 4  2.8 3.6 /
\multiput{\num2} at 3.2 4.4  2.8 3.8 /
\put{\num3} at 3 4.2
\put{\num4} at 3 4.4 
\multiput{\ssq} at 4.8 3.7  4.8 3.9  4.8 4.1  4.8 4.3  5 4.3  5 4.1 /
\put{\num2} at 5.1 4.1
\put{\num1} at 4.9 3.7
\multiput{\ssq} at  6.7 3.6  6.7 3.8  6.7 4  6.7 4.2  6.7 4.4  
                6.9 3.6  6.9 3.8  6.9 4  6.9 4.2  6.9 4.4   7.1 4.2  7.1 4.4 /
\multiput{\num1} at 6.8 3.6  7 3.6 /
\multiput{\num2} at 7 3.8  7.2 4.2 /
\put{\num3} at 7 4
\put{\num4} at 7.2 4.4
\multiput{\ssq} at   9 4.3  9 4.1  8.8 3.7  8.8 3.9  8.8 4.1  8.8 4.3 /
\multiput{\num1} at 8.9 3.7  9.1 4.1 /
\put{\num2} at 8.9 3.9 
\put{\num3} at 9.1 4.3 
\multiput{\ssq} at    10.7 3.6  10.7 3.8  10.7 4  10.7 4.2  10.7 4.4  
                10.9 4.4   10.9 4  10.9 4.2   11.1 4.4 /
\put{\num3} at 11.2 4.4 
\put{\num2} at 11 4
\put{\num1} at 10.8 3.6
\multiput{\ssq} at 12.8 3.6  12.8 3.8  12.8 4  12.8 4.2  12.8 4.4  13 4  13 4.4  13 4.2 /
\multiput{\num1} at 12.9 3.6  13.1 4 /
\put{\num2} at 12.9 3.8
\put{\num3} at 13.1 4.2 
%
%
%
\multiput{\ssq} at 1.8 4.6  1.8 4.8  1.8 5  1.8 5.2  1.8 5.4  
                2 5.4  2 5  2 5.2 /
\multiput{\num1} at 2.1 5  1.9 4.6 /
\put{\num2} at 1.9 4.8
\put{\num3} at 2.1 5.2
\put{\num4} at 2.1 5.4 
\multiput{\ssq} at 3.8 4.7  3.8 4.9  3.8 5.1  3.8 5.3  4 5.3 /
\put{\num2} at 4.1 5.3
\put{\num1} at 3.9 4.7
\multiput{\ssq} at 5.8 4.6  5.8 4.8  5.8 5  5.8 5.2  5.8 5.4  6 5.2  6 5.4 /
\put{\num2} at 6.1 5.2 
\put{\num1} at 5.9 4.6
\multiput{\ssq} at  7.8 4.6  7.8 4.8  7.8 5  7.8 5.2  7.8 5.4  8 5.2  8 5.4 /
\multiput{\num1} at 7.9 4.6  8.1 5.2 /
\put{\num2} at 7.9 4.8
\put{\num3} at 7.9 5
\put{\num4} at 8.1 5.4
\multiput{\ssq} at  9.8 4.7  9.8 4.9  9.8 5.1  9.8 5.3  10 5.3 /
\put{\num3} at 10.1 5.3
\put{\num2} at 9.9 4.9
\put{\num1} at 9.9 4.7
\multiput{\ssq} at 11.8 4.6  11.8 4.8  11.8 5  11.8 5.2  11.8 5.4  
                12 5.4  12 5  12 5.2 /
\put{\num3} at 12.1 5.2
\put{\num2} at 12.1 5
\put{\num1} at 11.9 4.6
\multiput{\ssq} at .8 5.6  .8 5.8  .8 6  .8 6.2  .8 6.4  1 6.4  1 6  1 6.2 /
\put{\num4} at 1.1 6.4
\put{\num3} at 1.1 6.2
\put{\num2} at 1.1 6
\put{\num1} at .9 5.6
\multiput{\ssq} at  2.9 5.7  2.9 5.9  2.9 6.1  2.9 6.3 /
\put{\num1} at 3 5.7
\multiput{\ssq} at  4.8 5.6  4.8 5.8  4.8 6  4.8 6.2  4.8 6.4  5 6.4 /
\put{\num2} at 5.1 6.4
\put{\num1} at 4.9 5.6
\multiput{\ssq} at  6.9 5.9  6.9 6.1 /
\put{\num1} at 7 5.9
\multiput{\ssq} at  8.8 5.6  8.8 5.8  8.8 6  8.8 6.2  8.8 6.4  9 6.4 /
\put{\num4} at 9.1 6.4
\put{\num3} at 8.9 6
\put{\num2} at 8.9 5.8
\put{\num1} at 8.9 5.6
\multiput{\ssq} at 10.9 5.7  10.9 5.9  10.9 6.1  10.9 6.3 /
\put{\num3} at 11 6.1
\put{\num2} at 11 5.9
\put{\num1} at 11 5.7
\multiput{\ssq} at 12.8 5.6  12.8 5.8  12.8 6  12.8 6.2  12.8 6.4  13 6.4  13 6  13 6.2 /
\put{\num4} at 13.1 6.4
\put{\num3} at 13.1 6.2
\put{\num2} at 13.1 6
\put{\num1} at 12.9 5.6
%
%
%
\multiput{\ssq} at   1.9 6.7  1.9 6.9  1.9 7.1  1.9 7.3 /
\multiput{\ssq} at    2.9 7.6  2.9 7.8  2.9 8  2.9 8.2  2.9 8.4 /
\multiput{\ssq} at  3.9 6.6  3.9 6.8  3.9 7  3.9 7.2  3.9 7.4 /
\put{\num1} at 4 6.6
\multiput{\ssq} at  5.9 7 /
\put{\num1} at 6 7
\multiput{\ssq} at   7.9 7 /
\multiput{\ssq} at  9.9 6.6  9.9 6.8  9.9 7  9.9 7.2  9.9 7.4 /
\put{\num4} at 10 7.2 
\put{\num3} at 10 7
\put{\num2} at 10 6.8
\put{\num1} at 10 6.6
\multiput{\ssq} at  10.9 7.6  10.9 7.8  10.9 8  10.9 8.2  10.9 8.4 /
\put{\num5} at 11 8.4
\put{\num4} at 11 8.2
\put{\num3} at 11 8
\put{\num2} at 11 7.8
\put{\num1} at 11 7.6
\multiput{\ssq} at  11.9 6.7  11.9 6.9  11.9 7.1  11.9 7.3 /
\put{\num4} at 12 7.3
\put{\num3} at 12 7.1
\put{\num2} at 12 6.9
\put{\num1} at 12 6.7
\arr{12.3 0.45} {12.7 1.05} 
\arr{1.3 1.05} {1.7 0.45} 
\arr{2.3 0.45} {2.7 1.05} 
\arr{3.3 1.05} {3.7 0.45} 
\arr{4.3 0.45} {4.7 1.05} 
\arr{5.3 1.05} {5.7 0.45} 
\arr{6.3 0.45} {6.7 1.05} 
\arr{7.3 1.05} {7.7 0.45} 
\arr{8.3 0.45} {8.7 1.05} 
\arr{9.3 1.05} {9.7 0.45} 
\arr{10.3 0.45} {10.7 1.05} 
\arr{11.3 1.05} {11.7 0.45} 
\arr{12.3 2.55} {12.7 1.95} 
\arr{1.3 1.95} {1.6 2.4} 
\arr{2.3 2.55} {2.7 1.95} 
\arr{3.3 1.95} {3.6 2.4} 
\arr{4.3 2.55} {4.7 1.95} 
\arr{5.3 1.95} {5.7 2.55} 
\arr{6.4 2.4 } {6.7 1.95} 
\arr{7.3 1.95} {7.6 2.4 } 
\arr{8.4 2.4 } {8.7 1.95} 
\arr{9.3 1.95} {9.7 2.55} 
\arr{10.3 2.55} {10.7 1.95} 
\arr{11.3 1.95} {11.7 2.55} 
\arr{12.4 3.4} {12.7 3.7} 
\arr{1.3 3.7} {1.6 3.4} 
\arr{2.3 3.3} {2.6 3.6} 
\arr{3.3 3.7} {3.6 3.4} 
\arr{4.4 3.4} {4.7 3.7} 
\arr{5.3 3.7} {5.6 3.4} 
\arr{6.4 3.4} {6.6 3.6} 
\arr{7.4 3.6} {7.6 3.4} 
\arr{8.4 3.4} {8.7 3.7} 
\arr{9.3 3.7} {9.7 3.3} 
\arr{10.4 3.4} {10.7 3.7} 
\arr{11.4 3.6} {11.7 3.3} 
\arr{12.3 4.7} {12.7 4.3} 
\arr{1.3 4.3} {1.7 4.7} 
\arr{2.3 4.7} {2.6 4.4} 
\arr{3.3 4.3} {3.7 4.7} 
\arr{4.3 4.7} {4.7 4.3} 
\arr{5.3 4.3} {5.7 4.7} 
\arr{6.3 4.7} {6.6 4.4} 
\arr{7.4 4.4} {7.7 4.7} 
\arr{8.3 4.7} {8.7 4.3} 
\arr{9.3 4.3} {9.7 4.7} 
\arr{10.3 4.7} {10.7 4.3} 
\arr{11.4 4.4} {11.7 4.7} 
\arr{12.3 5.3} {12.7 5.7} 
\arr{1.3 5.7} {1.7 5.3} 
\arr{2.3 5.3} {2.7 5.7} 
\arr{3.3 5.7} {3.7 5.3} 
\arr{4.3 5.3} {4.7 5.7} 
\arr{5.3 5.7} {5.7 5.3} 
\arr{6.3 5.3} {6.7 5.7} 
\arr{7.3 5.7} {7.7 5.3} 
\arr{8.3 5.3} {8.7 5.7} 
\arr{9.3 5.7} {9.7 5.3} 
\arr{10.3 5.3} {10.7 5.7} 
\arr{11.3 5.7} {11.7 5.3} 
\arr{12.3 6.7} {12.7 6.3} 
\arr{1.3 6.3} {1.7 6.7} 
\arr{2.3 6.7} {2.7 6.3} 
\arr{3.3 6.3} {3.7 6.7} 
\arr{4.3 6.7} {4.7 6.3} 
\arr{5.3 6.3} {5.7 6.7} 
\arr{6.3 6.7} {6.7 6.3} 
\arr{7.3 6.3} {7.7 6.7} 
\arr{8.3 6.7} {8.7 6.3} 
\arr{9.3 6.3} {9.7 6.7} 
\arr{10.3 6.7} {10.7 6.3} 
\arr{11.3 6.3} {11.7 6.7} 

\arr{2.3 7.3} {2.7 7.7} 
\arr{3.3 7.7} {3.7 7.3} 
\arr{10.3 7.3} {10.7 7.7} 
\arr{11.3 7.7} {11.7 7.3} 

\arr{12.4 2.92}{12.7 2.86}
\arr{1.3 2.86}{1.6 2.92}
\arr{2.4 2.92}{2.7 2.86}
\arr{3.3 2.86}{3.6 2.92}
\arr{4.4 2.92}{4.7 2.86}
\arr{5.3 2.86}{5.6 2.92}
\arr{6.4 2.92}{6.7 2.86}
\arr{7.3 2.86}{7.6 2.92}
\arr{8.4 2.92}{8.7 2.86}
\arr{9.3 2.86}{9.6 2.92}
\arr{10.4 2.92}{10.7 2.86}
\arr{11.3 2.86}{11.6 2.92}
\setdots<2pt>
\plot 1 0  1.3 0 /
\plot 2.7 0  3.3 0 /
\plot 4.7 0  5.3 0 /
\plot 6.7 0  7.3 0 /
\plot 8.7 0  9.3 0 /
\plot 10.7 0  11.3 0 /
\plot 12.7 0  13 0 /
\plot 1 7  1.3 7 /
\plot 4.7 7  5.3 7 /
\plot 6.7 7  7.3 7 /
\plot 8.7 7  9.3 7 /
\plot 12.7 7  13 7 /
\setdashes <1.5mm>
\plot 1 -.5  1 1 /
\plot 1 4.75  1 5.3 /
\plot 1 6.75  1 8.5 /
\plot 13 -.5  13 1 /
\plot 13 4.75  13 5.3 /
\plot 13 6.75  13 8.5 /
\setshadegrid span <1.5mm>
\vshade   1   0 7 <,z,,> 
          2   0 7  <z,z,,> 
          3   0 8  <z,z,,>
          4   0 7  <z,z,,> 
          10  0 7  <z,z,,> 
          11  0 8  <z,z,,> 
          12   0 7 <z,z,,>
          13  0 7 /
\setsolid
\setquadratic
\plot 3.5 1.8  2.5 2.8  3.5 3.7  4.3 4.2  5 5  6 5.7  7 5  7.7 4.2  8.5 3.7  9.5 2.8  
      8.5 1.8  7.5 1  7 .4  6 -.7  5 .4  4.5 1  3.5 1.8 /
\ellipticalarc axes ratio 3:5 360 degrees from 7 3.44 center at 7 2.79
\ellipticalarc axes ratio 3:5 360 degrees from 5 3.44 center at 5 2.79
\put{$\scriptstyle C$} at 11.25 2.65
\put{$\scriptstyle B'$} at 7.3 5.9
\put{$\scriptstyle B$} at 7.25 2.7
\put{$\scriptstyle A$} at 3.25 2.65
\endpicture} 
$$
\end{figure}

\bigskip
We note that in general (and in this example),
the entries in the socle tableau
determine different regions in the Auslander-Reiten quiver
than the entries in the  LR-tableau or in the dual LR-tableau, see
\cite[Section~6]{s-lrtableau}.

\subsection{Positioning objects in the representation space}


\begin{figure}[ht]
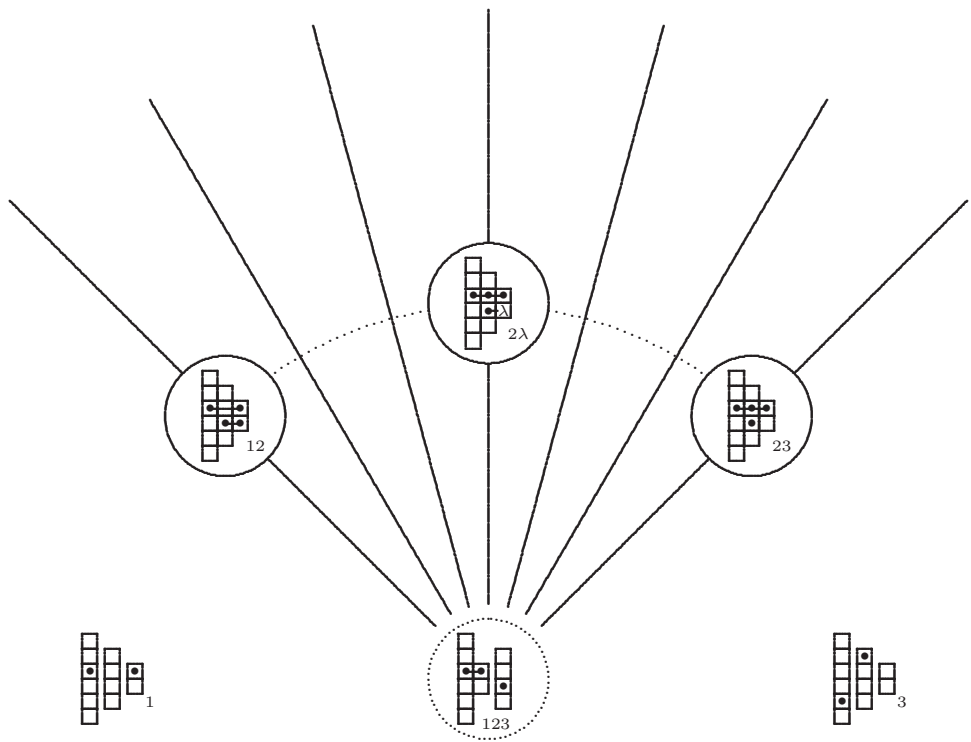

  \caption{The representation space $\mathbb V_{42,42}^{642}$}
  \label{fig-rep-space}
$$
\hbox{\beginpicture
\setcoordinatesystem units <1cm,1cm>
\multiput{} at -.5 -1  10.5 9 /
\multiput{\ssq} at -.4 -.4  -.4 -.2  -.4 0  -.4 .2  -.4 .4  -.4 .6
                   -.1 -.2  -.1 0  -.1 .2  -.1 .4
                   .2 0  .2 .2 /
\multiput{\bul} at -.3 .2  .3 .2 /
\put{$\ssize 1$} at .5 -.2
\multiput{\ssq} at 4.6 -.4  4.6 -.2  4.6 0  4.6 .2  4.6 .4  4.6 .6
                   4.8 0  4.8 .2
                   5.1 -.2  5.1 0  5.1 .2  5.1 .4 /
\multiput{\bul} at 4.7 .2  4.9 .2  5.2 0 /
\plot 4.7 .2  4.9 .2 /
\put{$\ssize123$} at 5.1 -.5
\multiput{\ssq} at 9.6 -.4  9.6 -.2  9.6 0  9.6 .2  9.6 .4  9.6 .6
                   9.9 -.2  9.9 0  9.9 .2  9.9  .4
                   10.2 0  10.2 .2 /
\multiput{\bul} at 9.7 -.2  10 .4 /
\put{$\ssize 3$} at 10.5 -.2
\multiput{\ssq} at 4.7 4.6  4.7 4.8  4.7 5  4.7 5.2  4.7 5.4  4.7 5.6
                   4.9 4.8  4.9 5  4.9 5.2  4.9 5.4
                   5.1 5  5.1 5.2 /
\multiput{\bul} at 4.8 5.2  5 5.2  5.2 5.2  5 5 /
\put{$\scriptscriptstyle\lambda$} at 5.2 5
\put{$\ssize 2\lambda$} at 5.4 4.7
\plot 4.8 5.2  5.2 5.2 /
\plot 5 5  5.12 5 /
\multiput{\ssq} at 1.2 3.1  1.2 3.3  1.2 3.5  1.2 3.7  1.2 3.9  1.2 4.1
                   1.4 3.3  1.4 3.5  1.4 3.7  1.4 3.9
                   1.6 3.5  1.6 3.7 /
\multiput{\bul} at 1.3 3.7  1.7 3.7  1.5 3.5  1.7 3.5 /
\plot 1.3 3.7  1.7 3.7 /
\plot 1.5 3.5  1.7 3.5 /
\put{$\ssize12$} at 1.9 3.2
\multiput{\ssq} at 8.2 3.1  8.2 3.3  8.2 3.5  8.2 3.7  8.2 3.9  8.2 4.1
                   8.4 3.3  8.4 3.5  8.4 3.7  8.4 3.9
                   8.6 3.5  8.6 3.7 /
\multiput{\bul} at 8.3 3.7  8.5 3.7  8.7 3.7  8.5 3.5 /
\plot 8.3 3.7  8.7 3.7 /
\put{$\ssize23$} at 8.9 3.2
\setdots<2pt>
\circulararc 360 degrees from 5 .9  center at  5 .1
\setsolid
\circulararc 360 degrees from 1.5 4.4  center at  1.5 3.6
\circulararc 360 degrees from 5 5.9  center at  5 5.1
\circulararc 360 degrees from 8.5 4.4  center at  8.5 3.6
\setdots<3pt>
\circulararc 26 degrees from 4.2 5  center at  5 .1
\circulararc -26 degrees from 5.8 5  center at  5 .1
\setsolid
\plot 5 1.1  5 4.3 /
\plot 5 5.9  5 9 /
\plot 5.26 1.066  7.33 8.79 /
\plot 5.5 .966  9.5 7.804 /
\plot 5.707 .807  7.93 3.03 /
\plot 9.066 4.166  11.364 6.464 /
\plot 4.74 1.066  2.67 8.79 /
\plot 4.5 .966  .5  7.804 /
\plot 4.293 .807  2.07 3.03 /
\plot .934 4.166  -1.364 6.464 /
\endpicture}$$
\end{figure}

Among the categories of type $\mathcal S(n)$, the first category of
infinite representation type is $\mathcal S(6)$ (see \cite{rs}).
And in $\mathcal S(6)$,
the smallest parametrized family of pairwise non-isomorphic
indecomposable embeddings $(A\subset B)$ occurs when $A\cong N_{42}$
and $B\cong N_{642}$; in this case $B/A\cong N_{42}$.

\medskip
We use the notation from \cite[Example~4.5]{ks-poles}
where we have studied the representation space
$V=\mathbb V_{42,42}^{642}$, it is the constructible variety of all
$f\in\Hom_k(N_{42},N_{642})$ which give rise to a short exact sequence
of $\Lambda$-modules $0\to N_{42}\stackrel f\to N_{642}\to N_{42}\to 0$
where $\Lambda=k[T]/(T^6)$.
This variety $V$ has three irreducible components, they are given by
the three LR-tableaux of shape $(42,642,42)$, but also by the three
socle tableaux of the same shape:
\newcommand\SixOne{\begin{picture}(9,18)
    \multiput(0,0)(0,3)6{\smbox}
    \multiput(3,6)(0,3)4{\smbox}
    \multiput(6,12)(0,3)2{\smbox}
    \put(6,15){\numbox2}
    \put(6,12){\numbox1}
    \put(3,9){\numbox4}
    \put(3,6){\numbox3}
    \put(0,3){\numbox2}
    \put(0,0){\numbox1}
  \end{picture}
}
\newcommand\SixTwo{\begin{picture}(9,18)
    \multiput(0,0)(0,3)6{\smbox}
    \multiput(3,6)(0,3)4{\smbox}
    \multiput(6,12)(0,3)2{\smbox}
    \put(6,15){\numbox4}
    \put(6,12){\numbox2}
    \put(3,9){\numbox3}
    \put(3,6){\numbox1}
    \put(0,3){\numbox2}
    \put(0,0){\numbox1}
  \end{picture}
}
\newcommand\SixThree{\begin{picture}(9,18)
    \multiput(0,0)(0,3)6{\smbox}
    \multiput(3,6)(0,3)4{\smbox}
    \multiput(6,12)(0,3)2{\smbox}
    \put(6,15){\numbox4}
    \put(6,12){\numbox3}
    \put(3,9){\numbox2}
    \put(3,6){\numbox1}
    \put(0,3){\numbox2}
    \put(0,0){\numbox1}
  \end{picture}
}

$$ \Sigma_1:\;\raisebox{-7mm}{\SixOne},\qquad
\Sigma_2:\;\raisebox{-7mm}{\SixTwo}, \qquad
\Sigma_3:\;\raisebox{-7mm}{\SixThree}$$

\medskip
Denote by $M_x$ the embedding in Figure~\ref{fig-rep-space} with subscript $x$.
The objects $M_3$, $M_{23}$ and $M_{123}$ have socle tableau $\Sigma_3$.
In the representation space, the embeddings isomorphic to
$M_3$ ($M_{23}$, $M_{123}$) form orbits of dimension 4 (3, 2), respectively.
Together, the three orbits form a closed subset, hence an irreducible
component, of $V$.

\medskip
The objects $M_{12}$ and $M_{2\lambda}$ have socle tableau $\Sigma_2$;
each forms an orbit of dimension 3, although there is a one-parameter
family of such orbits.  The closure of the union of those orbits
also contains $M_{23}$ and $M_{123}$.  The remaining embedding $M_1$
has socle tableau $\Sigma_1$, its orbit has dimension 4 and contains
in its closure $M_{12}$ and $M_{123}$.

\medskip
This partition of the representation space $V$ into regions given by
socle tableaux is dual to the partition given by LR-tableaux
(where an object $M_x$ with subscript $x=i*$ belongs to the region
given by the LR-tableau $\Gamma_i$).  This duality is expected as
socle tableaux are determined by the LR-tableaux of the dual embeddings
(Theorem~\ref{thm-lr-soc-hom}).

\subsection{Socle tableaux and tableau switching.}

We have seen in Section~\ref{sec-soc-lr-hom} that there is a one-to-one
correspondence between the sets of socle tableaux of shape $(\alpha,\beta,\gamma)$
and the set of LR-tableaux of shape $(\gamma,\beta,\alpha)$, given by
mapping the socle tableau of an embedding to the LR-tableau of the dual
embedding.

\smallskip
 Thanks to an~anonymous referee we observed that on the combinatorial level,
this correspondence in our examples is given by \boldit{tableau switching,}
see \cite{bss} (or \cite{ty} for an application to Schubert calculus).
We illustrate the tableau switching algorithm in the example from
Section~\ref{example-two}.

\smallskip
Let $S$ be the tableau of shape $\gamma=(31)$ where each box in row $i$
has entry $i$.  To be consistent with the notation in \cite{bss},
let $T$ be the tableau of shape $\beta\setminus\gamma$ where
the entry of a box is $5-i$ if $i$ is in the corresponding box in the socle
tableau $\Sigma_2$ (from Section~\ref{example-two}).

\newcommand\bfnumbox[1]{{\thicklines
  \put(0,0)\smbox%
  \put(0,0){\makebox(\value{boxsize},\value{boxsize})[c]{%
      $\smallentryformat{\bf#1}$}}}}

$$
\Sigma_2:\;
\raisebox{-5mm}\STtwo
\qquad
S:\;
\raisebox{-5mm}{\begin{picture}(9,15)
  \multiput(0,9)(3,0)2{\bfnumbox1}
  \put(0,6){\bfnumbox2}
  \put(0,3){\bfnumbox3}
\end{picture}}
\qquad 
T:\;\raisebox{-5mm}{\begin{picture}(9,15)
    \put(6,12){\numbox1}
    \put(3,9){\numbox2}
    \put(6,9){\numbox3}
    \put(3,6){\numbox4}
    \put(0,3){\numbox3}
    \put(0,0){\numbox4}
\end{picture}}
$$

In $S\cup T$, repeat switching neighboring entries from $S$  (in {\bf bold})
and $T$ such that
the entries from $T$ move up or to the left, and such that after each step,
the entries in $T$ and the entries in $S$ are weakly increasing in each row
and strictly increasing in each column.

$$S\cup T:\;
\raisebox{-5mm}{\begin{picture}(9,15)
    \put(0,12){\bfnumbox1}
    \put(3,12){\bfnumbox1}
    \put(6,12){\numbox1}
    \put(0,9){\bfnumbox2}
    \put(3,9){\numbox2}
    \put(6,9){\numbox3}
    \put(0,6){\bfnumbox3}
    \put(3,6){\numbox4}
    \put(0,3){\numbox3}
    \put(0,0){\numbox4}
\end{picture}}
\;\stackrel{3-3}\longmapsto\;
\raisebox{-5mm}{\begin{picture}(9,15)
    \put(0,12){\bfnumbox1}
    \put(3,12){\bfnumbox1}
    \put(6,12){\numbox1}
    \put(0,9){\bfnumbox2}
    \put(3,9){\numbox2}
    \put(6,9){\numbox3}
    \put(0,6){\numbox3}
    \put(3,6){\numbox4}
    \put(0,3){\bfnumbox3}
    \put(0,0){\numbox4}
\end{picture}}
\;\stackrel{3-4}\longmapsto\;
\raisebox{-5mm}{\begin{picture}(9,15)
    \put(0,12){\bfnumbox1}
    \put(3,12){\bfnumbox1}
    \put(6,12){\numbox1}
    \put(0,9){\bfnumbox2}
    \put(3,9){\numbox2}
    \put(6,9){\numbox3}
    \put(0,6){\numbox3}
    \put(3,6){\numbox4}
    \put(0,3){\numbox4}
    \put(0,0){\bfnumbox3}
\end{picture}}
\;\stackrel{2-2}\longmapsto\;
\raisebox{-5mm}{\begin{picture}(9,15)
    \put(0,12){\bfnumbox1}
    \put(3,12){\bfnumbox1}
    \put(6,12){\numbox1}
    \put(0,9){\numbox2}
    \put(3,9){\bfnumbox2}
    \put(6,9){\numbox3}
    \put(0,6){\numbox3}
    \put(3,6){\numbox4}
    \put(0,3){\numbox4}
    \put(0,0){\bfnumbox3}
\end{picture}}
\;\stackrel{1-1}\longmapsto\;
\raisebox{-5mm}{\begin{picture}(9,15)
    \put(0,12){\bfnumbox1}
    \put(3,12){\numbox1}
    \put(6,12){\bfnumbox1}
    \put(0,9){\numbox2}
    \put(3,9){\bfnumbox2}
    \put(6,9){\numbox3}
    \put(0,6){\numbox3}
    \put(3,6){\numbox4}
    \put(0,3){\numbox4}
    \put(0,0){\bfnumbox3}
\end{picture}}
\;\stackrel{1-1}\longmapsto
$$

$$
\stackrel{1-1}\longmapsto\;
\raisebox{-5mm}{\begin{picture}(9,15)
    \put(0,12){\numbox1}
    \put(3,12){\bfnumbox1}
    \put(6,12){\bfnumbox1}
    \put(0,9){\numbox2}
    \put(3,9){\bfnumbox2}
    \put(6,9){\numbox3}
    \put(0,6){\numbox3}
    \put(3,6){\numbox4}
    \put(0,3){\numbox4}
    \put(0,0){\bfnumbox3}
\end{picture}}
\stackrel{2-3}\longmapsto\;
\raisebox{-5mm}{\begin{picture}(9,15)
    \put(0,12){\numbox1}
    \put(3,12){\bfnumbox1}
    \put(6,12){\bfnumbox1}
    \put(0,9){\numbox2}
    \put(3,9){\numbox3}
    \put(6,9){\bfnumbox2}
    \put(0,6){\numbox3}
    \put(3,6){\numbox4}
    \put(0,3){\numbox4}
    \put(0,0){\bfnumbox3}
\end{picture}}
\stackrel{1-3}\longmapsto\;
\raisebox{-5mm}{\begin{picture}(9,15)
    \put(0,12){\numbox1}
    \put(3,12){\numbox3}
    \put(6,12){\bfnumbox1}
    \put(0,9){\numbox2}
    \put(3,9){\bfnumbox1}
    \put(6,9){\bfnumbox2}
    \put(0,6){\numbox3}
    \put(3,6){\numbox4}
    \put(0,3){\numbox4}
    \put(0,0){\bfnumbox3}
\end{picture}}
\stackrel{1-4}\longmapsto\;
\raisebox{-5mm}{\begin{picture}(9,15)
    \put(0,12){\numbox1}
    \put(3,12){\numbox3}
    \put(6,12){\bfnumbox1}
    \put(0,9){\numbox2}
    \put(3,9){\numbox4}
    \put(6,9){\bfnumbox2}
    \put(0,6){\numbox3}
    \put(3,6){\bfnumbox1}
    \put(0,3){\numbox4}
    \put(0,0){\bfnumbox3}
\end{picture}}
\;:{}^ST\cup S_T
$$

Thus, we do indeed obtain $\Gamma^*_2$ after 8 steps as the union of the empty tableau of shape
$\alpha=(42)$ with $S_T$.

$$
S_T: \;\raisebox{-5mm}{\begin{picture}(9,15)
    \put(6,12){\bfnumbox1}
    \put(6,9){\bfnumbox2}
    \put(3,6){\bfnumbox1}
    \put(0,0){\bfnumbox3}
\end{picture}}
\qquad \Gamma^*_2:\;\raisebox{-5mm}\LRtwoD
$$

\bigskip

  {\bf Conjecture:} The tableau switching algorithm always converts
  the socle tableau of an embedding into the LR-tableau of the dual
  embedding.

\medskip
  {\bf Dedication:} The authors wish to dedicate this paper to the
  memory of Professor Andrzej Skowro\'nski.
  As leader of the representation theory group in Toru\'n,
  Professor Skowro\'nski has consistently supported both authors,
  in particular 
  through conference invitations and travel support.  They are grateful
  for Professor Skowro\'nski's interest in their work and his
  helpful feedback.
  Both are saddened by the untimely loss of a treasured colleague
  and a good friend.


\bigskip
Address of the authors:

\parbox[t]{5.5cm}{\footnotesize\begin{center}
              Faculty of Mathematics\\
              and Computer Science\\
              Nicolaus Copernicus University\\
              ul.\ Chopina 12/18\\
              87-100 Toru\'n, Poland\end{center}}
\parbox[t]{5.5cm}{\footnotesize\begin{center}
              Department of\\
              Mathematical Sciences\\ 
              Florida Atlantic University\\
              777 Glades Road\\
              Boca Raton, Florida 33431\end{center}}

\smallskip \parbox[t]{5.5cm}{\centerline{\footnotesize\tt justus@mat.umk.pl}}
           \parbox[t]{5.5cm}{\centerline{\footnotesize\tt markus@math.fau.edu}}

\end{document}